\documentclass[11pt,twoside]{amsart}
 
\usepackage{anysize} \marginsize{1.3in}{1.3in}{1in}{1in}
\setlength{\marginparwidth}{1in}
\usepackage{amsmath, amsthm, amssymb, color}
\usepackage[usenames,dvipsnames,svgnames,table]{xcolor}
\usepackage{tikz}
\usepackage{pgfplots}
\usepgfplotslibrary{fillbetween}
\pgfdeclarelayer{bg}
\pgfsetlayers{bg,main}
\usepackage{bbm}
\usepackage{stmaryrd}
\usetikzlibrary{shapes,positioning,intersections,quotes,arrows}
\usetikzlibrary{patterns}
\usepackage{graphicx}
\usepackage{caption}
\captionsetup[figure]{font=small}
\usepackage{mathtools}
\usepackage{enumerate}
\usepackage{listings}
\usepackage[all]{xypic}
\usepackage{verbatim}
\usepackage{mathrsfs}
\usepackage[linesnumbered,lined,commentsnumbered]{algorithm2e}
\usepackage{caption}

\definecolor{niceblue}{cmyk}{.93,.95,.2,.07}
\definecolor{nicegray}{cmyk}{0,0,0,.5}
\newcommand{\shiftgray}{nicegray!20}
\newcommand{\modulegray}{nicegray!50}
\newcommand{\goodred}{niceblue!40}

\newcommand{\goodwith}{.6mm}

\usepackage{stmaryrd}
\usepackage[hypertexnames=false,pdftex,backref=page,
pdfpagemode=UseNone,
breaklinks=true,
extension=pdf,
colorlinks=true,
linkcolor=MidnightBlue,
citecolor=blue,
urlcolor=blue,
]{hyperref}

\usepackage[capitalise,noabbrev]{cleveref}

\makeatletter
\newcommand{\addresseshere}{%
  \enddoc@text\let\enddoc@text\relax
}

\synctex=1
\linespread{1.02}
\mathsurround=1.1pt
\thickmuskip=5mu plus 3mu minus 1mu

\usepackage{makecell}
\usepackage{array}
\newcolumntype{?}{!{\vrule width 1pt}}
\newtheorem{theorem}{Theorem}
\newtheorem{proposition}[theorem]{Proposition}
\newtheorem{lemma}[theorem]{Lemma}
\newtheorem{corollary}[theorem]{Corollary}
\theoremstyle{definition}
\newtheorem{caveat}[theorem]{Caveat}
\newtheorem{definition}[theorem]{Definition}
\newtheorem{algo}[theorem]{Algorithm}

\numberwithin{theorem}{section}

\newenvironment{example}
{\pushQED{\qed}\examplex}
{\popQED\endexamplex}

\newenvironment{remark}
{\pushQED{\qed}\remarkx}
{\popQED\endremarkx}

\newcommand{\RR}{\mathbb{R}}

\newcommand{\CC}{\mathbb{C}}

\newcommand{\NN}{\mathbb{N}}
\newcommand{\KK}{\mathbb{K}}

\newcommand{\Rnn}{\mathbf{R}}

\newcommand{\param}{r}

\newcommand{\field}{\KK}

\newcommand{\basis}{\mathcal{B}}
\newcommand{\gen}{\mathfrak{g}}

\DeclareMathOperator{\diff}{d\!}

\DeclareMathOperator{\gr}{gr}
\DeclareMathOperator{\Mod}{Mod}

\DeclareMathOperator{\Tame}{Tame}
\DeclareMathOperator{\Fun}{Fun}

\DeclareMathOperator{\Vect}{Vect}
\DeclareMathOperator{\rank}{\beta_0}

\DeclareMathOperator{\id}{id}
\DeclareMathOperator{\Rec}{Rec}
\DeclareMathOperator{\image}{im}
\DeclareMathOperator{\err}{err}
\DeclareMathOperator{\Loc}{Loc}

\let\OldMarginpar\marginpar
\renewcommand{\marginpar}[1]{\OldMarginpar{\footnotesize#1}}

\title[The shift-dimension of multipersistence modules]{The shift-dimension of multipersistence modules}

\author[W. Chach\'olski]{Wojciech Chach\'olski}
\address{Wojciech Chach\'olski\\Department of Mathematics, KTH Royal Institute of Techno\-lo\-gy\\{Lindstedtsv\"agen~25, 114~28 Stockholm, Sweden}}
\email{wojtek@kth.se}

\author[R. Corbet]{Ren\'{e} Corbet}
\address{Ren\'{e} Corbet\\Department of Mathematics, KTH Royal Institute of Technology\\{Lindstedtsv\"agen~25, 114~28 Stockholm, Sweden}}
\email{corbet@kth.se}

\author[A.-L. Sattelberger]{Anna-Laura Sattelberger} 
\address{Anna-Laura Sattelberger\\Max-Planck-Institut f\"ur Mathematik in den Natur\-wis\-sen\-schaf\-ten\\Inselstra{\ss}e~22, 04103~Leipzig, Germany and Department of Mathematics, KTH Royal Institute of Technology\\Lindstedtsv\"agen~25, 114~28 Stockholm, Sweden {\em (current)}}
\email{alsat@kth.se}

\makeatletter
\@namedef{subjclassname@2020}{\textup{2020} Mathematics Subject Classification}
\makeatother

\subjclass[2020]{55N31, 16W50 (primary), 16G20, 68W30 (secondary)}
\keywords{Topological data analysis, multiparameter persistence, stable invariants, multigraded modules, persistence contours.}

\renewenvironment{thebibliography}[1]{
  \begin{oldthebibliography}{#1}
    \setlength{\itemsep}{0.1em}
    \setlength{\parskip}{0.2em}
}
{
  \end{oldthebibliography}
}

\pgfplotsset{compat=1.18}

\begin{document}
\maketitle
\thispagestyle{empty}

\begin{abstract}
We present the shift-dimension of multipersistence modules and investigate its algebraic properties. This gives rise to a new invariant of multigraded modules over the multivariate polynomial ring arising from the hierarchical stabilization of the zeroth total multigraded Betti number. We give a fast algorithm for the computation of the shift-dimension of interval modules in the bivariate case. We construct multipersistence contours that are parameterized by multivariate functions and hence provide a large class of feature maps for machine learning tasks. 
\end{abstract}

\bigskip 

{\subsubsection*{Acknowledgments.} 
\footnotesize{We are grateful to Mats Boij for insightful discussions. 
W.C. was partially supported by VR, the Wallenberg AI, Autonomous System and Software Program (WASP) funded by Knut and Alice Wallenberg Foundation, and MultipleMS  funded by the European Union under the Horizon 2020 program, grant agreement 733161, and dBRAIN collaborative project at digital futures at KTH.
R.C. got supported by the Brummer \& Partners MathDataLab.
A.-L.S. thanks the Brummer \& Partners MathDataLab for its hospitality during her research stay at KTH Stockholm in 2021.}}

\bigskip

\setcounter{tocdepth}{1}
\tableofcontents

\section{Introduction}
Topological data analysis (TDA) reveals structures of data by topological methods. One main tool is persistent homology. To the data, one associates a filtered simplicial complex. For every natural number $n,$ one then takes the $n$-th homology with coefficients in a field $\field;$  the {\em ($n$-th) persistence module} of the data. 
The persistent homology of a one-parameter filtration is algebraically well understood;
 by a basic structure theorem from algebra, the persistence module is uniquely determined by its barcode \cite{Ghr08} in the discrete setting~\cite{EdelsbrunnerLetscherZomorodian2002,ZomorodianCarlsson2005} as well as in the real setting~\cite{BC20} under mild finiteness assumptions. Furthermore, the barcode representation of a  persistence module  is stable~\cite{stability}.
 From the barcode, one reads the birth and death times of topological features. 
The case of several parameters, i.e., the study of multifiltered simplicial complexes and their homology as introduced by Carlsson and Zomorodian in \cite{CZ09} allows the extraction of finer information from the data. As was shown in~\cite{CSV17}, the homology modules of a multifiltered simplicial complex can be obtained as the homology of a chain complex $F_1 \to F_2 \to F_3$ of free multigraded modules of finite rank by combinatorial techniques. This allows analysis of multipersistence modules using standard computer algebra software.
However, the multiparameter case is algebraically intricate. In contrast to the case of a single parameter, as pointed out in~\cite{CZ09}, the respective moduli space is not zero-dimensional.
Moreover, constructing stable and algorithmic invariants of multipersistence modules is challenging and is currently  an active branch of research in topological data analysis. 

We investigate a stable invariant of multipersistence modules that is based on the hierarchical stabilization of discrete invariants as introduced by
Scolamiero et al. in~\cite{noise} and by G\"{a}fvert and Chach\'{o}lski in~\cite{GC}. This construction 
requires the choice of a distance  between multipersistence modules, and it
turns a discrete invariant into a measurable real-valued function.
The obtained invariant is continuous with respect to the chosen distance 
on multipersistence modules and with respect to the $L_p$ distance on measurable real-valued functions. In this article, we focus on the  invariant given by the zeroth total multigraded Betti number~$\beta_0,$ often called {\em rank} of the multipersistence module in a TDA setup, 
defined as the minimal number of generators.
Its hierarchical stabilization is commonly referred to as ``stable rank'' and was introduced in~\cite{GC}.

For the hierarchical stabilization, we also need to be able to construct distances  on the space of tame persistence modules.
A way to construct a big class of metrics other than the well-known interleaving distance~\cite{lesnick2015theory} and matching distance~\cite{Biasotti2011,KerberLesnickOudot2019} is via so-called {\em persistence contours}. 
Persistence contours are  rather well-studied in the one-parameter case.  In~\cite{CR20}, for example, the authors list various contours and show how classification,
using the first one-persistence homology group,
of point processes in the unit square (sampled with respect to different probability distributions) can be  improved by choosing the right  contour. For the case of several parameters, only a small number of examples of persistence contours is known besides the standard contours. The latter are parameterized by vectors. Persistence contours can be described by simple noise systems~\cite{GC,noise}. Those are closely related to so-called ``superlinear families'' that were introduced independently in~\cite{superlinear}, a  generalization of which can be found in~\cite{BCM20}. Therein, no extensive class of parameterized such objects was given yet, but it is essential for machine learning~tasks to have such a class at one's disposal.

Multigraded modules over the polynomial ring  $\field[x_1,\ldots,x_\param]$ with the standard multigrading
can be considered naturally as multipersistence modules. 
In this article, we investigate the hierarchical stabilization of $\beta_0$ for multipersistence modules obtained in this way with respect to a broader family of contours. The obtained invariant is called {\em shift-dimension.}

The main contribution of this article is twofold. The first main contribution is the realization that the shift-dimension can be constructed and defined entirely algebraically without referring to the geometry of multipersistence modules induced by the choice of a distance. We give an explicit description of the shift-dimension and  investigate some of  its algebraic properties. We thereby establish a connection between topological data analysis and combinatorial commutative algebra.
In general, the computation of the stable rank is algorithmic but NP-hard~\cite{GC}. For the bivariate case, we present a linear-time algorithm for the computation of the shift-dimension of interval modules and therefore for the computation of the shift-dimension of quotients of monomial~ideals.

The second main contribution lies in the construction of multipersistence contours. We present a class of contours that is parameterized not only by vectors, but by multivariate functions. In doing so, we provide a large class of multipersistence~contours.

\medskip \textbf{Related work.} The investigation of invariants of multipersistence modules is currently a prominent topic in TDA. For two parameters, the Hilbert function and the rank invariant of~\cite{CZ09} are implemented in~{\tt RIVET}~\cite{rivet} 
for an interactive data analysis. It has been used in applications such as~\cite{Keller2018}. Furthermore, minimal presentations can be computed efficiently for big data sets using~{\tt RIVET}~\cite{lesnick2019computing} or~{\tt mpfree}~\cite{kerber2021fast}. 
Invariants such as the Hilbert series, associated primes, and local cohomology have been investigated~\cite{HOST17}, as well as the multirank function~\cite{thomasdiss} and the compressed multiplicity~\cite{escolar2016persistence}. 

\medskip \textbf{Outline.}
This article is organized as follows. In~\Cref{section basics}, we review basic notions about multipersistence modules, how to measure distances between them, and the hierarchical stabilization of discrete invariants. We present several new contours for the multiparameter case. \Cref{section shiftdim} \ introduces and studies the shift-dimension of tame persistence modules from an algebraic and combinatorial perspective. In particular, we focus on the investigation of (non-)additivity. We present a linear-time algorithm for the computation of the shift-dimension of interval modules in the bivariate case. In~\Cref{section algebraic}, we study the corresponding invariant of multigraded modules over the multivariate polynomial ring and investigate its algebraic properties. In \Cref{section outlook}, we give an outlook to open problems and future~work.

\section{Measuring distances between multipersistence modules}\label{section basics}
\subsection{Persistence modules}\label{sectionpersmod}
Let $(G,\ast)$ denote either  $(\NN^r,+)$ or $(\mathbb{R}_{\geq 0}^\param,+).$ We consider $G$ as a poset as follows. For $g_1,g_2\in G,$ $g_1\leq g_2$ if this holds true component-wise. Let $\field[G]$ denote the monoid ring of $G$ over the field $\field$ with its natural $G$-grading. The category $\Mod^{\gr}(\field[G])$ of $G$-graded $\field[G]$-modules is isomorphic to the thin category $\Fun \left( G,\text{Vect}_{\field}\right)$ of functors from the posetal category $G$ to the category of $\field$-vector spaces $\text{Vect}_{\field}$ as follows. To each $M=\oplus_{g\in G}M_{g}\in  \text{Ob}(\Mod^{\gr}(\field[G])),$ one associates the functor $(g \mapsto M_g)\in \text{Ob}(\Fun\left( G,\text{Vect}_{\field}\right)).$
Each graded morphism 
\[
\varphi \,=\, (\varphi_g)_{g\in G}\colon \bigoplus_{g\in G}M_{g}\rightarrow\bigoplus_{g\in G}N_{g}
\]
naturally gives rise to a natural transformation $\{\varphi_g \colon M_{g}\rightarrow N_{g}\}_{g\in G}$ of the corresponding functors. The objects of those categories are called {\em (multi-)persistence modules}. We are going to switch seamlessly between both points of view.
We denote~by  $\Tame\left( G,\text{Vect}_{\field}\right)$  the subcategory of  $\Fun\left( G,\text{Vect}_{\field}\right)$ which  corresponds to the subcategory  $\Mod^{\gr}_{\text{f.p.}}(\field[G])$ of $\Mod^{\gr}(\field[G])$ consisting of finitely presented such modules, as investigated in \cite{corbet2018representation}. 
In this article, we often consider {\em interval modules} which are defined as those elements of $\Fun(G,\Vect_{\field})$ that correspond to quotients of two monomial~ideals.  A subset $I\subseteq G$  is called {\em interval} in the poset $(G,\leq) $ if  for any $g,h\in I$ and any $f\in G$ the following two conditions hold: 
\begin{enumerate}[(i)]
\item If $g\leq f \leq h ,$ then $f\in I.$ 
\item There exist $m\in\NN$ and $g_1,\ldots,g_{2m}\in I$ such that $g\leq g_1\geq g_2\leq \cdots \geq g_{2m}\leq h.$
\end{enumerate}
A {\em free} persistence module is a module of the form $F=\oplus_{g\in G } \field(g,\bullet)^{\beta_0(g)},$ where \mbox{$\beta_0(g)\in \NN$} and $\field(g,\bullet)\in \Fun(G,\Vect_{\field})$ denotes the~functor
$$\field(g,h) \, \coloneqq \, \begin{cases}
\field & \text{if } g \leq h,\\
0 & \text{if } g \not\leq h,
\end{cases} \qquad 
\field(g, h\leq f) \, \coloneqq  \, \begin{cases}
\id_\field & \text{if }g\leq h\leq f,\\
0 & \text{otherwise.}
\end{cases}
$$ 
The natural number $\beta_0(F) \coloneqq \sum_{g\in G} \beta_0(g)$ is called the {\em rank} of the free module $F.$

\begin{definition}
Let $M \in \Fun((G,\leq),\Vect_{\field})$ be a persistence module. The {\em total zeroth multigraded Betti number }of $M,$ denoted $\beta_0(M)$ is the smallest possible rank of a free persistence module $F$ such that there exists a surjection from $F$ onto $M.$
\end{definition}
Regarding $M$ as an $\Rnn^r$-graded $\field[\Rnn^r]$-module, the number $\beta_0(M)$ is the minimal number of homogeneous generators of~$M.$
\begin{caveat}
In TDA literature, one often refers to $\beta_0$ as ``rank''. This is not consistent with standard terminology in algebra. Yet, they do coincide for free modules.  
\end{caveat}

\subsection{Distances via persistence contours} It is crucial to be able to measure distances between persistence modules. In order to {\em learn} metrics in the sense of machine learning, 
one needs a large space of such metrics parameterized by simply describable objects.
In this section, we recall a method to construct metrics on the category of tame functors, namely in terms of so-called {\em persistence contours}. Denote by $\Rnn $ the poset $\mathbb{R}_{\geq 0}$ and by $\Rnn^r_{\infty},$ where $r\in \NN,$ the poset obtained from $\Rnn^\param$ by adding a single element $\infty$ such that $x\leq \infty$ for all $x\in \Rnn^\param$ and setting $x+\infty \coloneqq \infty$ for all $x\in \Rnn^r_\infty .$

\begin{definition}\label{asvgadfgsdfg}
A {\em (multi-)persistence contour} $C$ is a functor $C\colon \Rnn^r_\infty \times \Rnn \to \Rnn^r_\infty$ such that for any $x\in \Rnn^r_\infty$ and $\varepsilon,\tau \in \Rnn,$ the following two conditions hold:
\begin{enumerate}[(i)]
	\item $x\leq C(x,\varepsilon)$ and 
	\item $C(C(x,\varepsilon),\tau)\leq C(x,\varepsilon+\tau).$
\end{enumerate}
\end{definition}
The second property in the definition indicates that a persistence contour fulfills  a {\em lax} action only, i.e., the inequality is in general not an equality.
For brevity, one often refers to persistence contours as just ``contours''.
One common example is the {\em standard contour in the direction of a vector $v\in \Rnn^\param$} defined as $C(x,\varepsilon)\coloneqq x+\varepsilon v ;$ i.e., one shifts $x$ by the vector $v$ scaled by $\varepsilon.$
An immediate generalization of that contour is to travel along a curve instead of a line as explained in~\Cref{ex:curvecontour}.

Following an immediate generalization of \cite[Section 3]{CR20}, we now recall how persistence contours give rise to pseudometrics on the category of persistence modules.

\begin{definition}
	Let $C$ be a  persistence contour, $M,N\in \Tame(\Rnn^\param,\Vect_{\field})$ tame persistence modules, and $\varepsilon\in [0,\infty).$ A morphism $f\colon M\to N$ is an {\em $\varepsilon$-equivalence} if for every $a\in [0,\infty)^\param$ s.t.\ $C(a,\varepsilon)\neq\infty,$ there is a $\field$-linear function $N(a) \to M(C(a,\varepsilon))$ making the following diagram commutative:
	$$\begin{tikzpicture}[node distance=3cm, auto]
	\node (Fa) {$M(a)$};
	\node (Ga) [below of=Fa] {$N(a)$};
	\node (FCa) [right of=Fa] {$M(C(a,\varepsilon))$};
	\node (GCa) [right of=Ga] {$N(C(a,\varepsilon))$};
	\draw[->] (Fa) to node [swap] {\tiny $f(a)$} (Ga);
	\draw[->] (Fa) to node {\tiny $M(a\!\leq\! C(a,\varepsilon))$} (FCa);
	\draw[->] (Ga) to node [swap] {\tiny $N(a\!\leq\! C(a,\varepsilon))$} (GCa);
	\draw[->] (FCa) to node {\tiny $f(C(a,\varepsilon))$} (GCa);
	\draw[->,dashed] (Ga) to node {$\exists$} (FCa);
	\end{tikzpicture}$$
\end{definition}

\begin{definition}
Two tame functors $M$ and $N$ are called {\em $\varepsilon$-equivalent} (with respect to $C$) if there is a tame functor $L$ and morphisms $f\colon M \to L \leftarrow N\colon g$ such that $f$ is an $\varepsilon_1$-equivalence, $g$ is an $\varepsilon_2$-equivalence, and $\varepsilon_1+\varepsilon_2\leq \varepsilon.$
\end{definition} 
Denote by $\mathcal{E}\coloneqq \{ \varepsilon \in [0,\infty) \mid M \text{ and } N \text{ are } \varepsilon\text{-equivalent}\}.$ Then the function
$$d_C(M,N) \,\coloneqq \,
\begin{cases}
\infty & \text{if } \mathcal{E}=\emptyset,\\
\inf(\mathcal{E}) & \text{otherwise}
\end{cases} $$
is an extended pseudometric on 
$\Tame(\Rnn^r,\Vect_{\field}).$
There is an alternative way of thinking of contours. For that, we now recall the definition of a noise system from \cite{noise}.
\begin{definition}
A {\em noise system} in $\Tame(\Rnn^r,\Vect_{\field})$ is a collection $\{\mathcal{V}_\varepsilon\}_{\varepsilon \in \Rnn}$ of sets of tame functors such that the following three conditions are satisfied:
	\begin{enumerate}[(i)]
		\item For any $\varepsilon,$ the zero functor is contained in $\mathcal{V}_{\varepsilon},$
		\item for all $0\leq \tau < \varepsilon,$ $\mathcal{V}_\tau$ is contained in $\mathcal{V}_\varepsilon,$ and
		\item if $0 \to M \to L \to N \to 0$ is a short exact sequence in  $\Tame(\Rnn^r,\Vect_{\field}),$ then
		\begin{enumerate}[(1)]
			\item if $L\in \mathcal{V}_\varepsilon,$ then also $ M,N\in \mathcal{V}_\varepsilon$ and
			\item if $M\in \mathcal{V}_\varepsilon$ and $N\in \mathcal{V}_\tau,$ then $L\in \mathcal{V}_{\varepsilon+\tau}.$
		\end{enumerate}
	\end{enumerate}
\end{definition}
Throughout this article, we will mainly focus on noise systems $\{\mathcal{V}_\varepsilon\}$ that are {\em closed under direct sums}, i.e., noise systems such that whenever $M,N\in \mathcal{V}_\varepsilon,$ also their direct sum $M\oplus N$ is in $\mathcal{V}_\varepsilon.$ 
Define $B(M,\tau)$ as 
\[
B(M,\tau)  \,\coloneqq \,  \left\{  U \mid U \text{ is a tame subfunctor of } M \ \text{such that }  M/U \in \mathcal{V}_\tau \right\}.
\]
If $\{\mathcal{V}_\varepsilon\}$ is closed under direct sums, a minimal element in $B(M,\tau)$  is unique if it exists; this element is denoted by $M[\tau]$ and is called the {\em $\tau$-shift} of $M.$ Then
$B(M,\tau)$
coincides with the set 
$$ \left\{ U \mid U\text{ is a tame subfunctor of }M \ \text{such that} \ M[\tau]\subseteq U \subseteq M \right\}.$$    

\begin{definition}[{\cite[Definition 8.2]{GC}}]\label{def simple} 
	A noise system is called {\em simple} if 
	\begin{enumerate}[(i)]
		\item it is closed under direct sums,
		\item for any tame functor $M$ and any $\tau \in \Rnn,$ $B(M,\tau)$ contains the minimal element~$M[\tau],$ and
		\item $\beta_0(M[\tau]) \leq \beta_0(M)$ for any $\tau \in \Rnn.$
	\end{enumerate}
\end{definition} 
The third condition inherently links simple noise systems to~$\beta_0.$

\begin{proposition}[{\cite[Theorem 9.6]{GC}}]
	For a persistence contour $C,$ denote by $\mathcal{V}_{C,\varepsilon}$ the set of tame functors $M$ such that $ M(x\leq C(x,\varepsilon))$ is the zero-morphism whenever $ C(x,\varepsilon ) \neq \infty.$
The function $C\mapsto \{ \mathcal{V}_{C,\varepsilon}\}_\varepsilon$ is a  bijection between the set of persistence contours and the set of simple noise systems.
\end{proposition}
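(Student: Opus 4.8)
The plan is to establish the bijection by constructing the two-sided inverse explicitly and verifying the required properties in both directions. First I would show that for any persistence contour $C$, the collection $\{\mathcal{V}_{C,\varepsilon}\}_\varepsilon$ is indeed a simple noise system. Checking the three noise-system axioms is routine: the zero functor obviously has the zero transition morphism, the monotonicity $\mathcal{V}_{C,\tau}\subseteq\mathcal{V}_{C,\varepsilon}$ for $\tau<\varepsilon$ follows because $C(x,\tau)\leq C(x,\varepsilon)$ (using functoriality of $C$ in the second variable together with property (i) of a contour, which forces $C(x,\tau)\le C(C(x,\tau),\varepsilon-\tau)\le C(x,\varepsilon)$) so that $M(x\le C(x,\varepsilon))$ factors through $M(x\le C(x,\tau))$; and the short-exact-sequence conditions follow by a diagram chase, where the additivity condition (3)(2) crucially uses the lax action $C(C(x,\varepsilon),\tau)\leq C(x,\varepsilon+\tau)$. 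Closure under direct sums is immediate since a direct sum of morphisms is zero iff each summand is. The simplicity conditions (ii) and (iii) then need the explicit identification of $M[\tau]$, which I address next.

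Next I would identify the $\tau$-shift for the noise system $\{\mathcal{V}_{C,\varepsilon}\}$ concretely: the minimal subfunctor $M[\tau]\subseteq M$ with $M/M[\tau]\in\mathcal{V}_{C,\tau}$ is given pointwise by $M[\tau](x)=\image\bigl(M(y\le x)\bigr)$ summed over all $y$ with $C(y,\tau)\ge x$ — equivalently, the subfunctor generated by the images of all transition maps $M(y\le C(y,\tau))$. One checks that $M/M[\tau]$ has vanishing $C$-transition maps and that any $U$ with $M/U\in\mathcal{V}_{C,\tau}$ must contain each such image, hence contains $M[\tau]$; this simultaneously proves existence of the minimal element (condition (ii)) and, since $M[\tau]$ is a quotient-compatible image of $M$'s generators, it gives $\beta_0(M[\tau])\le\beta_0(M)$ (condition (iii)). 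This realizes the map $C\mapsto\{\mathcal{V}_{C,\varepsilon}\}$ as landing in simple noise systems.

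For the reverse direction, given a simple noise system $\{\mathcal{V}_\varepsilon\}$ I would define a contour by $C(x,\varepsilon)\coloneqq\bigwedge\{y\in\Rnn^r_\infty \mid \field(x,\bullet)/\field(y,\bullet)\in\mathcal{V}_\varepsilon\}$, i.e., reading off the shift behaviour on the free module $\field(x,\bullet)$; concretely $\field(x,\bullet)[\varepsilon]=\field(C(x,\varepsilon),\bullet)$ because the shift of a rank-one free functor is again rank-one free by condition (iii) of simplicity. One verifies $C$ is functorial in both arguments (monotonicity in $x$ from functoriality of the construction, monotonicity in $\varepsilon$ from axiom (ii) of noise systems), that $x\le C(x,\varepsilon)$ from axiom (i), and the lax cocycle inequality $C(C(x,\varepsilon),\tau)\le C(x,\varepsilon+\tau)$ from the additivity axiom (iii)(2) applied to the two-step exact sequence $0\to \field(x,\bullet)[\varepsilon]/\field(x,\bullet)[\varepsilon+\tau]\to\field(x,\bullet)/\field(x,\bullet)[\varepsilon+\tau]\to\field(x,\bullet)/\field(x,\bullet)[\varepsilon]\to 0$. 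Finally I would check the two composites are identities: starting from $C$, the free-module recipe recovers $C$ on the nose since $\mathcal{V}_{C,\varepsilon}$ by definition makes $\field(x,\bullet)\to\field(C(x,\varepsilon),\bullet)$ zero and nothing smaller; starting from $\{\mathcal{V}_\varepsilon\}$, one shows $M\in\mathcal{V}_\varepsilon$ iff all $C$-transition maps vanish, using that $M$ is generated by images of free modules and simplicity propagates the shift. The main obstacle I expect is this last surjectivity/injectivity verification — specifically showing that a simple noise system is completely determined by its action on free rank-one functors, which requires carefully using conditions (3)(1) and (3)(2) together with finite presentability to reduce an arbitrary tame $M$ to its free cover, and ensuring the induced contour's noise system is not strictly smaller than the one we started with.
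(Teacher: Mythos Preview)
The paper does not supply its own proof of this proposition: it is stated with the attribution \cite[Theorem~9.6]{GC} and no argument follows. There is therefore no in-paper proof to compare your proposal against.

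For what it is worth, your outline follows the natural strategy one would expect for such a bijection result (and is in the spirit of the cited reference): verify that $\{\mathcal{V}_{C,\varepsilon}\}$ is a simple noise system, build the inverse by reading off the shift of the rank-one free functors $\field(x,\bullet)$, and then check the two composites are the identity. The step you flag as the main obstacle---showing that a simple noise system is determined by its effect on rank-one free modules---is indeed the crux, and your suggestion to reduce via a free cover using axioms (iii)(1) and (iii)(2) together with the $\beta_0$-bound in the definition of \emph{simple} is the right lever. One point to be careful about: your proposed formula $C(x,\varepsilon)=\bigwedge\{y\mid \field(x,\bullet)/\field(y,\bullet)\in\mathcal{V}_\varepsilon\}$ presupposes that the meet is attained and that $\field(x,\bullet)[\varepsilon]$ is again of the form $\field(y,\bullet)$; this is exactly what condition~(iii) of Definition~\ref{def simple} (that $\beta_0(M[\tau])\le\beta_0(M)$) is designed to guarantee, so you should invoke it explicitly there rather than only later.
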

Under this correspondence, for the standard contour in the direction of a vector $v,$ $M[\tau\cdot v]$ is $ (\tau\cdot v) \ast M,$ where $\ast$ denotes the induced action of the monoid $\Rnn^r$ on the monoid ring $\field[\Rnn^r]$.

\subsection{Construction of new contours}
In the setting of a single parameter, there are various contours suitable for concrete data-analytic tasks~\cite{CR20}. In this subsection, we construct several contours for the multiparameter case.

\begin{example}[Curve contour]\label{ex:curvecontour}
Consider a curve in $\mathbb{R}^\param$ that is given by a monotone non-decreasing function $I\colon \Rnn \to\mathbb{R}^\param$ for which $I(0)$ lies in one of the coordinate axes. Assume further that $\Rnn^\param$ can be covered by translations of that curve parallel to $r-1$ coordinate axes such that every point in $\Rnn^\param$ lies on precisely one of the curves.
For $x \in \Rnn^r_\infty$ and $\varepsilon \in [0,\infty),$ define $C(x,\varepsilon)$ to be the point that one obtains by traveling along the translated curve that runs $\varepsilon$-far through $x$; i.e., if $I_x$ denotes the translated curve running through $x$ at 
$\varepsilon_0,$ we set $C(x,\varepsilon)$ to be  $I_x(\varepsilon_0+\varepsilon).$
\end{example}
For some further examples of persistence contours in the one-parameter case, we refer to~\cite{CR20}. Among them is the contour of 
{\em distance type}. Fixing a vector, we can immediately generalize this contour to the multiparameter setting.

\begin{example}[Distance type in a fixed direction]\label{ex:disttype}
	Let $r\in \mathbb{N},$ $x,v\in \Rnn^r,$ and $f$ a non-negative (Lebesgue-)measurable function on $\Rnn^\param.$
	For $x\in \Rnn^r_\infty$ and $\varepsilon \in [0,\infty),$ we set $C(x,\varepsilon) \coloneqq x+\delta(x,\varepsilon)v 	$ where $\delta(x,\varepsilon) \in \mathbb{R}_{\geq 0}$ is such that
	\begin{align*}\label{everyepsilon}
	\varepsilon \,=\, \int_{L_x^{x+\delta(x,\varepsilon)v}}f \diff \mathcal{L}^r.
	\end{align*}
	 If no such $\delta(x,\varepsilon)$ exists, we set  $C(x,\varepsilon) \coloneqq \infty.$
	Here, $L_x^{x+\delta(x,\varepsilon)v}$ denotes the set 
	\[  L_x^{x+tv}  \,\coloneqq \, \left\{y\in\Rnn^r\mid y\geq x, \, y\ngeq x+\delta(x,\varepsilon)v \right\}, \] 
	to which we refer to as {\em L-shape (at $x$)}.
\end{example}
\begin{remark}
	Note that the L-shapes in~\Cref{ex:disttype} 
	can not be replaced by the rectangles of the same bounds; it would not define a lax action anymore.
\end{remark}

Applying a one-parameter contour component-wise, as in the following example, yields a multiparameter contour.
\begin{example}[Component-wise shift type]
	Let $r\in \mathbb{N},$ $f_1,\ldots,f_r\colon [0,\infty) \to [0,\infty )$ 
	be  non-negative measurable functions and $x\in \Rnn^r.$ For each $x_i,$ set $y_i$ to be the infimum of all $\tilde{y_i}\in \Rnn$ such that $x_i=\int_{0}^{\tilde{y_i}}f_i \diff \mathcal{L}^1,$ if such $\tilde{y_i}$ exists. Let $m_1,\ldots,m_r$ be super-additive non-negative functions. One obtains a contour by setting
	$$  C(x,\varepsilon) \, \coloneqq \,\begin{pmatrix}
	\int_{0}^{y_1+m_1(\varepsilon)}f_1 \diff \mathcal{L}^1\\
	\vdots\\
		\int_{0}^{y_r+m_r(\varepsilon)}f_r \diff \mathcal{L}^1
	\end{pmatrix} $$
if such $y_1,\ldots,y_r$ exist, and $C(x,\varepsilon)\coloneqq \infty$ otherwise.
\end{example}
We now introduce a generalization of the contour of shift type that is based on multivariate functions. This yields a large class of persistence contours.
\begin{example}[Multivariate shift type] \label{ex:multishiftype}
Let	$f_1,\ldots,f_r\colon [0,\infty)^r\rightarrow[0,\infty)$ be non-negative measurable  
 functions.
For $y\in \Rnn^r,$ let $\Rec(y)$ denote $[0,y_1]\times \cdots \times [0,y_r]\subseteq \Rnn^r.$
For $x\in [0,\infty)^\param$ define
$$
	B(x) \, \coloneqq \, \left\{y\mid \int_{\Rec(y)}f_1\diff \mathcal{L}^r \geq x_1,\ldots, \int_{\Rec(y)} f_r \diff\mathcal{L}^r\geq x_r\right\} 
$$ and
$	b(x) \,\coloneqq \, \inf B(x),$
	where $\inf$ denotes the meet in the poset. 
	Let $v\in\Rnn^r.$ 
	One obtains a contour by setting
	\[
	C(x,\varepsilon) \, \coloneqq  \,\sup \left( \left(\int_{\Rec(b(x)+\varepsilon v)}f_1\diff \mathcal{L}^r \, ,  \ldots, \, \int_{\Rec(b(x)+\varepsilon v)}f_r \diff\mathcal{L}^r\right), x \right)
	\]
	where $\sup$ denotes the join of the poset. 
	If~$B(x)$ is empty, set $C(x,\varepsilon)\coloneqq \infty.$ 
To verify that $C$ is a contour, one can observe that, for every $x$ and $\varepsilon$, the element $b(x)+ \varepsilon v$ belongs to $B(C(x, \varepsilon))$.  Consequently,  
 $b(C(x, \varepsilon))\leq b(x)+ \varepsilon$, and hence
 \[\int_{\Rec(C(x, \varepsilon)) + \tau v}f_i \, \diff\mathcal{L}^r \,\leq \,
\int_{\Rec(b(x)+\varepsilon v + \tau v)}f_i\, \diff\mathcal{L}^r \]
 for every $i$. This inequality implies that condition (ii) of \Cref{asvgadfgsdfg} is fulfilled.
	
The computational bottleneck is to compute~$b(x).$ This, however, can be done efficiently by an approximation scheme to arbitrary precision by studying the intersection of $B(x)$ with an equidistant grid and successively decreasing the grid length. 
\end{example}

\subsection{Hierarchical stabilization}\label{section hierarchical}
In this section, we recall from \cite{CR20,GC} the concept  and properties of the {\em hierarchical stabilization}. This process turns discrete into stable invariants as follows.
Let $T$ be a set and $d\colon T \times T \to \Rnn \cup \infty$ an extended pseudometric on $T,$ where again $\Rnn$ denotes the poset of non-negative real numbers. Let $f\colon T \to \NN$ be a discrete invariant and denote by $\mathcal{M}$ the set of Lebesgue-measurable functions from $[0,\infty) $ to $[0,\infty),$ endowed with the interleaving distance as in~\cite[Section 3]{GC}.

\begin{definition} Fix an extended pseudometric $d$ on $T.$
	The {\em hierarchical stabilization of $f$}, denoted~$\hat{f},$  is defined to be
	$$\hat{f}\colon  T \to \mathcal{M},\quad  x \mapsto \left( \tau \mapsto \min \left\{ f(y)\mid d(x,y)\leq \tau \right\} \right). $$
\end{definition}
Hence, $\hat{f}(x)\in \mathcal{M}$ is the monotone, non-increasing measurable function that maps a real number $\tau$ to the minimal value that the discrete invariant $f$ takes on a closed $\tau$-neighborhood of~$x.$ Note that $\hat{f}$ strongly depends on the chosen pseudometric on~$T.$
\begin{proposition}{{\cite[Proposition 2.2]{CR20}}} 
	For any choice of extended pseudometric on~$T,$ the function
	$\hat{f}\colon T \to \mathcal{M}$ is $1$-Lipschitz.
\end{proposition}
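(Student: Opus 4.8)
The plan is to unwind the definition of the interleaving distance on $\mathcal{M}$ and reduce the claim to the triangle inequality for the pseudometric $d$ on $T$. Recall that two monotone non-increasing measurable functions $g,h\colon[0,\infty)\to[0,\infty)$ are $\delta$-interleaved when $g(\tau+\delta)\leq h(\tau)$ and $h(\tau+\delta)\leq g(\tau)$ for all $\tau\in[0,\infty)$, and that the interleaving distance is the infimum of those $\delta\geq 0$ for which this holds. Hence it suffices to prove: for all $x,x'\in T$ with $\delta\coloneqq d(x,x')<\infty$, the functions $\hat{f}(x)$ and $\hat{f}(x')$ are $\delta$-interleaved; when $d(x,x')=\infty$ there is nothing to show.

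First I would observe that for every $\tau\in[0,\infty)$ the set $\{y\in T\mid d(x,y)\leq\tau\}$ is nonempty, since it contains $x$; thus the minimum in the definition of $\hat{f}(x)(\tau)$ is taken over a nonempty subset of $\NN$ and is genuinely attained. Next, by the triangle inequality, if $d(x',y)\leq\tau$ then $d(x,y)\leq d(x,x')+d(x',y)\leq\delta+\tau$, so $\{y\mid d(x',y)\leq\tau\}\subseteq\{y\mid d(x,y)\leq\tau+\delta\}$. Taking the minimum of $f$ over the larger set can only decrease the value, which gives $\hat{f}(x)(\tau+\delta)\leq\hat{f}(x')(\tau)$ for all $\tau$. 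Interchanging the roles of $x$ and $x'$ yields $\hat{f}(x')(\tau+\delta)\leq\hat{f}(x)(\tau)$. These are precisely the two $\delta$-interleaving inequalities, so the interleaving distance between $\hat{f}(x)$ and $\hat{f}(x')$ is at most $\delta=d(x,x')$, i.e.\ $\hat{f}$ is $1$-Lipschitz.

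I expect no genuine obstacle here; the whole proof is powered by the triangle inequality together with the monotonicity of the ``$\min$ over a neighborhood'' operation under inclusion of neighborhoods. The one point that deserves care is matching the two componentwise inequalities above with the exact formulation of the interleaving distance on $\mathcal{M}$ used in \cite[Section 3]{GC}, and in particular confirming that $\hat{f}(x)$ really lies in $\mathcal{M}$, so that this distance applies. The latter is immediate: if $\tau\leq\tau'$ then $\{y\mid d(x,y)\leq\tau\}\subseteq\{y\mid d(x,y)\leq\tau'\}$, whence $\hat{f}(x)(\tau)\geq\hat{f}(x)(\tau')$, so $\hat{f}(x)$ is monotone non-increasing; being monotone and $\NN$-valued it is a countably-valued step function, hence Lebesgue-measurable, and therefore an element of $\mathcal{M}$.
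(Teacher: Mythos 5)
Your argument is correct: the paper itself gives no proof of this statement (it is quoted from \cite[Proposition 2.2]{CR20}), and your reduction via the triangle inequality --- the ball $\{y \mid d(x',y)\leq\tau\}$ sits inside $\{y\mid d(x,y)\leq\tau+\delta\}$, so the minima satisfy the two $\delta$-interleaving inequalities --- is exactly the standard argument behind that cited result. The side remarks (non-emptiness of the balls so the minimum over a subset of $\NN$ is attained, and monotonicity/measurability so that $\hat{f}(x)\in\mathcal{M}$) are the right points to check and are handled correctly.
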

Therefore, the hierarchical stabilization turns discrete into stable invariants. 

\section{The shift-dimension of multipersistence modules}\label{section shiftdim}
In this section, we introduce the shift-dimension of  persistence modules and investigate its algebraic properties. We particularly focus on (non-)additivity.

\subsection{The shift-dimension and its origin}
\begin{definition}\label{def gdim}
Let $v\in\Rnn^\param,$ $M$ an $\Rnn^\param$-graded $\field[\Rnn^\param]$-module. Elements $m_1,\ldots,m_k$ of $M$ {\em $v$-generate $M$} if 
\[
v\ast \left( M / \langle m_1,\ldots,m_k \rangle \right) \, = \, 0.
\]  
A {\em $v$-basis of $M$} is a set of $v$-generators $m_1,\ldots,m_k$ such that $k$ is smallest possible. In this case, we call $k$ the {\em shift-dimension} (or {\em $v$-dimension}) of $M$ and denote it~$\dim_v (M).$
\end{definition}
Throughout the article, we are going to stick to {\em homogeneous} elements when studying generators and $v$-bases. The following lemma justifies that convention.
 \begin{lemma}
	If $M$ is a tame persistence module, then for all $v\in \Rnn^\param$ there exists a $v$-basis consisting of finitely many homogeneous generators. 
\end{lemma}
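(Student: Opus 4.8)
The plan is to reduce the existence of a finite homogeneous $v$-basis to the finite generation of a suitable quotient module, and then use that quotient's minimal homogeneous generators together with a finite set of "witnesses" to build the basis. First I would recall that since $M$ is tame, it corresponds to a finitely presented $\field[\Rnn^\param]$-module $\Mod^{\gr}_{\text{f.p.}}(\field[\Rnn^\param])$; in particular $M$ is finitely generated, say by homogeneous elements $n_1,\ldots,n_s$. Next I would observe that the submodule $v \ast M \subseteq M$ (the image of the action by $v$) is itself a homogeneous submodule, and that the quotient $M/(v\ast M)$ is again tame (a quotient of a finitely presented module by a homogeneous submodule that is the image of an endomorphism; one should check this is finitely presented, which follows since $v\ast M$ is a finitely generated submodule of the Noetherian-type structure here, i.e.\ the module is coherent by tameness). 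Hence $M/(v\ast M)$ has a minimal finite set of homogeneous generators; lift these to homogeneous elements $m_1,\ldots,m_k \in M$.

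The key step is to verify that $m_1,\ldots,m_k$ actually $v$-generate $M$, i.e.\ that $v \ast (M/\langle m_1,\ldots,m_k\rangle) = 0$. Set $N \coloneqq \langle m_1,\ldots,m_k\rangle$. By construction $N + (v\ast M) = M$, since the $m_i$ surject onto $M/(v\ast M)$. Now I would argue, for an arbitrary homogeneous $x \in M$ at grade $g$, that $(v \ast x) \in N$: write $x = y + z$ with $y \in N$ homogeneous of grade $g$ and $z \in v\ast M$ homogeneous of grade $g$. The element $z$ being in $v\ast M$ means, grade by grade, that the structure map $M(h \le g)$ for $h = g - v$ (in the $\Rnn^\param$-graded picture, the map $M_{g-v} \to M_g$ induced by multiplication by the monomial corresponding to $v$) hits $z$. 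Then $v \ast z = M(g \le g+v)(z)$ lies in the image of $M_{g-v} \to M_{g+v}$; one then checks this composite already factors through the part of $M$ that maps into $N$, or more cleanly: $v \ast z \in v \ast (v \ast M)$, and by a straightforward induction on the "depth" one reduces to the base case. The cleanest route is actually to note $v\ast(M/N) \cong (v\ast M + N)/N = M/N$ is not obviously zero, so instead I would directly show $v \ast (M/(N+(v\ast M)^{\le})) $ vanishes by the lax/semigroup bookkeeping — this is where care is needed.

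The main obstacle I anticipate is precisely this bookkeeping step: showing that $N + (v\ast M) = M$ upgrades to $v\ast(M/N)=0$ rather than merely $v\ast(M/N) = v\ast(M/N)$. The resolution should be that for homogeneous $x$ of grade $g$, the decomposition $x = y+z$ forces $v\ast x = v\ast y + v\ast z$ with $v\ast y \in N$ automatically, and $v\ast z$ lies in the image of the grade map from $M_{g-v}$; iterating the decomposition at grade $g-v$ and using that the grades $g, g-v, g-2v,\ldots$ eventually leave the support of the finitely generated module $M$ (tameness guarantees $M$ is "bounded below" in the relevant sense, or one localizes to a grid and uses that the generators of $M$ sit at finitely many grades), the tail $v\ast z$ telescopes into $N$ after finitely many steps. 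Thus $v\ast(M/N)=0$, so $m_1,\ldots,m_k$ $v$-generate $M$, they are homogeneous and finite in number, and any $v$-basis (being of minimal cardinality among $v$-generating sets) is then also finite. Finally, a short argument shows a $v$-basis may be taken homogeneous: given any finite $v$-basis, replacing each element by its homogeneous components and invoking minimality together with the graded Nakayama-type lemma for $\Mod^{\gr}_{\text{f.p.}}(\field[\Rnn^\param])$ yields a homogeneous one of the same size. This completes the proof.
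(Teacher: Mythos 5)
Your construction ultimately rests on the same fact as the paper's one-line proof, namely that tameness means $M$ is finitely presented and hence admits a finite homogeneous generating set, but the detour through $M/(v\ast M)$ adds nothing and is where your write-up wobbles. Your telescoping argument (decompose a homogeneous $x$ at grade $g$, then iterate at $g-v$, $g-2v,\dots$, which eventually leave $\Rnn^\param$ when $v\neq 0$, so the tail dies because $M$ is supported above finitely many generator degrees) is precisely graded Nakayama for the principal ideal generated by the degree-$v$ monomial: it shows $N=M$, i.e.\ your lifted elements are simply a generating set of $M$ (indeed $\beta_0(M/(v\ast M))=\beta_0(M)$ for $v\neq 0$). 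So the quotient construction buys nothing over taking a finite homogeneous generating set of $M$ itself, which $v$-generates $M$ trivially since $M/\langle\text{generators}\rangle=0$; that observation is the paper's entire proof. The middle paragraph of your proposal, where you note that $v\ast(M/N)=(v\ast M+N)/N=M/N$ is ``not obviously zero'' and appeal to unexplained ``lax/semigroup bookkeeping'' for an undefined object $(v\ast M)^{\le}$, should be discarded in favor of the Nakayama argument you give at the end; note also that your construction degenerates at $v=0$, where the lifted set is empty and does not $0$-generate $M$, although in that case the statement is exactly finite homogeneous generation and needs no shift at all.

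The genuine gap is your final step. The lemma asserts the existence of a $v$-basis, i.e.\ a $v$-generating set of \emph{smallest possible cardinality}, consisting of homogeneous elements. You propose to take an arbitrary finite $v$-basis, replace each element by its homogeneous components, and ``invoke minimality together with a graded Nakayama-type lemma.'' Replacing elements by their homogeneous components enlarges the set, so this does not yield a homogeneous $v$-generating set of the same size, and neither minimality nor Nakayama supplies the missing selection of $k$ homogeneous elements; as written this step does not go through. The paper instead deduces the lemma directly from the fact that every finitely presented $\Rnn^\param$-graded $\field[\Rnn^\param]$-module is minimally generated by finitely many homogeneous elements, which is what makes the finiteness and the restriction to homogeneous candidates immediate; if you want to argue the homogenization of a minimal $v$-generating set explicitly, you need an actual argument rather than the sketched reduction.
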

\begin{proof}
	This statement immediately follows from the fact that every finitely presented $\Rnn^r$-graded $\field[\Rnn^r]$-module can be minimally generated by homogeneous elements.
\end{proof}

The shift-dimension can be equally characterized as the smallest number of elements of $M$ such that $v\ast M$ is contained in the submodule generated by those elements. 
\begin{remark}
The $0$-dimension of $M$ is the minimal number of  generators of~$M.$ For a fixed vector $v,$ one may consider the family $\{\dim_{\tau v}(M) \}_{\tau\in\RR_{\geq 0}}$ of shift-dimensions of a multipersistence module $M.$ The non-increasing function $\RR_{\geq 0}\to \NN$ mapping $\tau \mapsto \dim_{\tau v}(M)$  is suitable as a feature map for machine learning~algorithms. 
\end{remark}
In general, $\Rnn^\param$-graded $\field[\Rnn^\param]$-modules are hard to handle. We will mostly restrict to the category of {\em finitely presented} $\Rnn^\param$-graded $\field[\Rnn^\param]$-modules and thereby to the category $\Tame\left( \Rnn^\param,\text{Vect}_{\field}\right)$ of {\em tame} functors.

\begin{example}[An indecomposable]\label{ex:indecomposable}
Consider the following commutative diagram of vector spaces and linear maps: 
\[	\begin{tikzpicture}[node distance=1.2cm, auto]
	\newcommand{\scalenumber}{.85}
	\node (00) {};
	\node (01) [above of=00] {};
	\node (02) [above of=01] {};
	\node (03) [above of=02] {};
	\node (04) [above of=03] {$\field$};
	\node (05) [above of=04] {$\field$};
	\node (06) [above of=05] {$\field$};
	\node (10) [right of=00] {};
	\node (20) [right of=10] {};
	\node (30) [right of=20] {};
	\node (40) [right of=30] {};
	\node (50) [right of=40] {};
	\node (60) [right of=50] {};
	\node (70) [right of=60] {};
	\node (80) [right of=70] {$\field$};
	\node (90) [right of=80] {$\field$};
	\node (100) [right of=90] {$\field$};
	\node (110) [right of=100] {$\field$};
	\node (11) [right of=01] {};
	\node (21) [right of=11] {};
	\node (31) [right of=21] {};
	\node (41) [right of=31] {};
	\node (51) [right of=41] {};
	\node (61) [right of=51] {$\field$};
	\node (71) [right of=61] {$\field$};
	\node (81) [right of=71] {$\field^2$};
	\node (91) [right of=81] {$\field$};
	\node (101) [right of=91] {$\field$};
	\node (111) [right of=101] {$\field$};
	\node (12) [right of=02] {};
	\node (22) [right of=12] {};
	\node (32) [right of=22] {};
	\node (42) [right of=32] {$\field$};
	\node (52) [right of=42] {$\field$};
	\node (62) [right of=52] {$\field^2$};
	\node (72) [right of=62] {$\field^2$};
	\node (82) [right of=72] {$\field^2$};
	\node (92) [right of=82] {$\field$};
	\node (102) [right of=92] {$\field$};
	\node (112) [right of=102] {$\field$};
	\node (13) [right of=03] {};
	\node (23) [right of=13] {};
	\node (33) [right of=23] {};
	\node (43) [right of=33] {$\field$};
	\node (53) [right of=43] {$0$};
	\node (63) [right of=53] {$\field$};
	\node (73) [right of=63] {};
	\node (83) [right of=73] {};
	\node (93) [right of=83] {};
	\node (103) [right of=93] {};
	\node (14) [right of=04] {$\field$};
	\node (24) [right of=14] {$\field^2$};
	\node (34) [right of=24] {$\field^2$};
	\node (44) [right of=34] {$\field^2$};
	\node (54) [right of=44] {$\field$};
	\node (64) [right of=54] {$\field$};
	\node (74) [right of=64] {};
	\node (84) [right of=74] {};
	\node (94) [right of=84] {};
	\node (104) [right of=94] {};
	\node (15) [right of=05] {$\field$};
	\node (25) [right of=15] {$\field^2$};
	\node (35) [right of=25] {$\field$};
	\node (45) [right of=35] {$\field$};
	\node (55) [right of=45] {};
	\node (65) [right of=55] {};
	\node (75) [right of=65] {};
	\node (85) [right of=75] {};
	\node (95) [right of=85] {};
	\node (105) [right of=95] {};
	\node (16) [right of=06] {$\field$};
	\node (26) [right of=16] {$\field$};
	\node (36) [right of=26] {};
	\node (46) [right of=36] {};
	\node (56) [right of=46] {};
	\node (66) [right of=56] {};
	\node (76) [right of=66] {};
	\node (86) [right of=76] {};
	\node (96) [right of=86] {};
	\node (106) [right of=96] {};
	\draw[->] (80) to node {} (90);
	\draw[->] (80) to node {\scalebox{\scalenumber}{\tiny ${\begin{pmatrix} 1\\ 1 \end{pmatrix}}$}} (81);
	\draw[->] (90) to node {} (100);
	\draw[->] (90) to node {} (91);
	\draw[->] (100) to node {} (101);
	\draw[->] (61) to node {} (71);
	\draw[->] (61) to node {\scalebox{\scalenumber}{\tiny ${\begin{pmatrix} 0 \\ 1 \end{pmatrix}}$}} (62);
	\draw[->] (71) to node {\scalebox{\scalenumber}{\tiny ${\begin{pmatrix} 0 \\ 1 \end{pmatrix}}$}} (81);
	\draw[->] (71) to node {\scalebox{\scalenumber}{\tiny ${\begin{pmatrix} 0 \\ 1 \end{pmatrix}}$}} (72);
	\draw[->] (81) to node {\scalebox{\scalenumber}{\tiny ${\begin{pmatrix} 0 \; 1 \end{pmatrix}}$}} (91);
	\draw[->] (81) to node {} (82);
	\draw[->] (91) to node {} (101);
	\draw[->] (91) to node {} (92);
	\draw[->] (101) to node {} (102);
	\draw[->] (42) to node {} (52);
	\draw[->] (42) to node {} (43);
	\draw[->] (52) to node {\scalebox{\scalenumber}{\tiny ${\begin{pmatrix} 1 \\ 0 \end{pmatrix}}$}} (62);
	\draw[->] (52) to node {} (53);
	\draw[->] (62) to node {} (72);
	\draw[->,right] (62) to node {\scalebox{\scalenumber}{\tiny ${\begin{pmatrix} 0 \; 1 \end{pmatrix}}$}} (63);
	\draw[->] (72) to node {} (82);
	\draw[->] (82) to node {\scalebox{\scalenumber}{\tiny ${\begin{pmatrix} 0 \; 1 \end{pmatrix}}$}} (92);
	\draw[->] (92) to node {} (102);
	\draw[->] (43) to node {} (53);
	\draw[->] (43) to node {\scalebox{\scalenumber}{\tiny ${\begin{pmatrix} 1 \\ 0 \end{pmatrix}}$}}  (44);
	\draw[->] (53) to node {} (63);
	\draw[->] (53) to node {} (54);
	\draw[->] (63) to node {} (64);
	\draw[->] (04) to node {} (05);
	\draw[->] (04) to node {} (14);
	\draw[->] (14) to node {\scalebox{\scalenumber}{\tiny ${\begin{pmatrix} 1 \\ 0 \end{pmatrix}}$}}  (24);
	\draw[->] (14) to node {} (15);
	\draw[->] (24) to node {} (34);
	\draw[->] (24) to node {} (25);
	\draw[->] (34) to node {} (44);
	\draw[->] (34) to node {\scalebox{\scalenumber}{\tiny ${\begin{pmatrix} 1 \; 0 \end{pmatrix}}$}}  (35);
	\draw[->] (44) to node {\scalebox{\scalenumber}{\tiny ${\begin{pmatrix} 0 \; 1 \end{pmatrix}}$}} (54);
	\draw[->] (44) to node {\scalebox{\scalenumber}{\tiny ${\begin{pmatrix} 1 \; 0 \end{pmatrix}}$}} (45);
	\draw[->] (54) to node {} (64);
	\draw[->] (05) to node {} (06);
	\draw[->] (05) to node {} (15);
	\draw[->] (15) to node {\scalebox{\scalenumber}{\tiny ${\begin{pmatrix} 1 \\ 0 \end{pmatrix}}$}} (25);
	\draw[->] (15) to node {} (16);
	\draw[->] (25) to node {\scalebox{\scalenumber}{\tiny ${\begin{pmatrix} 1 \; 0 \end{pmatrix}}$}} (35);
	\draw[->,right] (25) to node {\scalebox{\scalenumber}{\tiny ${\begin{pmatrix} 1 \; 1 \end{pmatrix}}$}} (26);
	\draw[->] (35) to node {} (45);
	\draw[->] (06) to node {} (16);
	\draw[->] (16) to node {} (26);
	\draw[->] (111) to node {} (112);
	\draw[->] (110) to node {} (111);
	\draw[->] (100) to node {} (110);
	\draw[->] (102) to node {} (112);
	\draw[->] (101) to node {} (111);
	\end{tikzpicture}
\]
This example is taken from~\cite{buchet2020every} and slightly modified. Maps between identical vector spaces are defined to be the identity map. We extend this representation to a two-parameter persistence module $M$ by reading the vertices of the quiver among $\{(i,j)\}_{i=0,\ldots,11,\, j=0,\ldots, 6}$ as depicted above,
by defining $M_{(i,j)}$ to be the trivial vector space for all remaining points of $\mathbb{N}^2,$ and to have identity maps $\id \colon (i,j)\to (i+\varepsilon_1,j+\varepsilon_2)$ for all $i,j\in\NN,$ $\varepsilon_1,\varepsilon_2\in[0,1).$ This persistence module is {\em indecomposable}, i.e., it is not the direct sum of any two non-trivial persistence modules. Let $v=(2,1).$ Then $\dim_0(M)=\beta_0(M)=5,$ $\dim_v(M)=2,$ and a $v$-basis is given by generators of the vector spaces at degrees $(0,4)$ and $(6,1).$  
\end{example}

At first glance, the notion of the shift-dimension might seem rather artificial. But, in fact, it arises in a natural way; namely as the hierarchical stabilization of $\beta_0.$
\begin{proposition}\label{prop stab nrank} 
	Let $M\in\Tame(\Rnn^\param,\Vect_{\field})$ be a tame persistence module.
	Denote by $\widehat{\rank_w}$ the hierarchical stabilization of the zeroth total multigraded Betti number w.r.t. the standard noise in the direction of $0\neq w\in\Rnn^\param$ and $v=\frac{w}{\|w\|}$ its normalization.~Then
	\[ \widehat{\rank_v}(M) (\delta) \,=\, \dim_{\delta v}(M). \]  
\end{proposition}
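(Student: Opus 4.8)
The plan is to unwind the definition of hierarchical stabilization and reduce the claimed equality to a comparison of two minima, one geometric (over modules near $M$) and one algebraic (over submodules of $M$). By \cite[Theorem 9.6]{GC} the standard noise in the direction of $v$ corresponds to the standard contour $C_v(x,\varepsilon)\coloneqq x+\varepsilon v$, so by definition of $\widehat{\,\cdot\,}$,
$$\widehat{\rank_v}(M)(\delta)\;=\;\min\bigl\{\,\beta_0(N)\ \bigm|\ N\in\Tame(\Rnn^\param,\Vect_{\field}),\ d_{C_v}(M,N)\le\delta\,\bigr\}.$$
On the algebraic side, recall from the remark following \cite[Theorem 9.6]{GC} that the $\delta$-shift attached to this noise is $M[\delta v]=(\delta v)\ast M$ and that $B(M,\delta)=\{U\mid M[\delta v]\subseteq U\subseteq M\}$. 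First I would record the elementary identity $\dim_{\delta v}(M)=\min\{\beta_0(U)\mid U\in B(M,\delta)\}$: for ``$\ge$'' the submodule generated by any $\delta v$-basis lies in $B(M,\delta)$ and has exactly $\dim_{\delta v}(M)$ generators, and for ``$\le$'' a minimal generating set of any $U\in B(M,\delta)$ is a set of $\delta v$-generators of $M$ since $(\delta v)\ast(M/U)=0$. It then suffices to prove that the two displayed minima agree.

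For the inequality $\widehat{\rank_v}(M)(\delta)\le\dim_{\delta v}(M)$, choose $U\in B(M,\delta)$ with $\beta_0(U)=\dim_{\delta v}(M)$. Unwinding the diagram defining an $\varepsilon$-equivalence shows that a morphism of tame functors is a $\delta$-equivalence exactly when its kernel and cokernel lie in $\mathcal{V}_{C_v,\delta}$. The inclusion $\iota\colon U\hookrightarrow M$ has $\ker\iota=0$ and $\coker\iota=M/U\in\mathcal{V}_{C_v,\delta}$, hence is a $\delta$-equivalence; applying this to the cospan $M\xrightarrow{\ \id\ }M\xleftarrow{\ \iota\ }U$ (with the legs a $0$- and a $\delta$-equivalence) shows that $M$ and $U$ are $\delta$-equivalent, so $d_{C_v}(M,U)\le\delta$ and $\widehat{\rank_v}(M)(\delta)\le\beta_0(U)=\dim_{\delta v}(M)$.

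For the reverse inequality, fix $N$ with $d_{C_v}(M,N)\le\delta$ and any $\delta'>\delta$. Then there is a cospan $M\xrightarrow{f}L\xleftarrow{g}N$ with $\ker f,\coker f\in\mathcal{V}_{C_v,\varepsilon_1}$, $\ker g,\coker g\in\mathcal{V}_{C_v,\varepsilon_2}$ and $\varepsilon_1+\varepsilon_2\le\delta'$. From $\coker g\in\mathcal{V}_{C_v,\varepsilon_2}$ one gets $(\varepsilon_2 v)\ast L\subseteq\image(g)$, and since $\image(g)$ is generated by the $g$-images of a minimal generating set of $N$, this produces a set of $\varepsilon_2 v$-generators of $L$ of cardinality at most $\beta_0(N)$. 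The key step is to transport this along $f$: using that the $(\varepsilon_1 v)$-shift annihilates $\ker f$, that $(\varepsilon_1 v)\ast L\subseteq\image(f)$, the identity $((\sigma+\tau)v)\ast M=(\sigma v)\ast\bigl((\tau v)\ast M\bigr)$, and the extension axiom for noise systems (so that equivalences pointing in the same direction compose without extra loss, which one arranges by passing to the pullback $M\times_L N$), one lifts such a generating set to elements of $M$ generating a submodule $U$ with $M[\delta' v]\subseteq U\subseteq M$ and $\beta_0(U)\le\beta_0(N)$. Hence $\dim_{\delta' v}(M)\le\beta_0(U)\le\beta_0(N)$. Letting $\delta'\downarrow\delta$ and using that $\delta\mapsto\dim_{\delta v}(M)$ is a right-continuous step function (by tameness of $M$) gives $\dim_{\delta v}(M)\le\beta_0(N)$; taking the minimum over all such $N$ finishes the proof.

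The main obstacle is precisely the transport step: a naive lifting across the factorization of $f$ into a surjection with kernel in $\mathcal{V}_{C_v,\varepsilon_1}$ followed by an injection with cokernel in $\mathcal{V}_{C_v,\varepsilon_1}$ spends an extra unit of noise, and one must organize the argument — via the pullback $M\times_L N$ and the extension axiom, as indicated — so that the total noise consumed stays within $\varepsilon_1+\varepsilon_2$ rather than $2\varepsilon_1+\varepsilon_2$. (Equivalently, this direction is an instance of the general description in \cite{GC} of the hierarchical stabilization of $\beta_0$ relative to a simple noise system, specialized to the one attached to $C_v$.)
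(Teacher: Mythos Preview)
The paper's proof is a two-line application of a black box: it cites \cite[Theorem~8.3]{GC}, which already identifies $\widehat{\rank_v}(M)(\delta)$ with $\min\{\beta_0(U)\mid U\in B(M,\delta)\}$, and then simply observes that for the standard contour $B(M,\delta)=\{U\subseteq M\mid \delta v\ast(M/U)=0\}$, which is exactly the set over which $\dim_{\delta v}$ minimizes. Your step identifying $\dim_{\delta v}(M)=\min\{\beta_0(U)\mid U\in B(M,\delta)\}$ is the same final observation; the difference is that you try to \emph{reprove} Theorem~8.3 rather than cite it.

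Your reproof of the easy inequality $\widehat{\rank_v}(M)(\delta)\le\dim_{\delta v}(M)$ is fine. The reverse direction, however, is not a proof but a programme: you correctly isolate the transport step through the cospan as ``the main obstacle'', mention the pullback $M\times_L N$ and the extension axiom, and then assert that ``one lifts such a generating set''---but this lifting is exactly the content of \cite[Theorem~8.3]{GC} and you do not carry it out. Your own closing parenthetical concedes this. In addition, the appeal to right-continuity of $\delta\mapsto\dim_{\delta v}(M)$ is asserted rather than proved; it is true for tame $M$, but it requires an argument (finitely many critical parameters coming from a finite presentation), and you cannot borrow it from the step-function remark following the proposition, since that remark is a consequence of the very identity you are proving.

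In short: if you are willing to invoke \cite[Theorem~8.3]{GC}, your argument collapses to the paper's proof and the entire second half is unnecessary; if you are not, then the hard direction is still a sketch with a genuine gap at the transport step.
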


\begin{proof}
Denote by $C$ the standard contour in  the direction of $v.$
	As in \cite[Section~6.2]{noise}, we denote by
	$$ \mathcal{V}_{\delta}  \, \coloneqq\,  \left\{ N  \in \Tame (\Rnn^\param,\Vect_{\field}) \mid  \forall_{u\in \Rnn^\param:C(u,\delta)<\infty} \, \ker\left( N(u \leq u+ \delta \cdot v) \right) = N(u) \right\}$$
the noise system associated to the standard contour $C.$
	The system $\left\{ \mathcal{V}_{\delta} \right\}_{\delta \in \Rnn}$ is called the {\em standard noise in the direction of $v$}. 
Then, by~\cite[Theorem 8.3]{GC},
\[ \widehat{\rank_v}(M)(\delta)  \,=\,  \min\left\{ \rank(U) \mid U\in B(M,\delta) \right\}.\]
	Addition of the vector $\delta v$ in the definition of $ \mathcal{V}_{\delta}$ corresponds to the action of $\delta v$ on the module.
	Hence, in our case,
	\[ B(M,\delta) \,=\, \left\{ U\in \Tame (\Rnn^\param,\Vect_{\field}) \mid U\subseteq M, \, \delta v \ast (M/U)=0  \right\}, \]
and the claim follows.
\end{proof}
Note that $\widehat{\beta_0}_v(M)$ is a monotone, non-increasing step function. 

\begin{remark}
\cite[Theorem 8.3]{GC} is closely related to~\cite[Proposition~11.1]{noise}. The definition we use implies uniqueness properties of the shift, cf.~\cite[Proposition 8.1]{GC}. Furthermore, in~\cite{GC,noise}, noise systems are described for the non-negative {\em rational} numbers instead of the non-negative real numbers. The corresponding arguments we use generalize to $\Tame (\Rnn^\param,\Vect_{\field}).$
\end{remark}

\begin{remark}\label{remark:L}
As shown in \cite[Section 11]{GC}, computing the shift-dimension can be reduced to an NP-complete problem and hence is NP-hard itself:  for a certain class of persistence modules, the computation of the shift-dimension is at least as hard as the RANK-$3$ problem \cite{rank3}, an optimization problem in linear algebra. 
\end{remark}
 
We obtain a truncated version of the shift-dimension as stabilization of the zeroth multigraded Betti number by changing the contour to be truncated. For $C$ a contour and $\alpha \in \Rnn^\param,$ denote by $C_\alpha$ the {\em truncation} of $C$ at $\alpha.$ This is defined as 
\[
C_\alpha (x,\varepsilon) \, \coloneqq \, \begin{cases}
C(x,\varepsilon) & \text{if }  \alpha \not\leq C(x,\varepsilon),\\
\infty & \text{otherwise}.
\end{cases}
\]
 We call the corresponding noise system the {\em $\alpha$-truncated} noise system. 

\begin{proposition}\label{prop trunc stab nrank} 
	Under the assumptions of \Cref{prop stab nrank} with $\alpha$-truncated noise in the direction of a normalized vector $v,$ $\beta_0$ stabilizes as follows with respect to the $\alpha$-truncated standard contour in the direction of $v$:
\[ \widehat{\rank_{v,\alpha}}(M) (\delta) \,=\, \dim_{\delta v}(M/ \oplus_{\beta\geq \alpha}M_\beta). \]  
\end{proposition}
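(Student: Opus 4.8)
The plan is to mimic the proof of \Cref{prop stab nrank}, using the truncated contour $C_\alpha$ in place of the standard one and then matching the resulting minimum against $\dim_{\delta v}\bigl(M/\!\oplus_{\beta\geq\alpha}M_\beta\bigr)$. First I would observe that $C_\alpha$ is a persistence contour: axiom~(i) is immediate, and axiom~(ii) follows by distinguishing the cases $\alpha\leq C(x,\varepsilon)$ and $\alpha\not\leq C(x,\varepsilon)$ and using that $\varepsilon\mapsto x+\varepsilon v$ is monotone. Hence, by the correspondence between persistence contours and simple noise systems (\cite[Theorem 9.6]{GC}), the $\alpha$-truncated noise system $\{\mathcal V^{\alpha}_\delta\}_\delta=\{\mathcal V_{C_\alpha,\delta}\}_\delta$ is simple, and the argument in the proof of \Cref{prop stab nrank} (via \cite[Theorem 8.3]{GC}) gives
\[
\widehat{\rank_{v,\alpha}}(M)(\delta)\;=\;\min\bigl\{\beta_0(U)\,\bigm|\,U\subseteq M\ \text{a tame subfunctor with}\ M/U\in\mathcal V^{\alpha}_\delta\bigr\}.
\]
Since $C_\alpha(u,\delta)\neq\infty$ exactly when $\alpha\not\leq u+\delta v$ (and then $C_\alpha(u,\delta)=u+\delta v$), unwinding the definition of $\mathcal V_{C_\alpha,\delta}$ shows that $M/U\in\mathcal V^{\alpha}_\delta$ is equivalent to: $\image\bigl(M(\gamma-\delta v\leq\gamma)\bigr)\subseteq U_\gamma$ for every $\gamma\in\Rnn^r$ with $\gamma\geq\delta v$ and $\gamma\not\geq\alpha$.

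Next I would set $M':=M/\!\oplus_{\beta\geq\alpha}M_\beta$, noting that $\oplus_{\beta\geq\alpha}M_\beta$ is a graded submodule of $M$ (it is closed under the $\field[\Rnn^r]$-action, since that raises degrees) and that the quotient map $\pi\colon M\to M'$ is an isomorphism in each degree $\gamma\not\geq\alpha$ and zero in each degree $\gamma\geq\alpha$. By \Cref{def gdim}, together with the fact that a submodule $V$ is generated by $\beta_0(V)$ of its elements, $\dim_{\delta v}(M')=\min\{\beta_0(V)\mid V\subseteq M'\text{ a submodule with }\delta v\ast(M'/V)=0\}$, and — using that $\delta v\ast N$ is in degree $\gamma$ the image of $N(\gamma-\delta v\leq\gamma)$, exactly as in the proof of \Cref{prop stab nrank} — the condition $\delta v\ast(M'/V)=0$ is equivalent to: $\image\bigl(M'(\gamma-\delta v\leq\gamma)\bigr)\subseteq V_\gamma$ for every $\gamma\geq\delta v$ with $\gamma\not\geq\alpha$ (for $\gamma\geq\alpha$ the target $M'_\gamma$ vanishes, so nothing is imposed). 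Because $\gamma-\delta v\leq\gamma\not\geq\alpha$ forces $\gamma-\delta v\not\geq\alpha$, the map $\pi$ identifies $M$ and $M'$ in all degrees appearing in either condition, and $M'(\gamma-\delta v\leq\gamma)=M(\gamma-\delta v\leq\gamma)$ there.

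With this dictionary the two minima coincide. For ``$\leq$'': given a tame $U\subseteq M$ with $M/U\in\mathcal V^{\alpha}_\delta$, its image $\pi(U)\subseteq M'$ has $\pi(U)_\gamma=U_\gamma$ for all $\gamma\not\geq\alpha$, hence satisfies $\delta v\ast(M'/\pi(U))=0$, while $\beta_0(\pi(U))\leq\beta_0(U)$ since $\pi(U)$ is a quotient of $U$; minimizing over $U$ yields $\dim_{\delta v}(M')\leq\widehat{\rank_{v,\alpha}}(M)(\delta)$. For ``$\geq$'': given $V\subseteq M'$ with $\delta v\ast(M'/V)=0$, choose homogeneous generators $m_1,\dots,m_k$ of $V$ with $k=\beta_0(V)$, lift them to homogeneous $\tilde m_1,\dots,\tilde m_k\in M$ of the same degrees, and put $U:=\langle\tilde m_1,\dots,\tilde m_k\rangle$; then $\pi(U)=V$, so $U_\gamma=V_\gamma$ for $\gamma\not\geq\alpha$, whence $M/U\in\mathcal V^{\alpha}_\delta$ and $\beta_0(U)\leq k$, giving $\widehat{\rank_{v,\alpha}}(M)(\delta)\leq\dim_{\delta v}(M')$. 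Combining the two inequalities proves the claim. The only genuine obstacle is the bookkeeping of the previous two paragraphs: one must check that the defining conditions of $\mathcal V^{\alpha}_\delta$ and of $\delta v\ast(M'/-)=0$ constrain only degrees on which $\pi$ is invertible, so that transporting subfunctors along $\pi$ neither creates nor destroys feasible solutions; the vanishing $M'_\gamma=0$ for $\gamma\geq\alpha$ and the implication $\gamma\not\geq\alpha\Rightarrow\gamma-\delta v\not\geq\alpha$ are precisely the two small observations that make this work.
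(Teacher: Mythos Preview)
Your proof is correct and follows essentially the same route as the paper's: identify the noise system attached to the truncated contour, invoke \cite[Theorem~8.3]{GC} to express $\widehat{\rank_{v,\alpha}}(M)(\delta)$ as a minimum over subfunctors $U\subseteq M$ with $M/U\in\mathcal V^{\alpha}_\delta$, and then match that minimum against $\dim_{\delta v}(M')$. The paper compresses the last step into the single line $B_\alpha(M,\delta)=\{U\subseteq M\mid \delta v\ast((M/\oplus_{\beta\geq\alpha}M_\beta)/[U])=0\}$ and declares the result, whereas you spell out the projection/lifting argument showing that minimizing $\beta_0$ over $U\subseteq M$ and over $V\subseteq M'$ give the same value; your version is simply more detailed.
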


\begin{proof}
	The noise system associated to the truncated standard contour is given by
	$$\mathcal{V}_{\delta,\alpha}  \coloneqq  \left\{ N  \in \Tame ({\Rnn}^\param,\Vect_{\field}) \mid \forall_{u\in \Rnn^\param:C_\alpha(u,\delta)<\infty}  \ker\left( N(u \leq u+ \delta  v) \right) = N(u)  \right\}.$$
	We denote the balls corresponding to this noise system by  $B_{\alpha}$. Applying~\cite[Theorem 8.3]{GC}, we get
	\[ \widehat{\rank_{v,\alpha}}(M)(\delta)  \,=\,  \min\left\{ \rank(U) \mid U\in B_\alpha(M,\delta) \right\}.\]
	We have $M/U\in \mathcal{V}_{\delta,\alpha}$ iff for all homogeneous $m\in M/U$ we have $\delta v\ast m=0$ or $\alpha\leq \deg(\delta v \ast m).$ Hence,
	\[ B_\alpha(M,\delta) = \left\{ U\in \Tame (\Rnn^\param,\Vect_{\field}) \mid U\subseteq M, \, \delta v \ast ((M/\oplus_{\beta\geq \alpha}M_\beta)/[U])=0  \right\}, \]
	and thus the claim of the proposition follows.
\end{proof}
Therefore, computing the stabilized $\beta_0$ with respect to a truncated contour corresponds to truncating the module at degree $\alpha.$

\subsection{Properties of the shift-dimension}\label{section properties}
We now state some fundamental properties of the shift-dimension.  In particular, we investigate it regarding additivity. 
 
On the level of epimorphisms, the following holds true.
\begin{lemma}
	Let $\varphi\colon M \twoheadrightarrow N$ be an epimorphism of tame persistence modules. Then the following holds true: $\dim_v(N) \leq \dim_v(M)$ for all $v\in \Rnn^r.$
\end{lemma}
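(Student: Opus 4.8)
The plan is to lift a $v$-basis of $M$ along the epimorphism $\varphi$ and show its image $v$-generates $N$. Concretely, let $m_1,\ldots,m_k$ be a $v$-basis of $M$, so $k=\dim_v(M)$ and $v\ast(M/\langle m_1,\ldots,m_k\rangle)=0$. Set $n_i\coloneqq\varphi(m_i)\in N$. First I would observe that because $\varphi$ is surjective and graded, it descends to a surjection $\bar\varphi\colon M/\langle m_1,\ldots,m_k\rangle \twoheadrightarrow N/\langle n_1,\ldots,n_k\rangle$; indeed $\varphi$ maps $\langle m_1,\ldots,m_k\rangle$ onto $\langle n_1,\ldots,n_k\rangle$, so the quotient map is well-defined and remains surjective.

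The key step is then that the monoid action $v\ast(-)$ is functorial, hence compatible with this surjection: applying $v\ast(-)$ to $\bar\varphi$ yields a surjection $v\ast(M/\langle m_1,\ldots,m_k\rangle)\twoheadrightarrow v\ast(N/\langle n_1,\ldots,n_k\rangle)$. (Phrased in the functor language: $v\ast(-)$ shifts the grading and the quotient is computed pointwise, so it preserves epimorphisms.) Since the source is $0$ by assumption, the target is $0$ as well, i.e.\ $v\ast(N/\langle n_1,\ldots,n_k\rangle)=0$. Therefore $n_1,\ldots,n_k$ $v$-generate $N$, so $\dim_v(N)\leq k=\dim_v(M)$. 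Strictly speaking one should note the $n_i$ can be taken homogeneous since the $m_i$ can, so that the minimality definition of $\dim_v(N)$ via homogeneous generators applies.

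I do not expect a serious obstacle here; the statement is essentially the observation that ``number of $v$-generators'' is monotone along quotients, which is immediate once one checks that forming $\langle m_1,\ldots,m_k\rangle$, passing to the quotient, and applying $v\ast(-)$ all commute with surjections. The only mild care needed is to make sure that $\varphi(\langle m_1,\ldots,m_k\rangle) = \langle \varphi(m_1),\ldots,\varphi(m_k)\rangle$ exactly (not just $\subseteq$), which holds because $\varphi$ is a surjective $\field[\Rnn^\param]$-module homomorphism, so any element of $\langle n_1,\ldots,n_k\rangle$ is $\sum_i c_i n_i = \varphi(\sum_i c_i m_i)$ with $\sum_i c_i m_i \in \langle m_1,\ldots,m_k\rangle$. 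Everything else is formal.
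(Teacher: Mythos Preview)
Your proof is correct and follows the same approach as the paper: take a $v$-basis $\{m_1,\ldots,m_k\}$ of $M$, push it forward via $\varphi$, and observe that the images $v$-generate $N$. The paper states this in one line without unpacking the quotient/functoriality argument; your added remark about needing $\varphi(\langle m_1,\ldots,m_k\rangle)=\langle n_1,\ldots,n_k\rangle$ is in fact unnecessary (only the inclusion is needed for $\bar\varphi$ to be well-defined, and its surjectivity follows directly from surjectivity of $\varphi$), but this does not affect correctness.
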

\begin{proof}
	Let $\{m_1,\ldots, m_{\dim_v(M)}\}$ be a $v$-basis of $M.$  Then $\varphi(m_1),\ldots,\varphi(m_{\dim_v(M)})$ \mbox{$v$-generate} $N$ and hence  $\dim_v(N)\leq \dim_v(M).$ 
\end{proof}
For monomorphisms, we do not get a corresponding inequality in the reverse direction, as the following counterexample demonstrates.
\begin{example}\label{ex:basisnonadditive}
 Let $v=(1,1),$ $M$ be the interval module generated in degrees $(0,2)$ and $(2,0),$ and $N$ the free module generated by a single element in degree ~$(0,0).$ Then $M  \xhookrightarrow{} N$ and $\dim_v(M)=2\nleq 1= \dim_v(N).$
\end{example}

\begin{lemma}\label{lemma:drop1}
	Let $v\in \Rnn^\param$ and $m\in M.$ Then $\dim_v (M/\langle m \rangle )$ is contained in \linebreak$\{\dim_v(M),\dim_v(M)-1\},$ i.e., taking the quotient by a submodule that is generated by a single element can drop the shift-dimension by one at most.
\end{lemma}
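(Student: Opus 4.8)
The plan is to bound $\dim_v(M/\langle m\rangle)$ both from above and from below in terms of $\dim_v(M)$. For the upper bound $\dim_v(M/\langle m\rangle)\le\dim_v(M)$, I would simply apply the preceding lemma on epimorphisms to the quotient map $\varphi\colon M\twoheadrightarrow M/\langle m\rangle$. The only real content is therefore the lower bound $\dim_v(M/\langle m\rangle)\ge\dim_v(M)-1$.

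For the lower bound, suppose $\{\bar m_1,\dots,\bar m_k\}$ is a $v$-basis of $M/\langle m\rangle$, so $k=\dim_v(M/\langle m\rangle)$ and $v\ast\bigl((M/\langle m\rangle)/\langle\bar m_1,\dots,\bar m_k\rangle\bigr)=0$. Lift each $\bar m_i$ to a homogeneous element $m_i\in M$ of the same degree (using the lemma guaranteeing homogeneous $v$-bases and the surjectivity of $\varphi$). The claim is that $m_1,\dots,m_k,m$ together $v$-generate $M$: indeed, $M/\langle m_1,\dots,m_k,m\rangle\cong (M/\langle m\rangle)/\langle\bar m_1,\dots,\bar m_k\rangle$ via the third isomorphism theorem (both sides are $M$ modulo the submodule generated by $m,m_1,\dots,m_k$), and the right-hand side is killed by $v\ast(-)$. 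Hence $\dim_v(M)\le k+1$, i.e. $\dim_v(M/\langle m\rangle)=k\ge\dim_v(M)-1$. Combining the two bounds gives $\dim_v(M/\langle m\rangle)\in\{\dim_v(M)-1,\dim_v(M)\}$, with the boundary case where $\dim_v(M)=0$ being trivial since then the shift-dimension is nonnegative and the epimorphism bound forces it to be $0$ as well.

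The step I expect to require the most care is the identification $M/\langle m_1,\dots,m_k,m\rangle\cong (M/\langle m\rangle)/\langle \bar m_1,\dots,\bar m_k\rangle$ at the level of $\Rnn^\param$-graded $\field[\Rnn^\param]$-modules: one must check that $\varphi$ carries $\langle m_1,\dots,m_k\rangle$ onto $\langle\bar m_1,\dots,\bar m_k\rangle$ (immediate since $\varphi(m_i)=\bar m_i$ and $\varphi$ is a surjective module homomorphism) and that the preimage of $\langle\bar m_1,\dots,\bar m_k\rangle$ under $\varphi$ is exactly $\langle m_1,\dots,m_k,m\rangle$, which holds because $\ker\varphi=\langle m\rangle$. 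One should also note that $v\ast(-)$ is a functor preserving the relevant quotients, so that vanishing of $v\ast$ applied to a module is inherited by the isomorphic module; this is routine but worth stating, since it is exactly what turns the $v$-generating property of $\{\bar m_i\}$ into the $v$-generating property of $\{m,m_1,\dots,m_k\}$. Everything else is bookkeeping with the definitions in \Cref{def gdim}.
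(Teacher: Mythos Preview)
Your proof is correct and follows essentially the same approach as the paper: lift a $v$-basis of $M/\langle m\rangle$ to $M$, adjoin $m$, and observe that the resulting set $v$-generates $M$. The paper phrases this as a contradiction (assuming a $v$-basis of size $\dim_v(M)-2$ exists) and leaves the upper bound implicit via the epimorphism lemma, while you argue directly and spell out both bounds, but the substance is identical.
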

\begin{proof}
Assume there exist $m_1,\ldots,m_{\dim_v(M)-2}\in M/\langle m \rangle$ that are a $v$-basis of $M/\langle m \rangle.$ Then for any choice of representatives $\tilde{m}_i$ of $m_i,$ $m,\tilde{m}_1,\ldots,\tilde{m}_{\dim_v(M)-2}$ $v$-generate~$M.$ If follows that $\dim_v(M)\leq \dim_v(M)-1,$ which is a contradiction.
\end{proof}
In order to decide whether an element $m\in M$ can be extended to a $v$-basis $\{m,m_2,\ldots,m_{\dim_v(M)}\}$ of $M,$ one makes use of the following criterion.
\begin{lemma}
	Let $v\in \Rnn^\param$ and $m\in M.$ There exists a $v$-basis of $M$ containing $m$ if and only if $\dim_v (M/\langle m \rangle) =\dim_v(M)-1.$
\end{lemma}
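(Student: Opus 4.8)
The plan is to derive the equivalence from \Cref{lemma:drop1} together with the third isomorphism theorem for graded modules. The algebraic fact I would record first is that, for homogeneous $m,m_2,\dots,m_k\in M$ (we always take generators homogeneous, per the convention following \Cref{def gdim}),
\[
M/\langle m,m_2,\dots,m_k\rangle \;\cong\; \bigl(M/\langle m\rangle\bigr)\big/\langle \bar m_2,\dots,\bar m_k\rangle,
\]
where $\bar m_i$ is the image of $m_i$ under the projection $M\twoheadrightarrow M/\langle m\rangle$. Applying the monoid action $v\ast(-)$ to both sides then shows that $m,m_2,\dots,m_k$ $v$-generate $M$ if and only if $\bar m_2,\dots,\bar m_k$ $v$-generate $M/\langle m\rangle$. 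Throughout I would write $k\coloneqq\dim_v(M)$.

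For the ``only if'' direction, I would start from a $v$-basis $\{m,m_2,\dots,m_k\}$ of $M$. The displayed observation immediately gives that $\bar m_2,\dots,\bar m_k$ $v$-generate $M/\langle m\rangle$, so $\dim_v(M/\langle m\rangle)\le k-1$. Then \Cref{lemma:drop1}, applied to the element $m\in M$, supplies the matching lower bound $\dim_v(M/\langle m\rangle)\ge k-1$, and combining the two yields $\dim_v(M/\langle m\rangle)=\dim_v(M)-1$.

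For the ``if'' direction, I would take $\dim_v(M/\langle m\rangle)=k-1$ and pick a $v$-basis $\bar m_2,\dots,\bar m_k$ of $M/\langle m\rangle$, homogeneous by the standing convention; each $\bar m_i$ lifts to a homogeneous $m_i\in M$ of the same degree. By the opening observation $\{m,m_2,\dots,m_k\}$ $v$-generates $M$, and it has exactly $k=\dim_v(M)$ elements, so it is a $v$-basis of $M$ containing $m$.

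I do not expect a genuine obstacle here: the statement is essentially bookkeeping once \Cref{lemma:drop1} is in hand. The only points I would spell out are that the quotient identification above is an isomorphism of graded modules on the nose, that homogeneous representatives exist, and that the degenerate cases behave correctly: if $m=0$ then $M/\langle m\rangle=M$, so $\dim_v(M/\langle m\rangle)=\dim_v(M)\neq\dim_v(M)-1$, while no $v$-basis can contain $0$ since deleting the zero element from a would-be $v$-basis leaves a $v$-generating set of strictly smaller size; and if $\dim_v(M)=0$ the empty set is the unique $v$-basis and contains no $m$, consistently with $\dim_v(M/\langle m\rangle)=0\neq-1$.
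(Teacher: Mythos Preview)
Your proposal is correct and follows essentially the same route as the paper: both directions pass between $v$-generating sets of $M$ containing $m$ and $v$-generating sets of $M/\langle m\rangle$ via lifting and projection, and invoke \Cref{lemma:drop1} to pin down the equality in the ``only if'' direction. Your extra remarks on the third isomorphism theorem and the degenerate cases $m=0$ and $\dim_v(M)=0$ are welcome elaborations but do not change the strategy.
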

\begin{proof}Let $\{m,m_2,\ldots,m_{\dim_v(M)}\}$ be a $v$-basis of $M.$ Then $[m_2],\ldots,[m_{\dim_v(M)}]$\linebreak  \mbox{$v$-generate} $M/ \langle m \rangle.$ Hence $\dim_v(M/\langle m \rangle) \leq \dim_v(M)-1$ and therefore \linebreak$\dim_v(M/\langle m \rangle) = \dim_v(M)-1$ by \Cref{lemma:drop1}. This proves the implication from left to right. Now let $\{m_1,\ldots,m_{\dim_v(M)-1}\}$ be a $v$-basis of $M/\langle m \rangle$ and choose representatives $\tilde{m}_1,\ldots,\tilde{m}_{\dim_v(M)-1}\in M$ for which $[\tilde{m}_i]=m_i,$ $i=1,\ldots,\dim_v(M)-1.$ Then $m,\tilde{m}_1,\ldots,\tilde{m}_{\dim_v(M)-1}$ $v$-generate $M$ and hence already are a $v$-basis of $M.$
\end{proof}

For short exact sequences, we prove the following two inequalities.
\begin{theorem}
	Let $0 \longrightarrow M \stackrel{\varphi}{\longrightarrow} L \stackrel{\psi}{\longrightarrow} N \to 0$ be a short exact sequence of persistence modules. Then for all $v,w\in \Rnn^r,$ the following two inequalities hold: 
	\begin{enumerate}[(i)]
		\item  $\dim_{v+w}(L)\leq \dim_v(M)+\dim_w(N),$ and
		\item $\dim_v(L) \leq \dim_v (N) + \beta_0(M).$
	\end{enumerate} 
\end{theorem}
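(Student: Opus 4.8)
The plan is to prove both inequalities by producing explicit $v$-generating sets from the given data and invoking the definition of $\dim$ directly; no homological machinery beyond the short exact sequence is needed.

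For part (i), I would start by choosing a $v$-basis $m_1,\ldots,m_{\dim_v(M)}$ of $M$ and a $w$-basis $n_1,\ldots,n_{\dim_w(N)}$ of $N$. Since $\psi$ is surjective, lift each $n_j$ to an element $\tilde n_j \in L$ with $\psi(\tilde n_j) = n_j$. I claim the collection $\varphi(m_1),\ldots,\varphi(m_{\dim_v(M)}),\tilde n_1,\ldots,\tilde n_{\dim_w(N)}$ is a set of $(v+w)$-generators of $L$; this gives the bound $\dim_{v+w}(L) \le \dim_v(M) + \dim_w(N)$. To verify the claim, let $U \subseteq L$ be the submodule generated by these elements, and take a homogeneous $\ell \in L$; the goal is $(v+w)\ast \ell \in U$. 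First, $w\ast\ell$ maps under $\psi$ into the submodule generated by $n_1,\ldots,n_{\dim_w(N)}$ modulo something killed by $w$ — more precisely, $w \ast \psi(\ell) \in \langle n_1,\ldots,n_{\dim_w(N)}\rangle$ by the $w$-generating property of $N$, so $w\ast\ell$ differs from an element of $\langle \tilde n_1,\ldots,\tilde n_{\dim_w(N)}\rangle \subseteq U$ by an element of $\ker\psi = \operatorname{im}\varphi$, i.e. $w\ast\ell = u_0 + \varphi(m)$ for some $u_0 \in U$ and homogeneous $m \in M$. Now apply $v\ast(-)$: $(v+w)\ast\ell = v\ast u_0 + \varphi(v\ast m)$, and since $v\ast m \in \langle m_1,\ldots,m_{\dim_v(M)}\rangle$ by the $v$-generating property of $M$, we get $\varphi(v\ast m) \in \langle \varphi(m_1),\ldots\rangle \subseteq U$; also $v\ast u_0 \in U$ because $U$ is a submodule. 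Hence $(v+w)\ast\ell \in U$, as required. (One should be slightly careful tracking homogeneity/degrees when saying "$w\ast\psi(\ell)$ lies in $\langle n_j\rangle$ implies $w\ast\ell - \sum(\text{scalars})\cdot\tilde n_j \in \ker\psi$", but since everything is graded and $\psi$ is graded this is routine.)

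For part (ii), the argument is similar but uses $\beta_0(M)$ instead of a $v$-basis of $M$: choose a minimal generating set $g_1,\ldots,g_{\beta_0(M)}$ of $M$ (these are $0$-generators, hence certainly $v$-generators would need $v=0$, but here we use that they literally generate $M$), and a $v$-basis $n_1,\ldots,n_{\dim_v(N)}$ of $N$ with lifts $\tilde n_j \in L$. I claim $\varphi(g_1),\ldots,\varphi(g_{\beta_0(M)}),\tilde n_1,\ldots,\tilde n_{\dim_v(N)}$ $v$-generates $L$. Indeed, for homogeneous $\ell\in L$, the $v$-generating property of $N$ gives $v\ast\psi(\ell) \in \langle n_1,\ldots,n_{\dim_v(N)}\rangle$, so $v\ast\ell - (\text{an element of }\langle\tilde n_j\rangle) \in \ker\psi = \operatorname{im}\varphi$; writing this element as $\varphi(m)$ with $m\in M$ and expanding $m$ in the generators $g_i$ (here no further shift is needed since the $g_i$ genuinely generate $M$), we land inside the submodule generated by our claimed set. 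This yields $\dim_v(L) \le \dim_v(N) + \beta_0(M)$.

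The main obstacle, such as it is, is purely bookkeeping: in both parts one must be careful that "$v \ast x$ lies in a submodule $\langle y_1,\ldots,y_k\rangle$" is used in the graded sense, i.e. as an equality of homogeneous elements with coefficients in $\field[\Rnn^r]$, and that lifting along the surjection $\psi$ respects degrees so that $\ker\psi = \operatorname{im}\varphi$ can be applied componentwise. There is also the minor point that for non-homogeneous $\ell$ one reduces to the homogeneous case by linearity (as justified by the lemma on homogeneous $v$-bases stated earlier), and that $M$ should be identified with its image $\varphi(M)\subseteq L$ throughout. Neither point presents a real difficulty, and I would keep the write-up of (ii) short by noting it is the specialization of the (i)-argument where the role of "$v$-basis of $M$ together with the shift $v$" is replaced by "generating set of $M$ with no shift."
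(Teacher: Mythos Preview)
Your proposal is correct and follows essentially the same approach as the paper. For (i) the arguments are identical: lift a $w$-basis of $N$ along $\psi$, combine with the image of a $v$-basis of $M$, and verify elementwise that this set $(v+w)$-generates $L$. For (ii) the paper packages the same computation slightly more abstractly---it forms the free module $F$ on a $v$-basis of $N$, observes $L/(\varphi(M)+\operatorname{im}F)\cong N/\langle n_j\rangle$ has $v$-dimension zero, and bounds $\dim_v(L)$ by $\beta_0(M\oplus F)$---but unwinding this is exactly your elementwise argument, so there is no genuine difference.
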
 
\begin{proof} 
	Choose a $w$-basis $\{n_1,\ldots,n_a\}$ of $N.$ 
	For every~$i,$ choose $l_i \in L$  for which $\psi(l_i)=n_i.$  Let $\{g_1,\ldots,g_{\beta_0(L)}\}$ generate $L.$ 
	Then for all $i$ there exist $\alpha_1,\ldots,\alpha_a\in \field[\Rnn^r]$ such that $w\ast  g_i - \sum_{j=1}^a \alpha_j l_j \in \ker(\psi)=\image (\varphi).$ 
	Choose a $v$-basis $\{m_1,\ldots,m_b\}$ of $M.$ 
	Thus, for all $i,$ there exist \mbox{$\gamma_1,\ldots,\gamma_{b} \in \field[\Rnn^r]$} 
	such that $v\ast ( w\ast  g_i - \sum_{j=1}^a \alpha_j l_j  ) = \varphi(\sum_{k=1}^b \gamma_k m_k),$ which is equivalent to   $(v+w)g_i =   \sum_{j=1}^a (v\ast \alpha_j) l_j + 
	\sum_{k=1}^b \gamma_k   \varphi(m_k).$ Consequently \mbox{$\dim_{v+w}(L) \leq b + a= \dim_v(M)+\dim_w(N),$} concluding the proof of (i).
	For the proof of statement (ii), let $F$ be the free module generated by a $v$-basis of~$N.$ Since $\psi \colon L \twoheadrightarrow N$ is surjective, we obtain a morphism $F \to L$ and hence a morphism from $M\oplus F$ to $L.$ Since $L/(M\oplus F)\cong  N/F,$ it follows that $\dim_v(L/(M\oplus F))=0.$ Therefore, $\dim_v(L)\leq \beta_0(M\oplus F) = \beta_0 (M) + \beta_0(F)=\beta_0(M) + \dim_v(N).$
\end{proof}
If $\dim_v(M) = \beta_0 (M)$ or $\beta_0(M)=1,$ it follows that $\dim_{v}(L)\leq \dim_v(M)+\dim_v(N).$ 

The shift-dimension is sub-additive, i.e., $\dim_v(M\oplus N)\leq \dim_v (M) + \dim_v(N).$
In general, it is not additive, as the following counterexample for L-shapes demonstrates.
\begin{example}[$\param=2$]\label{counterexampleadd}
		Consider the interval modules $M=\field((1,0),\bullet)  / \field((1,2),\bullet)$ and $N = \field((0,1),\bullet) /\field((2,1),\bullet).$ Then $(1,1)\ast (M\oplus N)$ is  contained in the submodule of $M\oplus N$  that is generated by the element $(1,1)\in (M\oplus N)_{(1,1)}$ in degree $(1,1).$
		It follows that $\dim_{(1,1)}(M\oplus N)=1 \neq 1+1= \dim_{(1,1)}(M)+\dim_{(1,1)}(N).$
\end{example}
We now present some cases for which additivity of the shift-dimension does hold~true.
\begin{lemma}
	If $F$ and $G$ are two free persistence modules of finite rank, then $\dim_v(F\oplus G)=\dim_v (F)+\dim_v(G)$ for all $v\in \Rnn^r.$ 
\end{lemma}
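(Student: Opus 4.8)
The plan is to show the two inequalities $\dim_v(F\oplus G)\leq \dim_v(F)+\dim_v(G)$ and $\dim_v(F\oplus G)\geq \dim_v(F)+\dim_v(G)$ separately. The first is just the sub-additivity noted above, so the content is entirely in the reverse inequality. The key structural feature to exploit is that for a free module $F=\oplus_{g}\field(g,\bullet)^{\beta_0(g)}$, the action of $v$ is injective: the map $F(x\leq x+v)$ is always a monomorphism. Hence $v\ast F\cong F$ as an abstract module (it is the free module with the same generator multiset shifted by $v$), and more importantly, for a submodule $U\subseteq F$, $v\ast(F/U)=0$ forces $v\ast F\subseteq U$. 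So $\dim_v(F)$ equals the minimal number of homogeneous elements of $F$ whose generated submodule contains $v\ast F$.

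First I would reduce to the following clean statement: for a free module $F$ of finite rank, $\dim_v(F)$ equals the number of generators $g$ in a minimal generating set of $F$ with $g\leq g+v$ \emph{strictly below} all other generators in a suitable sense — more precisely, I claim $\dim_v(F)=\beta_0(v\ast F)$ reinterpreted via minimal generators of $F$, but the cleanest route is: a homogeneous element $m\in F$ of degree $d$ lies in $\langle m_1,\dots,m_k\rangle$ iff $m$ is an $\field[\Rnn^r]$-combination of the $m_i$, and since $F$ is free, $\beta_0(\langle m_1,\dots,m_k\rangle)\leq k$ always, with equality generically. The precise fact I need is: \emph{if $v\ast F\subseteq \langle m_1,\dots,m_k\rangle$ then $k\geq \beta_0(v\ast F)=\beta_0(F)$ whenever $v\ast F\hookrightarrow F$ is an iso onto a submodule requiring $\beta_0(F)$ generators.} Since $v\ast\colon F\to F$ is injective and $v\ast F$ is free of rank $\beta_0(F)$ (each generator $\field(g,\bullet)$ contributes $\field(g+v,\bullet)$), we get $\dim_v(F)=\beta_0(F)$ only when $v\ast F$ genuinely needs $\beta_0(F)$ generators — but in fact a submodule of a free module generated by $k$ elements has $\beta_0\leq k$, so $k\geq\beta_0(v\ast F)=\beta_0(F)$. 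Thus $\dim_v(F)=\beta_0(F)$ for \emph{any} free $F$ and \emph{any} $v$. Wait — this would contradict nothing here, but it does mean $\dim_v$ is constant in $v$ on free modules, which makes the lemma immediate: $\dim_v(F\oplus G)=\beta_0(F\oplus G)=\beta_0(F)+\beta_0(G)=\dim_v(F)+\dim_v(G)$, using that $\beta_0$ is additive on free modules (their minimal generator counts add).

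So the actual proof I would write: invoke that $v\ast(-)$ is injective on free modules, hence $v\ast(F/U)=0$ iff $v\ast F\subseteq U$, hence $\dim_v(F)$ is the least $k$ with a $k$-generated submodule containing the free module $v\ast F$ of rank $\beta_0(F)$; since any $k$-generated submodule $U$ of the free module $F$ satisfies $\beta_0(U)\leq k$ and since $\beta_0(v\ast F)=\beta_0(F)$, we need $k\geq\beta_0(F)$; conversely $k=\beta_0(F)$ works by taking a minimal generating set of $F$ itself. Therefore $\dim_v(F)=\beta_0(F)$, and applying this to $F$, $G$, and $F\oplus G$ together with additivity of $\beta_0$ on free modules finishes the proof.

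The main obstacle is justifying cleanly that a submodule of a finite-rank free $\Rnn^r$-graded $\field[\Rnn^r]$-module generated by $k$ elements satisfies $\beta_0\leq k$ — this is obvious ($k$ elements generate a submodule needing at most $k$ generators) — and that $\beta_0(v\ast F)=\beta_0(F)$, which follows because $v\ast$ sends the generator $\field(g,\bullet)$ to $\field(g+v,\bullet)$ bijectively on the generating set and preserves minimality (distinct generators stay distinct since $g\mapsto g+v$ is injective on $\Rnn^r$). Neither step is genuinely hard; the only subtlety worth a sentence is the injectivity of $v\ast(-)$ on free modules, which reduces to the observation that each structure map $\field(g,x)\to\field(g,x+v)$ is either an isomorphism or has trivial source.
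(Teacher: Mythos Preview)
Your overall strategy --- show $\dim_v(H)=\beta_0(H)$ for every free $H$ of finite rank, then invoke additivity of $\beta_0$ on free modules --- is sound and yields the lemma. But there is a gap in the chain of inequalities you assemble. From $v\ast F\subseteq U=\langle m_1,\dots,m_k\rangle$ together with $\beta_0(U)\leq k$ and $\beta_0(v\ast F)=\beta_0(F)$ you conclude $k\geq\beta_0(F)$. This silently uses that $V\subseteq U$ with $V$ free forces $\beta_0(V)\leq\beta_0(U)$. That implication is \emph{false} without the freeness hypothesis on $V$ (e.g.\ the ideal generated in degrees $(2,0)$ and $(0,2)$ inside $\field((0,0),\bullet)$; compare Example~3.6), and while it is true for free $V$, it is not automatic and you do not justify it. The justification is precisely the one step the paper singles out: choose a degree $h$ dominating all generator degrees involved; then $V_h=\field^{\beta_0(V)}$ because $V$ is free, whereas $U_h$ is spanned by the images of $m_1,\dots,m_k$ and so has $\field$-dimension at most $k$, whence $\beta_0(V)\leq k$.

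Thus your route and the paper's are really the same argument, except that you have buried the crucial observation (``look at a sufficiently large degree'') inside an unproved monotonicity claim. The paper applies that large-degree observation directly to $F\oplus G$ in one sentence; your version factors through the intermediate statement $\dim_v(F)=\beta_0(F)$, which is a clean formulation worth recording, but still requires exactly the same large-degree input to close the argument. The two ``obstacles'' you flag at the end ($\beta_0(U)\leq k$ and $\beta_0(v\ast F)=\beta_0(F)$) are indeed easy; the actual obstacle is the step in between.
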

\begin{proof}
	The statement follows from the fact that in sufficiently large degree, the elements of the $v$-basis need to generate $\field^{\beta_0(F)+\beta_0(G)}.$
\end{proof}

\begin{proposition}
	Let $M$ be an interval module generated by one element and $F$ a free persistence module of rank one. Then $\dim_v(M\oplus F)=\dim_v(M)+1$ for all $v\in \Rnn^r.$
\end{proposition}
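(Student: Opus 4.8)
The plan is to exploit that $M$, being generated by a single element $m_0$, is a cyclic $\field[\Rnn^\param]$-module, i.e. $M=\field[\Rnn^\param]\cdot m_0$. Then $v\ast M$ is the submodule generated by $v\ast m_0$, hence cyclic, so that $\dim_v(M)\le\beta_0(M)\le 1$, with $\dim_v(M)=0$ precisely when $v\ast m_0=0$. Accordingly I would split into the two cases $v\ast M=0$ and $v\ast M\neq 0$, and show that $\dim_v(M\oplus F)$ equals $1$ and $2$ respectively. The upper bound $\dim_v(M\oplus F)\le 2$ comes for free: if $\iota$ denotes the generator of $F$, then $(m_0,0)$ and $(0,\iota)$ generate all of $M\oplus F$, so they certainly $v$-generate it (equivalently, combine sub-additivity of $\dim_v$ with $\dim_v(F)=1$).

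In the case $v\ast M=0$ one has $v\ast(M\oplus F)=0\oplus(v\ast F)\subseteq 0\oplus F=\langle(0,\iota)\rangle$, so the single element $(0,\iota)$ $v$-generates $M\oplus F$ and $\dim_v(M\oplus F)\le 1$; since $\field[\Rnn^\param]$ is an integral domain and $\iota\neq 0$, we have $v\ast\iota\neq 0$, hence $v\ast(M\oplus F)\neq 0$, so $\dim_v(M\oplus F)=1=\dim_v(M)+1$.

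In the case $v\ast M\neq 0$ (so $\dim_v(M)=1$ and $v\ast m_0\neq 0$), suppose for contradiction that $\dim_v(M\oplus F)\le 1$; then there is $p=(n,f)\in M\oplus F$ with $v\ast(M\oplus F)\subseteq\langle p\rangle=\field[\Rnn^\param]\,p$. Since $v\ast(0,\iota)=(0,v\ast\iota)\in v\ast(M\oplus F)$, there is $r\in\field[\Rnn^\param]$ with $(0,v\ast\iota)=r\cdot(n,f)=(rn,rf)$; in particular $rf=v\ast\iota\neq 0$, so $f\neq 0$. Since also $v\ast(m_0,0)=(v\ast m_0,0)\in v\ast(M\oplus F)$, there is $s\in\field[\Rnn^\param]$ with $(v\ast m_0,0)=s\cdot(n,f)=(sn,sf)$, so $sf=0$. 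But $F$ is free of rank one, hence a shift of the domain $\field[\Rnn^\param]$, so the annihilator of the nonzero element $f$ vanishes; this forces $s=0$ and therefore $v\ast m_0=sn=0$, contradicting $v\ast m_0\neq 0$. Hence $\dim_v(M\oplus F)=2=\dim_v(M)+1$.

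I do not anticipate a serious obstacle: once cyclicity of $M$ reduces matters to $\dim_v(M)\in\{0,1\}$, the argument is short. The single point that genuinely uses the rank-one hypothesis on $F$ is the vanishing of $\operatorname{ann}_{\field[\Rnn^\param]}(f)$ for $f\neq 0$; for a free module of rank $\ge 2$ the element $s$ above could be a nontrivial syzygy and the argument breaks down, which is consistent with the failure of additivity for a direct sum of two interval modules in \Cref{counterexampleadd}. The only remaining verifications are routine module-theoretic facts — that $\langle p\rangle=\field[\Rnn^\param]\,p$, that the $\field[\Rnn^\param]$-action is computed componentwise on a direct sum, and that the monoid ring $\field[\Rnn^\param]$ is an integral domain.
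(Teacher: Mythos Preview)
Your proof is correct and follows essentially the same approach as the paper: a case split on $\dim_v(M)\in\{0,1\}$, with the nontrivial case handled by assuming a single $v$-generator $(n,f)$ and deriving a contradiction by looking at $v\ast(0,\iota)$ and $v\ast(m_0,0)$. The paper organizes the contradiction as a dichotomy $f=0$ versus $f\neq 0$, whereas you first force $f\neq 0$ and then use the vanishing annihilator of $f$ in the rank-one free module $F$ to conclude---this makes explicit the step the paper leaves implicit, but the substance is the same.
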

\begin{proof}
	For $v\in \Rnn^r$ s.t.\ $\dim_v(M)=0,$ the statement is clear. Hence let $v\in \Rnn^r$ be such that $\dim_v(M)=1.$ Then  $\dim_v(M\oplus F)\leq \dim_v(M)+\dim_v(F)=1+1=2$ and $\dim_v(M\oplus F) \geq \max \{\dim_v(M),\dim_v(F)\}=1.$ 
	Assume $U=\langle (m,f)\rangle $ for some $(m,f)\in M\oplus F.$ Then  $ v\ast ((M\oplus F) / U)  \not\cong 0,$ as we argue now. If $f=0,$ then for all $g\in F\setminus \{0\},$ the element $v\ast (0,g)\notin U.$ If $f\neq 0,$ then there exists $m\in M\setminus \{0\}$ such that $v\ast (m,0)\notin U,$ since by assumption there exists $m\in M$ for which $v\ast m\neq 0.$ Hence $\dim_v(M\oplus F)>1,$ proving the claim.
\end{proof}

\begin{proposition}
	Let $\dim_v(M)\leq 1$ and $F$ a free persistence module of rank $k.$ Then $\dim_v (M\oplus F)=\dim_v(M)+\dim_v(F).$
\end{proposition}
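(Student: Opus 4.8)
The plan is to combine the sub-additivity $\dim_v(M\oplus F)\le\dim_v(M)+\dim_v(F)$ with a matching lower bound. A preliminary observation is that $\dim_v(F)=k$: one has $\dim_v(F)\le\beta_0(F)=k$, and conversely, passing to a degree $d$ large enough that $F$ restricts on the subposet of degrees $\geq d$ to the constant functor $\field^k$ with all structure maps the identity (possible since each graded piece $\field[\Rnn^r]_e$ is at most one-dimensional, spanned by the monomial $x^e$, so homogeneous elements have canonical push-forwards to higher degrees), any set of $v$-generators of $F$ must push forward to a spanning set of $F_d=\field^k$ and hence has at least $k$ elements. In particular, if $\dim_v(M)=0$ then the projection $M\oplus F\twoheadrightarrow F$ already gives $\dim_v(M\oplus F)\ge\dim_v(F)=k$, and sub-additivity gives equality.

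So assume $\dim_v(M)=1$; I must rule out $\dim_v(M\oplus F)\le k$. Suppose $\{p_1,\dots,p_\ell\}$ with $\ell\le k$ $v$-generates $M\oplus F$, and put $U=\langle p_1,\dots,p_\ell\rangle$, so that $\beta_0(U)\le k$ and $v\ast\big((M\oplus F)/U\big)=0$. Writing $\pi_F\colon M\oplus F\to F$ for the projection, $F/\pi_F(U)$ is a quotient of $(M\oplus F)/U$ and hence is also killed by the shift $v\ast$; that is, $\pi_F(U)$ $v$-generates $F$. Since $\beta_0(\pi_F(U))\le\beta_0(U)\le k=\dim_v(F)$, all three numbers equal $k$; thus $\ell=k$, the $p_i$ minimally generate $U$, and the images $f_i\coloneqq\pi_F(p_i)$ form a $v$-basis of $F$ consisting of exactly $k$ elements.

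The crux is then that a $v$-basis $f_1,\dots,f_k$ of a free module $F$ of rank $k$ has no nonzero syzygies, i.e.\ the map $\bigoplus_{i=1}^k\field(\deg f_i,\bullet)\to F$ sending the $i$-th generator to $f_i$ is injective. Indeed, in a homogeneous relation $\sum_i\gamma_i f_i=0$ each $\gamma_i$ is a scalar multiple of a monomial; pushing the relation into a large degree $d$ as above, where $F_d=\field^k$, the push-forwards $\tilde f_1,\dots,\tilde f_k$ span $F_d$ (because $v\ast(F/\langle f_i\rangle)=0$ while the structure maps of $F$ are identities in large degree), hence form a basis of $\field^k$, so all the scalar coefficients of the pushed relation — and therefore all the $\gamma_i$ — vanish. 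Granting this, write $p_i=(m_i,f_i)$: any element of $U\cap(M\oplus 0)$ is of the form $\big(\sum_i\gamma_i m_i,\,0\big)$ with $\sum_i\gamma_i f_i=0$, hence equals $0$, so $U\cap(M\oplus 0)=0$. Finally, $\dim_v(M)=1$ forces $v\ast M\neq 0$, so there is a homogeneous $m^*\in M$, say of degree $c$, with $v\ast m^*\neq 0$; then $(v\ast m^*,0)$ is a nonzero element of $M\oplus 0$ and hence does not lie in $U$, so its class in $(M\oplus F)/U$ is nonzero — yet it equals $\big((M\oplus F)/U\big)(c\le c+v)\big([(m^*,0)]\big)$, contradicting $v\ast\big((M\oplus F)/U\big)=0$. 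Hence $\dim_v(M\oplus F)\ge k+1$, and together with sub-additivity this gives $\dim_v(M\oplus F)=k+1=\dim_v(M)+\dim_v(F)$.

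I expect the main obstacle to be the no-syzygy lemma, and in particular making the "everything becomes $\field^k$ with identity structure maps in large degree" step fully rigorous: one must choose $d$ uniformly above the generator degrees of $F$ and the degrees of the $f_i$, and verify that $\langle f_1,\dots,f_k\rangle$ stabilises there to a fixed subspace of $\field^k$ onto which the push-forwards of the $f_i$ surject. Everything else is routine bookkeeping with the two projections of $M\oplus F$.
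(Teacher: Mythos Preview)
Your proof is correct and follows essentially the same route as the paper's: handle $\dim_v(M)=0$ separately, then assume a $v$-basis of $M\oplus F$ of size $k$, project to $F$, argue that the images $f_1,\dots,f_k$ have no nontrivial relations, and deduce that $(v\ast m,0)$ cannot lie in the generated submodule. The paper compresses your no-syzygy lemma into the single phrase ``$f_1,\dots,f_k$ need to be linearly independent'' and asserts the final step without detail; your version spells out the passage to a large degree where $F$ becomes constant, which is exactly the content that phrase is hiding.
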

\begin{proof}
For $\dim_v(M)=0,$ the statement is clear. Now assume that $\dim_v(M)=1.$ Assume there exists a $v$-basis $\{ (m_1,f_1),\ldots,(m_k,f_k)\}$ $M$ of cardinality $k.$ Since $\dim_v(F)=k$ and $F$ is free, $f_1,\ldots,f_k$ need to be linearly independent. By assumption on the shift-dimension of $M,$ there exists $m\in M$ s.t.\ $v\ast m\neq 0.$ Because of the linear independence of $f_1,\ldots,f_k,$ the element $(v\ast m,0)$ is not contained in the submodule generated by $(m_1,f_1),\ldots,(m_k,f_k),$ in contradiction to the assumption.
\end{proof}

\subsection{A quantitative study of non-additivity}\label{section:locus}
In this subsection, we investigate non-additivity of the shift-dimension in greater detail and give a  measure for it. 
For a finite family of persistence modules $\{ M_i\}_{i \in I}$ and a fixed, possibly learned $v\in{\Rnn}^r,$ we define the {\em locus of non-additivity} of the shift-dimension as 
$$ \Loc_v \left( \left\{ M_i\right\}_{i \in I} \right) \, \coloneqq \, \left\{ \tau \in \Rnn \mid \dim_{\tau v} \left( \oplus_{i\in I} M_i \right) \neq \sum_{i \in I} \dim_{\tau v } \left( M_i  \right)\right\}.$$ 
For a quantitative study,  we associate to $\{M_i\}_{i\in I}$ the $L^p$-distance between the functions $\sum_{i\in I} \dim_{\bullet }(M_i)$ and $\dim_{\bullet }\left(\bigoplus_{i\in I}M_i\right)$ of $\tau,$ i.e., for $1\leq p<\infty,$ we study
$$\err_{v,p}\left( \left\{M_i\right\}_{i \in I}\right) \, \coloneqq \, \left( \int_{0}^\infty \left(\left(\sum_{i\in I} \dim_{\tau v}(M_i) \right) - \dim_{\tau v}\left(\bigoplus_{i\in I}M_i\right)\right)^p \diff \tau \right)^{1/p}.$$

If this expression  is sufficiently small, the sum of the shift-dimensions of the $M_i$ yields a good approximation of the shift-dimension  of their direct sum $\oplus M_i.$
Revisiting and generalizing \Cref{ex:basisnonadditive}, we now undertake more quantitative investigations.
\begin{example}\label{exnonaddloc}
	Let $v\in\mathbb{R}^r_{>0}.$ Let $M_1,M_2\neq0$ be interval modules generated by single elements $\gen_1$ and~$\gen_2,$ resp., and quotiented out by single elements $\widetilde{\gen}_1$ and $\widetilde{\gen}_2,$ resp. Assume  further that 
$\text{lcm}(\deg({\gen}_1),\deg({\gen}_2))<\deg(\widetilde{\gen}_2)\leq\deg(v\ast {\gen}_1)$ and $\text{lcm}(\deg({\gen}_1),\deg({\gen}_2))<\deg(\widetilde{\gen}_1)\leq\deg(v\ast {\gen}_2)$ 
as well as
$\deg(\widetilde{\gen}_1)\nleq\deg(v\ast{\gen}_1)$ and $\deg(\widetilde{\gen}_2)\nleq\deg(v\ast{\gen}_2).$
Then, $\text{dim}_v(M_1\oplus M_2)=1\neq 2=\text{dim}_v(M_1)+\text{dim}_v(M_2),$ since the element $(1,1)\in(M_1\oplus M_2)_{\text{lcm}(\deg(\gen_1),\deg(\gen_2))}$ divides both $v\ast(\gen_1,0)$ and $v\ast(0,\gen_2).$
	Now, for all $\tau\in\Loc_v(\{M_i\}_{i \in I})$ we get $\text{dim}_{\tau v}(M_1\oplus M_2)=1\neq 2=\text{dim}_{\tau v}(M_1)+\text{dim}_{\tau v}(M_2).$ The length of the interval $\Loc_v(\{M_i\}_{i \in I})$ yields $\err_{v,1}(\{M_1,M_2\}).$
\end{example}
Let $M_1,M_2$ be interval modules as in the example above. In this case, the union of their underlying intervals is an interval.
Denote by $M$ the corresponding interval module. Then, whenever $\text{dim}_{\tau v}(M_1\oplus M_2)=1,$ we have $\text{dim}_{\tau v}(M_1\oplus M_2)=\text{dim}_{\tau v}(M).$ 
 
\begin{figure}
\hspace*{-12mm}
\begin{minipage}[t]{0.08\textwidth}
\end{minipage}
\begin{minipage}[t]{0.47\textwidth}
\scalebox{.9}{
		\begin{tikzpicture}[scale=0.6, transform shape]
			\tikzstyle{grid lines}=[lightgray,line width=0]
			\foreach \r in {0,1,..., 9}
			\draw (\r,0) node[inner sep=.5pt,below=9pt,rectangle,fill=white] {\fontsize{13}{13}$\r$};
			\foreach \r in {0,1, 2,...,9}
			\draw (0,\r) node[inner sep=1pt,left=9pt,rectangle,fill=white] {\fontsize{13}{13}$\r$};
			\tikzstyle{every node}=[circle, draw, fill=niceblue, inner sep=1pt, minimum width=9pt] node{};
			\draw[loosely dotted] (0,4) -- (3,1);
			\foreach \r in {1,...,9}
			\foreach \s in {1,...,9}
			\node[fill=none,draw=none] at (\r,\s) {.};
			\foreach \r in {1,...,9} 
			\draw[black] (\r,-0.05) -- (\r,0.05);
			\foreach \r in {1,...,9} 
			\draw[black] (-0.05,\r) -- (0.05,\r);
				\fill[\modulegray] (0.7,3.3) rectangle (5.3,7.8);
				\fill[\shiftgray] (4.7,7.3) rectangle (5.3,7.8);
 				\draw[black] (0.7,3.3)  -- (0.7,7.8) ;
 				\draw[black] (0.7,3.3)  -- (5.3,3.3) ;
 				\draw[black] (5.3,7.8)  -- (5.3,3.3) ;
 				\draw[black] (5.3,7.8)  -- (0.7,7.8) ;
				%
				\fill[\modulegray] (2,2) rectangle (6.5,6.5);
				\fill[\shiftgray] (6,6) rectangle (6.5,6.5);
 				\draw[black] (2,2)  -- (2,6.5) ;
				\draw[black] (2,2)  -- (6.5,2) ;
				\draw[black] (6.5,6.5)  -- (2,6.5) ;
				\draw[black] (6.5,6.5)  -- (6.5,2) ;
				%
				\fill[\modulegray] (0,4) rectangle (4.3,9.5);
				\fill[\shiftgray] (4,8) rectangle (4.3,9.5);
 				\draw[black] (0,4)  -- (4.3,4) ;
 				\draw[black] (4.3,4)  -- (4.3,9.5) ;
				%
				\fill[\modulegray] (3,1) rectangle (9.5,5.5);
				\fill[\shiftgray] (7,5) rectangle (9.5,5.5);
 				\draw[black] (3,1)  -- (3,5.5) ;
 				\draw[black] (3,5.5)  -- (9.5,5.5) ;
 				\draw[black] (3,1)  -- (9.5,1) ;
				\draw[black,->] (0,0)  -- (0,9.5) ;
				\draw[black,->]  (0,0)  -- (9.5,0) ;
				\draw (0,4) node[fill=\modulegray]{};
				\draw (3,1) node[fill=\modulegray]{};
				\draw (0.7,3.3) node[fill=\modulegray]{};
				\draw (2,2) node[fill=\modulegray]{};
				\draw (4,8) node[fill=\shiftgray]{};
				\draw (4.7,7.3) node[fill=\shiftgray]{};
				\draw (6,6) node[fill=\shiftgray]{};
				\draw (7,5) node[fill=\shiftgray]{};
				\draw[dashed,\shiftgray] (4,8) -- (4,5);
				\draw[dashed,\shiftgray] (4.7,7.3) -- (4,5);
				\draw[dashed,\shiftgray] (6,6) -- (4,5);
				\draw[dashed,\shiftgray] (7,5) -- (4,5);
				\draw (4,5) node[fill=niceblue!90]{};
				\draw (4,4.5) node[draw=none,fill=none] {\fontsize{13}{13}\textcolor{niceblue!90}{$(1,1,1,1)$}};
		\end{tikzpicture}  }
\end{minipage}\hspace*{-8mm}
\begin{minipage}[t]{0.43\textwidth}
\scalebox{.9}{
		\newcommand{\scale}{1.8}
		\newcommand{\widthadjust}{1.6}
		\begin{tikzpicture}[scale=0.6, transform shape]
			\tikzstyle{grid lines}=[lightgray,line width=0]
			\foreach \r in {0,1,...,6}
			\draw (\scale*\r,0) node[inner sep=.5pt,below=11pt,rectangle,fill=white] {\fontsize{13}{13}$\r$};
			\foreach \r in {0,1, 2,...,5}
			\draw (0,\scale*\r) node[inner sep=1pt,left=11pt,rectangle,fill=white] {\fontsize{13}{13}$\r$};
			\tikzstyle{every node}=[circle, draw, fill=niceblue, inner sep=1pt, minimum width=8pt] node{};
			\foreach \r in {1,...,6}
			\foreach \s in {1,...,5}
			\node[fill=none,draw=none] at (\scale*\r,\scale*\s) {.};
				\draw[black,->] (0,0)  -- (\scale*0,9.5) ;
				\draw[black,->]  (0,0)  -- (9.5,0) ;
				\foreach \r in {1,...,6} 
				\draw[black] (\scale*\r,-\scale*0.05) -- (\scale*\r,\scale*0.05);
				\foreach \r in {1,...,5} 
				\draw[black] (-\scale*0.05,\scale*\r) -- (\scale*0.05,\scale*\r);
				\draw[niceblue!90,line width=\scale*\widthadjust*\goodwith] (\scale*0,\scale*3.92) -- (\scale*3,\scale*3.92);
				\draw[niceblue!90,line width=\scale*\widthadjust*\goodwith] (\scale*3,\scale*2) -- (\scale*3.6,\scale*2);
				\draw[niceblue!90,line width=\scale*\widthadjust*\goodwith] (\scale*3.6,\scale*.92) -- (\scale*4.5,\scale*.92);
				\draw[niceblue!90,line width=\scale*\widthadjust*\goodwith] (\scale*4.3,\scale*.92) -- (\scale*4.5,\scale*.92);
				\draw[niceblue!90,line width=\scale*\widthadjust*\goodwith] (\scale*4.5,\scale*-.08) -- (\scale*6.5,\scale*-.08);
				\draw[\goodred,line width=\scale*\widthadjust*\goodwith] (\scale*0,\scale*4.08) -- (\scale*4.3,\scale*4.08);
				\draw[\goodred,line width=\scale*\widthadjust*\goodwith] (\scale*4.3,\scale*1.08) -- (\scale*4.5,\scale*1.08);
				\draw[\goodred,line width=\scale*\widthadjust*\goodwith] (\scale*4.5,\scale*0.08) -- (\scale*6.5,\scale*0.08);
		\end{tikzpicture}  }
\end{minipage}
\caption{Illustration of a persistence module as in~\Cref{ex:rectanglenonadditivity} for~$|I|=2$. \emph{Left:} Let $v=(4,4).$ All shifted minimal generators have pairwise incomparable degrees but can be generated by the all-one vector in the degree of the greatest common divisor of their degrees. Hence, $\dim_v(\oplus_i(M_i))=1$. \emph{Right:} The functions $\sum_i\dim_{\tau v}(M_i)$ (in lavender) and  $\dim_{\tau v}(\oplus_i(M_i))$ (in blue) for $v=(1,1).$  
	We have $\Loc_v ( \{ M_i\}_{i \in I}=[3,4.3)$ and $\err_{v,p}=(0.6\cdot 2^p+0.7\cdot 3^p)^{1/p}.$}
\label{fig:rectanglenonadditivity}
\end{figure}
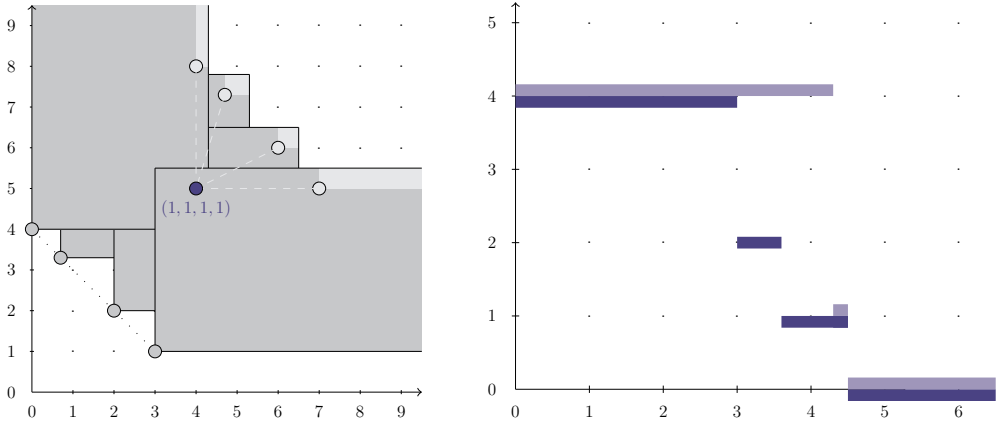
The following example demonstrates that the difference between $\dim_v(\oplus_{i\in I}M_i)$ and $\sum_{i\in I}\dim_v(M_i)$ can be arbitrarily large. In fact, the shift-dimension of a module can be~$1$ while the Hilbert function and the rank invariant attain arbitrarily large values.
\begin{example}\label{ex:rectanglenonadditivity}
Let $M_1$ and $M_2$ be as in~\Cref{exnonaddloc}. Define a  finite collection of interval modules $\{M_i\}_{i\in I}$ such that each $M_i$ has one generator $\gen_i$ and gets quotiented out by two elements $\widetilde{\gen}_i,\, \widehat{\gen}_i$ that are of the following form. The  $\{\gen_i\}_{i\in I}$ are assumed to have pairwise distinct degrees which lie on the straight line between $\deg(\gen_1)$ and $\deg(\gen_2).$ The degrees of $\widetilde{\gen_i}$ and $\widehat{\gen_i}$ are chosen such that $\deg(\widetilde{\gen}_i),\deg(\hat{\gen}_i)\nleq \deg(v\ast \gen_i)$ and for all $j\neq i$ either $\deg(\widetilde{\gen}_i)\leq \deg(v\ast\gen_j)$ or $\deg(\widehat{\gen}_i) \leq \deg(v\ast\gen_j).$ Then, again, the all-one vector in degree $\operatorname{lcm}(\deg(\gen_1),\deg(\gen_2))$ is a divisor of each $v\ast (0, \ldots,\gen_i,\ldots,0). $ 
Hence,  $\dim_v(\bigoplus_{\{1,2\}\cup I} M_i)=1\neq 2+|I|=\sum_{\{1,2\}\cup I}\text{dim}_v(M_i).$ 
See~\Cref{fig:rectanglenonadditivity} for an illustration of one concrete example.
\end{example}

Note that $\Loc_v (\{M_i\}_{i \in I})$ is in general not connected. Counterexamples can be constructed by taking direct sums of certain modules as in the examples above.

We give a further, more involved example for which additivity does not hold true.

\begin{figure}[h]
	\centering
	\begin{minipage}[t]{0.47\textwidth}
		\scalebox{.65}{
			\begin{tikzpicture}[scale=0.6, transform shape]
			\tikzstyle{grid lines}=[lightgray,line width=0]
			\foreach \r in {0,1,..., 15}
			\draw (\r,0) node[inner sep=.5pt,below=9pt,rectangle,fill=white] {\fontsize{13}{13}$\r$};
			\foreach \r in {0,1, 2,...,13}
			\draw (0,\r) node[inner sep=1pt,left=9pt,rectangle,fill=white] {\fontsize{13}{13}$\r$};
			\tikzstyle{every node}=[circle, draw, fill=niceblue, inner sep=1pt, minimum width=12pt] node{};
			\foreach \r in {1,...,15}
			\foreach \s in {1,...,13}
			\node[fill=none,draw=none] at (\r,\s) {.};
			\fill[\modulegray] (0,8) rectangle (15.5,13.5);
			\fill[\modulegray] (6,4) rectangle (15.5,13.5);
			\fill[\modulegray] (4,6) rectangle (15.5,13.5);
			\fill[\modulegray] (8,2) rectangle (15.5,13.5);
			\fill[\modulegray] (11,0) rectangle (15.5,13.5);
			\draw[black,->] (0,0)  -- (0,13.5) ;
			\draw[black,->]  (0,0)  -- (15.5,0) ;
			\foreach \r in {1,...,15} 
			\draw[black] (\r,-0.05) -- (\r,0.05);
			\foreach \r in {1,...,13} 
			\draw[black] (-0.05,\r) -- (0.05,\r);
			\fill[\shiftgray] (4,12) rectangle (15.5,13.5);
			\fill[\shiftgray] (10,8) rectangle (15.5,13.5);
			\fill[\shiftgray] (8,10) rectangle (15.5,13.5);
			\fill[\shiftgray] (12,6) rectangle (15.5,13.5);
			\fill[\shiftgray] (15,4) rectangle (15.5,13.5);
			\draw[black] (0,8)  -- (4,8) ;
			\draw[black] (4,8)  -- (4,6) ;
			\draw[black] (4,6)  -- (6,6) ;
			\draw[black] (6,6)  -- (6,4) ;
			\draw[black] (6,4)  -- (8,4) ;
			\draw[black] (8,4)  -- (8,2) ;
			\draw[black] (8,2)  -- (11,2) ;
			\draw[black] (11,2)  -- (11,0) ;				
			\draw[\shiftgray] (4,12)  -- (4,13.5) ;
			\draw[\shiftgray] (4,12)  -- (8,12) ;
			\draw[\shiftgray] (8,10)  -- (8,12) ;
			\draw[\shiftgray] (8,10)  -- (10,10) ;
			\draw[\shiftgray] (10,8)  -- (10,10) ;
			\draw[\shiftgray] (10,8)  -- (12,8) ;
			\draw[\shiftgray] (12,8)  -- (12,6) ;
			\draw[\shiftgray] (12,6)  -- (15,6) ;
			\draw[\shiftgray] (15,4)  -- (15,6) ;
			\draw[\shiftgray] (15,4)  -- (15.5,4);
			\draw (4,12) node[fill=\shiftgray]{};
			\draw (10,8) node[fill=\shiftgray]{};
			\draw (8,10) node[fill=\shiftgray]{};
			\draw (12,6) node[fill=\shiftgray]{};
			\draw (15,4) node[fill=\shiftgray]{};
			\draw (0,8) node[fill=\modulegray]{};
			\draw (6,4) node[fill=\modulegray]{};
			\draw (4,6) node[fill=\modulegray]{};
			\draw (8,2) node[fill=\modulegray]{};
			\draw (11,0) node[fill=\modulegray]{};
			\draw[niceblue!90,line width=\goodwith] (8,4)  -- (8,11.5) ;
			\draw[niceblue!90,line width=\goodwith] (8,4)  -- (15.5,4) ;
			\draw[niceblue!90,line width=\goodwith] (3.5,11.5)  -- (3.5,13.5) ;
			\draw[niceblue!90,line width=\goodwith] (3.5,11.5)  -- (8,11.5) ;
			\draw (8,4) node[fill=niceblue!90]{};
			\draw (3.5,11.5) node[fill=niceblue!90]{};
			\draw[\goodred,line width=\goodwith] (4,6)  -- (4,13.5) ;
			\draw[\goodred,line width=\goodwith] (4,6)  -- (13,6) ;
			\draw[\goodred,line width=\goodwith] (13,4)  -- (13,6) ;
			\draw[\goodred,line width=0.7*\goodwith] (13,3.93)  -- (15.5,3.93) ;
			\draw (4,6) node[fill=\goodred]{};
			\draw (13,4) node[fill=\goodred]{};
			\end{tikzpicture}  }
	\end{minipage}
	\begin{minipage}[t]{0.05\textwidth}
	\end{minipage}
	\begin{minipage}[t]{0.45\textwidth}
		\scalebox{.65}{
			\begin{tikzpicture}[scale=0.6, transform shape]
			\tikzstyle{grid lines}=[lightgray,line width=0]
			\foreach \r in {0,1,..., 15}
			\draw (\r,0) node[inner sep=.5pt,below=9pt,rectangle,fill=white] {\fontsize{13}{13}$\r$};
			\foreach \r in {0,1, 2,...,13}
			\draw (0,\r) node[inner sep=1pt,left=9pt,rectangle,fill=white] {\fontsize{13}{13}$\r$};
			\tikzstyle{every node}=[circle, draw, fill=blue, inner sep=1pt, minimum width=12pt] node{};
			\foreach \r in {1,...,15}
			\foreach \s in {1,...,13}
			\node[fill=none,draw=none] at (\r,\s) {.};
			\foreach \r in {1,...,15} 
			\draw[black] (\r,-0.05) -- (\r,0.05);
			\foreach \r in {1,...,13} 
			\draw[black] (-0.05,\r) -- (0.05,\r);
			\fill[\modulegray] (0,8) rectangle (15.5,13.5);
			\fill[\modulegray] (6,4) rectangle (15.5,13.5);
			\fill[\modulegray] (4,6) rectangle (15.5,13.5);
			\fill[\modulegray] (8,2) rectangle (15.5,13.5);
			\fill[\modulegray] (11,0) rectangle (15.5,13.5);
			\fill[\shiftgray] (4,12) rectangle (15.5,13.5);
			\fill[\shiftgray] (10,8) rectangle (15.5,13.5);
			\fill[\shiftgray] (8,10) rectangle (15.5,13.5);
			\fill[\shiftgray] (12,6) rectangle (15.5,13.5);
			\fill[\shiftgray] (15,4) rectangle (15.5,13.5);
			\draw[black] (0,8)  -- (4,8) ;
			\draw[black] (4,8)  -- (4,6) ;
			\draw[black] (4,6)  -- (6,6) ;
			\draw[black] (6,6)  -- (6,4) ;
			\draw[black] (6,4)  -- (8,4) ;
			\draw[black] (8,4)  -- (8,2) ;
			\draw[black] (8,2)  -- (11,2) ;
			\draw[black] (11,2)  -- (11,0) ;		
			\fill[niceblue!90] (0,8) rectangle (4,12);
			\fill[white!20!niceblue] (3.98,6) rectangle (4.02,8);
			\fill[\goodred] (8,2) rectangle (15,4);
			\fill[\goodred] (11,0) rectangle (15,4);
			\fill[\goodred] (6,3.98) rectangle (8,4.02);
			\draw[black,->] (0,0)  -- (0,13.5) ;
			\draw[black,->]  (0,0)  -- (15.5,0) ;
			\draw (6,4) node[fill=\goodred]{};
			\draw (4,6) node[shading=true,left color=white!20!niceblue,right color=\goodred]{};
			\draw[\shiftgray] (4,12)  -- (4,13.5) ;
			\draw[\shiftgray] (4,12)  -- (8,12) ;
			\draw[\shiftgray] (8,10)  -- (8,12) ;
			\draw[\shiftgray] (8,10)  -- (10,10) ;
			\draw[\shiftgray] (10,8)  -- (10,10) ;
			\draw[\shiftgray] (10,8)  -- (12,8) ;
			\draw[\shiftgray] (12,8)  -- (12,6) ;
			\draw[\shiftgray] (12,6)  -- (15,6) ;
			\draw[\shiftgray] (15,4)  -- (15,6) ;
			\draw[\shiftgray] (15,4)  -- (15.5,4);
			\draw (4,12) node[fill=\shiftgray]{};
			\draw (10,8) node[fill=\shiftgray]{};
			\draw (8,10) node[fill=\shiftgray]{};
			\draw (12,6) node[fill=\shiftgray]{};
			\draw (15,4) node[fill=\goodred]{};
			\draw[niceblue!90,line width=\goodwith] (4,6)  -- (4,12) ;
			\draw[\goodred,line width=\goodwith] (6,4)  -- (8,4) ;
			\draw[\goodred,line width=\goodwith] (11,0)  -- (11,2) ;
			\draw[\goodred,line width=\goodwith] (8,2)  -- (11,2) ;
			\draw (0,8) node[fill=niceblue]{};
			\draw (8,4) node[fill=niceblue!90]{};
			\draw (8,2) node[fill=\goodred]{};
			\draw (11,0) node[fill=\goodred]{};
			\draw (6,4) node[fill=\goodred]{};
			\draw (4,12) node[fill=niceblue!90]{};
			\draw (4,6) node[shading=true,left color=white!20!niceblue,right color=\goodred]{};
			\draw (8,4) node[shading=true,left color=white!20!niceblue,right color=\goodred]{};
			\end{tikzpicture}  }
	\end{minipage}
	\caption{In gray: the interval module $M$ of \Cref{example intervalmod}. In lighter gray: \mbox{$(4,4)\ast M.$} {\em Left:} 
		In blue and lavender: the submodules generated by the bases $\basis_1$ and~$\basis_2,$ resp. {\em Right:}  Regions in which the basis elements can be exchanged, fixing  $(8,4)$ and $(4,6),$ resp., following~\Cref{lemma:basis_exchange}. Using this lemma again, both bases can be exchanged with $\basis_3\coloneqq \{(4,6),(6,4)\}.$ Those replacements do not preserve the indispensability relations, since $(4,6)$ is indispensable only for ${(0,8)}$ and $(6,4)$ is indispensable only for $(11,0).$}
	\label{figure matroid}
\end{figure}
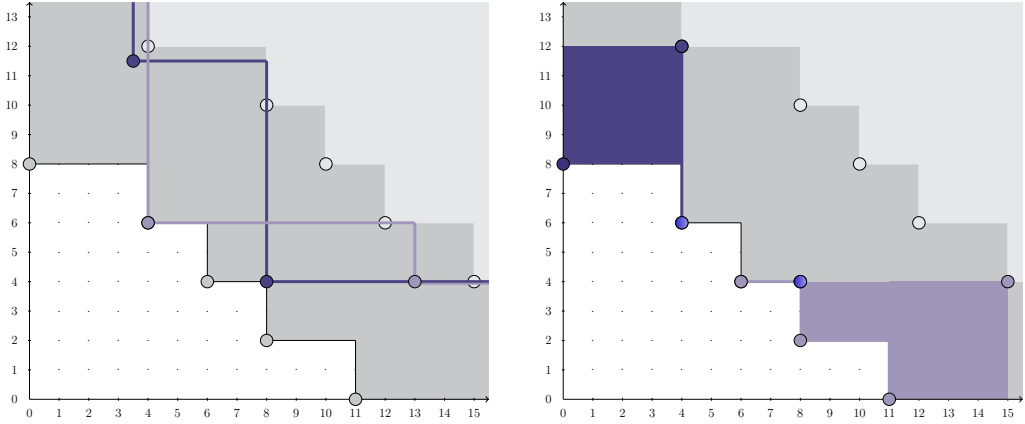

\begin{example}[\Cref{ex:indecomposable} revisited]
Let $M_1$ be the persistence module from \Cref{ex:indecomposable} and $M_2$ be an interval module generated at degrees $(0,5)$ and $(5,1.5)$ and quotiented out at degrees $(2,6)$  and $(9,1.5).$ 
Let $v=(2,1).$ 
Then, 
$$ \dim_{\tau v}(M_1) \,=\,\begin{cases}
5 & \text{if }0\leq\tau<1,\\
2 & \text{if }1\leq\tau<2,\\
0 & \text{if } 2 \leq \tau ,
\end{cases} \qquad \dim_{\tau v}(M_2) \,=\, \begin{cases}
2 & \text{if }0\leq\tau<1, \\
1 & \text{if }1\leq\tau<2,\\
0 & \text{if }2\leq\tau,
\end{cases}$$
and
$$\dim_{\tau v}(M_1\oplus M_2) \,=\, \begin{cases}
7 & \text{if }0\leq\tau<1,\\
3 & \text{if }1\leq\tau<1.5,\\
2 & \text{if }1.5\leq\tau<2,\\
0 & \text{if }2\leq\tau.
\end{cases}$$
We get $\Loc_v (\{M_1,M_2\})=[1.5,2)$ and $\err_{v,p}(\{M_1,M_2\})=0.5^{1/p}.$
\end{example}
Here, the non-triviality of the loci and $L^p$-errors of non-additivity is due to the existence of $v$-basis elements such that a suitable multiple of them yields a non-zero shifted generator in one direct summand and zero on the other direct summands.

\subsection{Basis exchange properties}
Unlike bases of vector spaces in linear algebra, \mbox{$v$-bases} do {\em not} give rise to a matroid if $\param>1.$ The following example demonstrates that even for monomial ideals, neither the well-desired basis exchange property holds, nor that for every set ot \mbox{$v$-generators}  there exists a subset which is a $v$-basis of $M.$ 
\begin{example}\label{example intervalmod}
Let $M$ be the interval module generated at degrees $(0,8),$ $(4,6),$ $(6,4),$ $(8,2),$ and $(11,0).$ For $v=(4,4),$ $\dim_v(M)$=2. 
Both $\basis_1\coloneqq\{(3.5,11.5),(8,4)\}$ and $\basis_2\coloneqq\{(4,12),(8,4)\}$ are $v$-bases of $M.$ The basis elements cannot be exchanged, as $(3.5,11.5),(4,6)\not\leq (4,4)+(11,0)$ and $(4,12),(13,4)\not\leq (4,4)+(8,2)$ (cf. \Cref{figure matroid}).  Taking the shifts of all $5$ minimal generators by $v$  yields a set of $v$-generators of~$M.$ However, none of its proper subsets is a set of \mbox{$v$-generators} of $M.$
\end{example}

We now investigate how to modify elements of a given $v$-basis by multiplication with suitable elements of the monoid. Let $m\in M$ and $\basis $ be a $v$-basis of $M.$ If for an  element $b$ of $\basis$ we have that $v\ast m\notin \langle \basis \setminus\{b\}\rangle ,$ we call $b$ {\em indispensable} for $m.$ Otherwise, we call $b$ {\em dispensable} for $m.$ 

\begin{lemma}\label{lemma:basis_exchange}
Let $\basis$ be a $v$-basis of a persistence module $M.$ Let $b\in\basis.$ Let $\{\gen_i\}_{i\in I}$ be a minimal set of homogeneous generators of~$M.$ Then the following holds.
\begin{enumerate}[(i)]
\item If there exists $\tilde{b}\in M$ and $r\in \field[\Rnn^\param]$ such that $r\tilde{b}=b,$ then $(\basis\setminus\{b\})\cup\{\tilde{b}\}$ is a $v$-basis of $M.$
\item Let $J\subseteq I$ be the set of all $j\in I$ s.t.\ $b$ is indispensable for $\gen_j.$ Let $w\in \Rnn^\param$ s.t.\  $\deg(w\ast b)\leq \gcd(\{\deg(v\ast\gen_j)\mid j\in J\}).$ Then  $(\basis\setminus\{b\})\cup\{w\ast  b\}$ is a $v$-basis~of~$M.$
\end{enumerate}
\end{lemma}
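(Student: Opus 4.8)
The plan is to prove both parts essentially by unwinding the definition of a $v$-basis, namely that $\mathcal{B}$ is a minimal set of elements with $v\ast(M/\langle\mathcal{B}\rangle)=0$. For part (i), first I would observe that since $b=r\tilde{b}\in\langle\tilde{b}\rangle$, we have $\langle(\mathcal{B}\setminus\{b\})\cup\{\tilde{b}\}\rangle\supseteq\langle\mathcal{B}\rangle$, so $v\ast(M/\langle(\mathcal{B}\setminus\{b\})\cup\{\tilde{b}\}\rangle)$ is a quotient of $v\ast(M/\langle\mathcal{B}\rangle)=0$, hence zero; thus $(\mathcal{B}\setminus\{b\})\cup\{\tilde{b}\}$ is a set of $v$-generators. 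It has cardinality at most $|\mathcal{B}|=\dim_v(M)$, and since $\dim_v(M)$ is by definition the smallest cardinality of a set of $v$-generators, the cardinality is exactly $\dim_v(M)$ and the set is a $v$-basis. (One should note $\tilde b$ need not be homogeneous a priori, but one can replace it by a suitable homogeneous component; alternatively, in the intended application $b,\tilde b$ are homogeneous.)

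For part (ii), the strategy is to show that replacing $b$ by $w\ast b$ still yields a set of $v$-generators, then conclude minimality exactly as in part (i) (the cardinality is unchanged). So the core task is: for every homogeneous generator $\gen_j$ of $M$, show that $v\ast\gen_j\in\langle(\mathcal{B}\setminus\{b\})\cup\{w\ast b\}\rangle$. Since $\{\gen_i\}_{i\in I}$ generates $M$ and the submodule generated by a set of $v$-generators contains $v\ast M$ (equivalently $v\ast\gen_i$ for all $i$ forces $v\ast M\subseteq$ that submodule), it suffices to check this on the $\gen_j$. Split into two cases according to whether $j\in J$ or not. If $j\notin J$, then $b$ is dispensable for $\gen_j$, meaning $v\ast\gen_j\in\langle\mathcal{B}\setminus\{b\}\rangle\subseteq\langle(\mathcal{B}\setminus\{b\})\cup\{w\ast b\}\rangle$, and we are done. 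If $j\in J$, we use the degree hypothesis: since $v\ast\gen_j\in\langle\mathcal{B}\rangle=\langle\mathcal{B}\setminus\{b\}\rangle+\langle b\rangle$, write $v\ast\gen_j=\eta+\lambda\ast b$ with $\eta\in\langle\mathcal{B}\setminus\{b\}\rangle$ homogeneous of degree $\deg(v\ast\gen_j)$ and $\lambda\in\field[\Rnn^\param]$ homogeneous with $\deg(\lambda)+\deg(b)=\deg(v\ast\gen_j)$. Because $\deg(w\ast b)=\deg(w)+\deg(b)\leq\gcd(\{\deg(v\ast\gen_j):j\in J\})\leq\deg(v\ast\gen_j)$, the element $\deg(\lambda)-\deg(w)=\deg(v\ast\gen_j)-\deg(b)-\deg(w)$ lies in $\Rnn^\param$ (all coordinates nonnegative), so $\lambda=\mu\ast w$ for some homogeneous $\mu\in\field[\Rnn^\param]$; hence $\lambda\ast b=\mu\ast(w\ast b)\in\langle w\ast b\rangle$ and $v\ast\gen_j=\eta+\mu\ast(w\ast b)\in\langle(\mathcal{B}\setminus\{b\})\cup\{w\ast b\}\rangle$.

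Having shown $(\mathcal{B}\setminus\{b\})\cup\{w\ast b\}$ $v$-generates $M$, minimality is immediate: its cardinality is $\le|\mathcal{B}|=\dim_v(M)$, and no set of $v$-generators can be smaller, so it is a $v$-basis.

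I expect the main obstacle to be the case $j\in J$ in part (ii): one must be careful that the decomposition $v\ast\gen_j=\eta+\lambda\ast b$ can be taken homogeneously (which is fine since everything is $\Rnn^\param$-graded and $v\ast\gen_j$ is homogeneous, so one passes to the appropriate graded component), and that the monoid ring $\field[\Rnn^\param]$ has the divisibility property used — namely that a homogeneous element $\lambda$ of degree $\geq\deg(w)$ componentwise is divisible by $w$ in the monoid $\Rnn^\param$. This is exactly why the hypothesis is phrased with $\gcd$: it guarantees $\deg(w\ast b)\leq\deg(v\ast\gen_j)$ simultaneously for all $j\in J$, which is the only thing needed to run the argument uniformly. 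A secondary subtlety is making precise the claim that "a set of $v$-generators generates a submodule containing $v\ast M$", which follows because $v\ast(M/\langle\mathcal{B}'\rangle)=0$ means $v\ast m\in\langle\mathcal{B}'\rangle$ for every $m\in M$.
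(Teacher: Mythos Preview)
Your proof is correct and follows essentially the same approach as the paper: for (i) you show the new set generates a submodule containing $\langle\mathcal{B}\rangle$ and conclude by the cardinality argument, and for (ii) you split into the dispensable/indispensable cases and use the degree inequality $\deg(w\ast b)\leq\deg(v\ast\gen_j)$ to rewrite the $b$-contribution through $w\ast b$. Your version is in fact more explicit than the paper's in the indispensable case, spelling out the homogeneous decomposition $v\ast\gen_j=\eta+\lambda\ast b$ and the divisibility $\lambda=\mu\ast w$, where the paper simply asserts that $v\ast\gen_j$ can be written as a linear combination over $(\mathcal{B}\setminus\{b\})\cup\{w\ast b\}$ because of the degree bound.
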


\begin{proof}
Let $\tilde{b}$ be as in the assumption of $(i).$
Then $(\basis\setminus\{b\})\cup\{\tilde{b}\}$ is a set of \mbox{$v$-generators} as it generates $\langle \basis \rangle $ by construction.
Since it has the same cardinality as $\basis,$ it is indeed a $v$-basis.
For the proof of the second statement, let $w$ be as desired. We prove that for each $\gen\in\{\gen_i\}_{i\in I}$ we have $v\ast\gen\in \langle (\basis\setminus\{b\})\cup\{w\ast  b\}\rangle .$ If $b$ is dispensable for~$\gen,$ then $v\ast\gen\in \langle \basis\setminus\{b\} \rangle\subseteq \langle (\basis\setminus\{b\})\cup\{w\ast  b\}\rangle .$ 
If $b$ is indispensable for~$\gen,$ then $v\ast\gen$ can be expressed as linear combination of elements of $(\basis\setminus\{b\})\cup\{w\ast  b\},$ since  $\deg(w\ast b)\leq\deg(v\ast\gen).$
\end{proof}

Given $b\in \basis,$ there is---up to constants in the field $\field$---a unique maximal multiple of~$b$ by which one can replace it, but there might be several incomparable such choices for divisors of $b.$
If one replaces a $v$-basis element by a multiple  or by a divisor  of it as described in the lemma above, it is important to note that the indispensability relations of the other $v$-basis elements with the minimal generators of $M$ might change.
Indispensability therefore encodes the combinatorial complexity of $v$-bases. 

\begin{corollary}\label{corollary:basis_exchange}
Let $M$ be an interval module, $\mathfrak{G}$ a minimal set of homogeneous generators of $M,$ and $\basis$ a $v$-basis of $M.$ Then the following statements are true.
\begin{enumerate}[(i)]
\item There exists a $v$-basis consisting of minimal generators of $M.$
\item If $b\in\basis$ is indispensable for a $\gen\in\mathfrak{G},$ then $[v\ast\gen]=0\in M/\langle b \rangle.$
\end{enumerate}
\end{corollary}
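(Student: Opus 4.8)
The plan is to deduce both parts from \Cref{lemma:basis_exchange} together with the one structural feature of interval modules that is needed here: every graded component $M_g$ is at most one-dimensional over~$\field$.

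For part~(i), I would start from an arbitrary $v$-basis $\basis$ of $M$ and push its elements, one at a time, down onto minimal generators. Fix $b\in\basis$, homogeneous of degree~$d$; then $b\neq0$, since otherwise $\basis\setminus\{b\}$ would already be a set of $v$-generators, contradicting minimality. As $\mathfrak{G}$ generates $M$ and $b\neq0$, writing $b$ as a combination of elements of $\mathfrak{G}$ and comparing degrees produces some $\gen\in\mathfrak{G}$ with $\deg(\gen)\leq d$ whose image in $M_d$ is nonzero; since $\dim_\field M_d\leq1$, that image is a nonzero scalar multiple of~$b$, so after rescaling $\gen$ there is $r\in\field[\Rnn^\param]$ with $r\gen=b$. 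Hence \Cref{lemma:basis_exchange}(i) applies with $\tilde{b}=\gen$ and shows that $(\basis\setminus\{b\})\cup\{\gen\}$ is again a $v$-basis of $M$, in particular still of cardinality $\dim_v(M)$; thus $\gen\notin\basis\setminus\{b\}$, and the number of elements of the current basis that are minimal generators has strictly increased. Iterating at most $\dim_v(M)$ times yields a $v$-basis all of whose elements are minimal generators of~$M$, proving~(i).

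For part~(ii), assume $b\in\basis$ is indispensable for $\gen\in\mathfrak{G}$, i.e., $v\ast\gen\notin\langle\basis\setminus\{b\}\rangle$, and show $v\ast\gen\in\langle b\rangle$. If $v\ast\gen=0$ the claim is trivial, so assume $v\ast\gen\neq0$ and set $g\coloneqq\deg(v\ast\gen)$, so that $M_g$ is one-dimensional over~$\field$. Since $\basis$ is a $v$-basis, $v\ast\gen\in\langle\basis\rangle=\langle b\rangle+\langle\basis\setminus\{b\}\rangle$, so I can write $v\ast\gen=p+q$ with $p\in\langle b\rangle$ and $q\in\langle\basis\setminus\{b\}\rangle$, and by homogeneity of $v\ast\gen$ we may take $p,q\in M_g$. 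If $q\neq0$, then $q$ spans $M_g$, hence $v\ast\gen\in\field\cdot q\subseteq\langle\basis\setminus\{b\}\rangle$, contradicting indispensability of~$b$ for~$\gen$. Therefore $q=0$ and $v\ast\gen=p\in\langle b\rangle$, i.e., $[v\ast\gen]=0$ in $M/\langle b\rangle$.

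There is no deep obstacle here; the point requiring care is the bookkeeping in~(i). One must check that replacing $b$ by $\gen$ never collides with another basis element --- which is precisely ensured by the cardinality $|\basis|=\dim_v(M)$ being preserved under \Cref{lemma:basis_exchange}(i) --- and one must use that $M$ is generated by~$\mathfrak{G}$ (finite presentation of the interval module) to produce the dominating minimal generator~$\gen$. Part~(ii) is then immediate once one exploits that graded pieces of interval modules are at most one-dimensional.
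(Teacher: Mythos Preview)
Your proof is correct and follows essentially the same route as the paper. For (i), both you and the paper iterate \Cref{lemma:basis_exchange}(i) to replace each $b\in\basis$ by a minimal generator dividing it; you spell out explicitly why such a divisor exists (one-dimensionality of $M_d$), whereas the paper just records that in an interval module every homogeneous element is a multiple of some element of $\mathfrak{G}$. For (ii), the paper simply asserts that indispensability forces $v\ast\gen$ to be a multiple of $b$; your decomposition $v\ast\gen=p+q$ and the contradiction via $\dim_\field M_g\le1$ is the clean way to unpack that sentence. Your extra bookkeeping in (i)---checking that the replacement $\gen$ cannot already lie in $\basis\setminus\{b\}$ via the cardinality argument---is a point the paper's proof of \Cref{lemma:basis_exchange} leaves implicit, so it is worth having.
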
 
\begin{proof}
Since $M$ is an interval module, each homogeneous element is a multiple of~some element of $\mathfrak{G}.$ Using \Cref{lemma:basis_exchange} iteratively, replace each element of $\basis$ by a~respective element of $\mathfrak{G}.$   If $b\in\basis$ is indispensable for $\gen,$ then $v\ast\gen$ is a multiple of~$b.$ 
\end{proof}
Note that the reverse of the second statement does not hold true.

\subsection{Algorithm for interval modules}\label{section algo}
The software {\tt Topcat}~\cite{topcat} of G\"afvert provides tools for calculating stable ranks for arbitrary persistence modules and, in principle, is based on searching through the entire $B(M,\delta)$ to look for an element with the smallest~$\beta_0$. This search can be done more efficiently for interval modules in the two-parameter case, and we
now present its algorithm.
An implementation in {\tt C++} of our algorithm  is made available at \href{https://github.com/recorb/shiftdim_stablerank}{https://github.com/recorb/shiftdim\_stablerank}.

Let $v=(v_1,v_2)\in\mathbb{R}_{>0}^2,$ 
$M$ be an interval module, and $\mathfrak{G}\coloneqq\{\gen_i\}_{i\in I\subseteq \Rnn}$ be the generating curve of $M.$ Denote \mbox{$\deg(\gen_i)=(a_i,b_i).$} W.l.o.g.\ assume that the generators are listed in the following total order: $a_i<a_j$ or $b_i>b_j$ whenever~$i<j.$
\begin{algo}\label{algo}
\begin{enumerate}[{\em Step 1.}]
\item[]
\item   Cluster $\{ \mathfrak{g}_i \}_{i\in I_1}$ in increasing order as follows: Set $I_1\coloneqq I,$ $k\coloneqq 1.$ \\
While there exists $i\in I_k$ such that $v\ast \gen_i \neq 0,$ do:
\begin{enumerate}
\item Set $i_k $ to be the maximum of the set\\ $  \left\{ i\in I_k \mid \deg(\gen_{i})\leq\deg({v\ast\gen}_{j}) \text{ for all }j\in I_k \text{ s.t. } j< i \text{ and } 
v\ast\gen_{j}\neq 0 \right\}.$
\item Define $I_{k+1} \coloneqq I_k \setminus \{ i  \in I_k \mid \deg(g_{i_k}) \leq \deg(v \ast \mathfrak{g}_{i}) \}.$
\item Replace $k$ by $k+1$ and iterate.
\end{enumerate}
\item Set $\mathfrak{H}\coloneqq \{\gen_{i_1},\ldots,\gen_{i_\ell}\}$ and $\ell\coloneqq  	\lvert \mathfrak{H} \rvert.$
\item Return $\ell$ and $\mathfrak{H}.$
\end{enumerate}
\end{algo}

Note that the computation of the $i_k$ and $I_{k+1}$ can be carried out geometrically: to obtain $i_k,$ we project from $(\inf\{a_i\mid i\in {I_k},v\ast\gen_i\neq0\},\sup\{b_i\mid i\in {I_k},v\ast\gen_i\neq0\})+v$ down to the generating curve of $M.$ 
For the construction of~$I_{k+1},$ we project to the right from $\deg(\gen_{i_k})-v$ to the generating curve and hit a coordinate $(a,b).$ We remove all generators whose second coordinate is not strictly greater than $b$ from $I_k$ to obtain $I_{k+1}.$ See~\Cref{figure:algorithm} for an illustration of this geometric procedure. 
\begin{figure}
	\centering
	\begin{minipage}[t]{0.42\textwidth}
		\scalebox{.75}{
			\begin{tikzpicture}[scale=0.45, transform shape]
			\foreach \r in {0,1,..., 14}
			\draw (\r,0) node[inner sep=.5pt,below=9pt,rectangle,fill=white] {\fontsize{13}{13}$\r$};
			\foreach \r in {0,1, 2,...,13}
			\draw (0,\r) node[inner sep=1pt,left=9pt,rectangle,fill=white] {\fontsize{13}{13}$\r$};
			\tikzstyle{every node}=[circle, draw, fill=niceblue!90, inner sep=1pt, minimum width=12pt] node{};
			\draw[black,->] (0,0)  -- (0,13.5) ;
			\draw[black,->]  (0,0)  -- (14.5,0) ;
			\foreach \r in {1,...,14} 
			\draw[black] (\r,-0.05) -- (\r,0.05);
			\foreach \r in {1,...,13} 
			\draw[black] (-0.05,\r) -- (0.05,\r);
			\draw[loosely dotted] (2.5,0) -- (2.5,13.5);
			\draw[name path = A]  (5.5,9) parabola bend (4.5,9) (2.65,13.5);
			\draw[name path = A2]  (2.65,13.5) -- (2.65,13.5);
			\draw[name path = B] (5.5,9) -- (5.5,6);
			\draw[name path = C] (5.5,6) -- (7,6);
			\draw[name path = D] (7,4) -- (7,6);
			\draw[name path = E] (7,4) -- (8.7,4);
			\draw[name path = F] (8.7,4) parabola bend (14.5,2)  (14.5,2) ;
			\draw[loosely dotted] (0,1.9) -- (14.5,1.9);
			\draw[name path = G] (10,11) parabola bend (11,4)  (11,4) ;
			\draw[name path = H] (11,4) -- (14.5,4);
			\draw[name path = I] (10,11) --  (9,11) ;
			\draw[name path = J] (9,11) --  (9,13.5) ;
			\begin{pgfonlayer}{bg}
			\tikzfillbetween[of=A and J] {\modulegray};
			\tikzfillbetween[of=A2 and J] {\modulegray};
			\tikzfillbetween[of=B and J] {\modulegray};
			\tikzfillbetween[of=B and I] {\modulegray};
			\tikzfillbetween[of=C and J] {\modulegray};
			\tikzfillbetween[of=E and J] {\modulegray};
			\tikzfillbetween[of=E and G] {\modulegray};
			\tikzfillbetween[of=E and I] {\modulegray};
			\tikzfillbetween[of=E and H] {\modulegray};
			\tikzfillbetween[of=F and H] {\modulegray};
			\tikzfillbetween[of=D and G] {\modulegray};
			\tikzfillbetween[of=D and G] {\modulegray};
			\end{pgfonlayer}
			\fill[\modulegray] (9,4) rectangle (10.9,4.01);
			\fill[\modulegray] (7,5) rectangle (9,11);
			\draw[niceblue!90,thick] (2.5,13.8) -- (4,13.8);
			\draw[niceblue!90,thick] (4,13.8) -- (4,9.33);
			\draw[niceblue!90,thick] (2.5,7.83) -- (4,9.33);
			\draw[niceblue!90,thick] (2.5,7.83) -- (5.5,7.83);
			\draw[niceblue!90,thick] (5.5,7.83) -- (7,9.33);
			\draw[niceblue!90,thick] (7,4) -- (7,9.33);
			\draw[niceblue!90,thick] (7,4) -- (5.5,2.5);
			\draw[niceblue!90,thick] (11.6,2.5) -- (5.5,2.5);
			\draw[niceblue!90,thick] (11.6,2.5) -- (13.1,4);
			\draw[niceblue!90,thick] (13.1,2.13) -- (13.1,4);
			\draw[niceblue!90,thick] (13.1,2.13) -- (11.6,0.63);
			\draw[niceblue!90,thick] (14.5,0.63) -- (11.6,0.63);
			\draw (4,9.3) node[fill=niceblue!90]{};
			\draw (7,4) node[fill=niceblue!90]{};
			\draw (13.1,2.12) node[fill=niceblue!90]{};
			\draw[decorate, decoration={brace}]  (2,7.83) -- node[draw=none,fill=none,left=0.6ex] {\huge$I_1$}  (2,13.8);
			\draw[decorate, decoration={brace}]  (2,2.5) -- node[draw=none,fill=none,left=0.6ex] {\huge$I_2$}  (2,7.83);
			\draw[decorate, decoration={brace}]  (2,1.9) -- node[draw=none,fill=none,left=0.6ex] {\huge$I_3$}  (2,2.5);
			\end{tikzpicture}  }
	\end{minipage}
	\begin{minipage}[t]{0.42\textwidth}
		\scalebox{.75}{
			\begin{tikzpicture}[scale=0.45, transform shape]
			\foreach \r in {0,1,..., 14}
			\draw (\r,0) node[inner sep=.5pt,below=9pt,rectangle,fill=white] {\fontsize{13}{13}$\r$};
			\foreach \r in {0,1, 2,...,13}
			\draw (0,\r) node[inner sep=1pt,left=9pt,rectangle,fill=white] {\fontsize{13}{13}$\r$};
			\tikzstyle{every node}=[circle, draw, fill=niceblue, inner sep=1pt, minimum width=12pt] node{};
			\draw[black,->] (0,0)  -- (0,13.5) ;
			\draw[black,->]  (0,0)  -- (14.5,0) ;
						\foreach \r in {1,...,14} 
			\draw[black] (\r,-0.05) -- (\r,0.05);
			\foreach \r in {1,...,13} 
			\draw[black] (-0.05,\r) -- (0.05,\r);
			\draw[loosely dotted] (2.5,0) -- (2.5,13.5);
			\draw[name path = A]  (5.5,9) parabola bend (4.5,9) (2.65,13.5);
			\draw[name path = A2]  (2.65,13.5) -- (2.65,13.5);
			\draw[name path = B] (5.5,9) -- (5.5,6);
			\draw[name path = C] (5.5,6) -- (7,6);
			\draw[name path = D] (7,4) -- (7,6);
			\draw[name path = E] (7,4) -- (8.7,4);
			\draw[name path = F] (8.7,4) parabola bend (14.5,2)  (14.5,2) ;
			\draw[loosely dotted] (0,1.9) -- (14.5,1.9);
			\draw[name path = G] (10,11) parabola bend (11,4)  (11,4) ;
			\draw[name path = H] (11,4) -- (14.5,4);
			\draw[name path = I] (10,11) --  (9,11) ;
			\draw[name path = J] (9,11) --  (9,13.5) ;
			\begin{pgfonlayer}{bg}
			\tikzfillbetween[of=A and J] {fill=\modulegray};
			\tikzfillbetween[of=A2 and J] {fill=\modulegray};
			\tikzfillbetween[of=B and J] {fill=\modulegray};
			\tikzfillbetween[of=B and I] {fill=\modulegray};
			\tikzfillbetween[of=C and J] {fill=\modulegray};
			\tikzfillbetween[of=E and J] {fill=\modulegray};
			\tikzfillbetween[of=E and G] {fill=\modulegray};
			\tikzfillbetween[of=E and I] {fill=\modulegray};
			\tikzfillbetween[of=E and H] {fill=\modulegray};
			\tikzfillbetween[of=F and H] {fill=\modulegray};
			\tikzfillbetween[of=D and G] {fill=\modulegray};
			\tikzfillbetween[of=D and G] {fill=\modulegray};
			\end{pgfonlayer}
			\fill[\modulegray] (9,4) rectangle (10.9,4.01);
			\fill[\modulegray] (7,5) rectangle (9,11);
			\draw[\goodred,thick] (14.8,1.9) -- (14.8,3.5);
			\draw[\goodred,thick] (9.47,3.5) -- (14.8,3.5);
			\draw[\goodred,thick] (9.47,3.5) -- (7.97,2);
			\draw[\goodred,thick] (7.97,2) -- (7.97,4);
			\draw[\goodred,thick] (7.97,4) -- (9.47,5.5);
			\draw[\goodred,thick] (7,5.5) -- (9.47,5.5);
			\draw[\goodred,thick] (7,5.5) -- (5.5,4);
			\draw[\goodred,thick] (5.5,6) -- (5.5,4);
			\draw[\goodred,thick] (5.5,9) -- (7,10.5);
			\draw[\goodred,thick] (7,10.5) -- (3.42,10.5);
			\draw[\goodred,thick] (1.92,9) -- (3.42,10.5);
			\draw[\goodred,thick] (1.92,9) -- (1.92,13.5);
			\draw (9.47,3.5) node[fill=\goodred]{};
			\draw (7,5.5) node[fill=\goodred]{};
			\draw (3.42,10.5) node[fill=\goodred]{};
			\draw[black,decorate, decoration={brace}]  (14.8,1.5) -- node[draw=none,fill=none,below=0.6ex] {\huge$I_1$}  (8,1.5);
			\draw[black,decorate, decoration={brace}]  (8,1.5) -- node[draw=none,fill=none,below=0.6ex] {\huge$I_2$}  (5.5,1.5);
			\draw[black,decorate, decoration={brace}]  (5.5,1.5) -- node[draw=none,fill=none,below=0.6ex] {\huge$I_3$}  (2.5,1.5);
			\end{tikzpicture}  }
	\end{minipage}
	\caption{{\em Left:} Constructing a $(1.5,1.5)$-basis, using~\Cref{algo}, clustering from above. {\em Right:} Using the opposite version from below.}
	\label{figure:algorithm}
\end{figure}
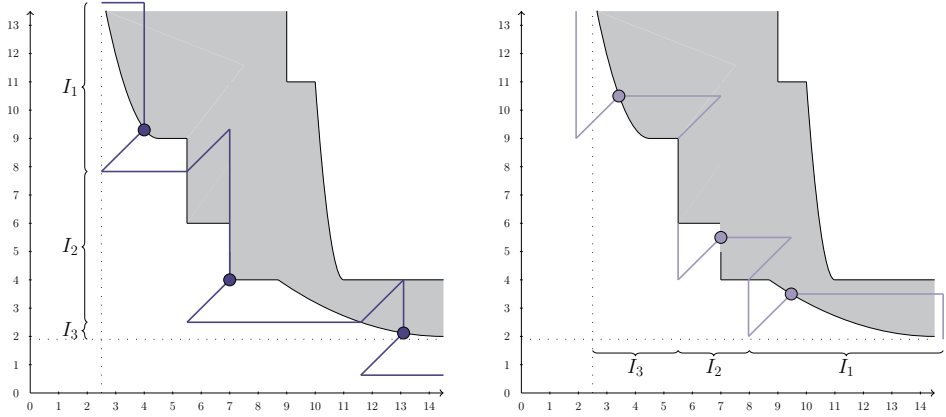

\begin{theorem}
\Cref{algo} computes $\dim_v(M)$ in at most $\lceil \frac{b_{i_1}-\inf\{b_i\mid i\in {I}\}}{v_2}\rceil$ iterations of Step~1.
\end{theorem}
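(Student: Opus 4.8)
The plan is to recognise \Cref{algo} as the greedy algorithm for a one–dimensional interval–covering problem, to prove optimality of greedy, and then to read off the iteration bound from the monotonicity of the pivots. Throughout I use the total order on $\mathfrak{G}=\{\gen_i\}$ fixed in the statement, for which $a_i<a_j$ and $b_i>b_j$ whenever $i<j$ (the minimal generators of a monomial ideal form an antichain in $\Rnn^2$). First I would invoke \Cref{corollary:basis_exchange}(i), so that $\dim_v(M)$ is the least cardinality of a subset $S\subseteq\mathfrak{G}$ that $v$-generates $M$. Put $U\coloneqq\{\gen\in\mathfrak{G}\mid v\ast\gen\neq0\}$. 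An interval module is a quotient of monomial ideals, so every structure map between two degrees of its support is an isomorphism; hence for nonzero homogeneous $h$ and $\gen\in U$ one has $v\ast\gen\in\langle h\rangle$ exactly when $\deg h\le\deg\gen+v$, and therefore $S$ $v$-generates $M$ if and only if every $\gen\in U$ satisfies $\deg\gen'\le\deg\gen+v$ for some $\gen'\in S$. In the fixed order, the covering block $\operatorname{Cov}(\gen_j)\coloneqq\{\gen_i\in U\mid a_i\ge a_j-v_1,\ b_i\ge b_j-v_2\}$ is a set of consecutive indices, since $a_i$ is increasing and $b_i$ decreasing; so $\dim_v(M)$ is the minimum number of such blocks covering $U$.

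Next I would check that \Cref{algo} is exactly greedy interval cover. Let $s_k$ denote the smallest index of $U\cap I_k$. I would verify that $i_k$ is the largest index of a generator whose covering block contains $s_k$, and that Step~1(b) deletes precisely $\operatorname{Cov}(\gen_{i_k})$ (together with some generators outside $U$, which is harmless). Three small facts enter: (a) ``$\gen_i$ covers $s_k$'' already implies ``$\gen_i$ covers every smaller-index element of $U\cap I_k$'', because an index $\ell'$ strictly between $s_k$ and $i$ has $b_{\ell'}>b_i$ while covering $s_k$ forces $b_i\le b_{s_k}+v_2$, and $b_{\ell'}<b_{s_k}$ is impossible (the first-coordinate condition is automatic); (b) no generator $\gen_p$ of index larger than $i_k$ covers $s_k$ — if $\gen_p\in I_k$ this contradicts the maximality defining $i_k$ via (a), and if $\gen_p$ was deleted at an earlier step $t$ then $\gen_{i_t}\le\gen_p+v$ while $\gen_{i_t}\not\le\gen_{s_k}+v$ (as $s_k\in I_{t+1}$), and chasing the two coordinates, using $v_1>0$ and the order, yields a contradiction; (c) hence $\mathfrak{H}=\{\gen_{i_1},\dots,\gen_{i_\ell}\}$ is the greedy cover. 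Then the standard exchange argument for interval cover applies — the leftmost-uncovered indices $s_1<\cdots<s_\ell$ cannot be pairwise covered by one generator, again by the maximality of the $i_k$ — so any cover of $U$ has at least $\ell$ elements. Combined with the fact that $\mathfrak{H}$ is a cover of size $\ell$, this gives $\ell=\dim_v(M)$ and that $\mathfrak{H}$, read inside $M$, is a $v$-basis.

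For the iteration count I would argue as follows. For each $k$, the pivot $i_{k+1}$ lies in $I_{k+1}=I_k\setminus\{i\mid\deg\gen_{i_k}\le\deg\gen_i+v\}$ and satisfies $i_{k+1}>i_k$: every element of $U\cap I_k$ of index $\le i_k$ is deleted at step $k$, so $s_{k+1}>i_k$, and $s_{k+1}$ lies in the pivot candidate set at step $k+1$, whence $i_{k+1}\ge s_{k+1}>i_k$. From $i_{k+1}>i_k$ we get $a_{i_{k+1}}+v_1>a_{i_k}$, so the failure of $\deg\gen_{i_k}\le\deg\gen_{i_{k+1}}+v$ must occur in the second coordinate, i.e.\ $b_{i_{k+1}}<b_{i_k}-v_2$. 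Iterating gives $b_{i_\ell}<b_{i_1}-(\ell-1)v_2$, and since $b_{i_\ell}\ge\inf\{b_i\mid i\in I\}$ we obtain $(\ell-1)v_2<b_{i_1}-\inf\{b_i\mid i\in I\}$, hence $\ell\le\bigl\lceil (b_{i_1}-\inf\{b_i\mid i\in I\})/v_2\bigr\rceil$. Since Step~1 is executed exactly $\ell$ times, this is the asserted bound.

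The main obstacle will be fact (b) together with a faithful translation of the bookkeeping in \Cref{algo} — the index sets $I_k$, the generators with $v\ast\gen_i=0$ that may persist into later $I_k$, and the pivot rule — into the clean greedy interval-cover procedure; in particular one must rule out that a previously deleted generator, or one outside $U$, could ever provide a further-reaching block than $\gen_{i_k}$, and this is exactly where $v_1>0$ and the staircase order are used. Once this dictionary is in place, correctness is just optimality of greedy for interval cover, and the iteration bound is the short computation above. Care is also needed to confirm that ``covers $U$'' and ``$v$-generates $M$'' are genuinely interchangeable, which rests on the interval-module structure and \Cref{corollary:basis_exchange}(i).
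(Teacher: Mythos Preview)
Your plan is correct and tracks the paper's proof closely: both invoke \Cref{corollary:basis_exchange}(i) to reduce $\dim_v(M)$ to a minimum-cover problem on $\mathfrak{G}$, verify that $\mathfrak{H}$ covers, argue optimality by showing any competing cover must use at least one generator per cluster $I_k\setminus I_{k+1}$ (your ``greedy interval cover'' is exactly the paper's inductive cluster comparison), and extract the iteration bound from the strict drop $b_{i_{k+1}}<b_{i_k}-v_2$. Two small repairs to your sketch: in (a) the assertion ``$b_{\ell'}<b_{s_k}$ is impossible'' is false (it \emph{does} hold since $\ell'>s_k$); the correct reason $\gen_i$ covers $\gen_{\ell'}$ is simply $\ell'<i\Rightarrow b_i<b_{\ell'}$ for the second coordinate and $a_i\le a_{s_k}+v_1\le a_{\ell'}+v_1$ for the first; and in (b), your coordinate chase for the ``deleted at step $t$'' case only closes once you note $s_k>i_t$ (which follows since all of $U\cap I_t$ with index $\le i_t$ is removed at step~$t$), after which $a_{i_t}<a_{s_k}$ forces the obstruction $\gen_{i_t}\not\le\gen_{s_k}+v$ into the second coordinate and yields $b_p>b_{s_k}$, contradicting $p>s_k$.
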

\begin{proof}
First note that by~\Cref{corollary:basis_exchange} $(i),$ there is a subset of $\mathfrak{G}$ that is a $v$-basis of $M.$
Hence, a minimal subset $\mathfrak{H} $ of $ \mathfrak{G}$ such that $v\ast \gen_i\in  \langle \mathfrak{H}\rangle$ for all $i\in I$ is indeed a $v$-basis. If $\mathfrak{H}=\emptyset,$ the statement is clear. Hence, let $\ell >0.$
Let us first argue that $\ell$ is finite. Since $v_1\neq0,$ $i_1$ is well-defined. For all $0<k<\ell,$ $b_{i_k} - b_{i_{k+1}}\geq v_2 >0.$ Since $b_i\geq 0$ for all $i\in I,$ if follows that $\ell$ is finite; in fact, $\ell<\lceil \frac{b_{i_1}-\inf\{b_i\mid i\in {I}\}}{v_2}\rceil<\lceil \frac{b_{i_1}}{v_2}\rceil.$ 

We now prove that $\ell = \dim_v(M)$ and $\mathfrak{H}$ is a $v$-basis of $M.$ 
We have $v\ast (M/\langle \mathfrak{H} \rangle )=0.$ Hence, $\dim_v(M) \leq \ell.$ For the reverse direction, assume that $\ell>0$ and let $\basis\coloneqq\{\mathfrak{b}_j\}_{j\in J}$ be a $v$-basis of $M.$  Applying~\Cref{lemma:basis_exchange} iteratively, we can w.l.o.g.\ assume that for each~$j,$ $\mathfrak{b}_j=\gen_i$ for some $i\in I.$ 
Hence, w.l.o.g.\ $\basis$ is a subset of $\mathfrak{G}$ that might differ from~$\mathfrak{H}.$  
However, $\basis$ clusters $I$ into $\dim_v(M)$ subsets;  the  cluster labeled by $j\in J$ consists of all $i\in I$ for which $\deg(\mathfrak{b}_j) \leq (\deg{v\ast \gen_i}).$ If two elements of $I$ both are elements of one such cluster, then each element in between must be in the same cluster as well.
By construction, for all $1\leq k \leq \ell$ there is no $i\in I\setminus({\bigcup_{1\leq j\leq k-1}{I_j}})$ such that there exists $\tilde{I_k}\supsetneq I_k$ with $\tilde{I_k}\cap(\bigcup_{1\leq j\leq k-1}{I_j})=\emptyset$ and $\langle \{v\ast\gen_j \mid j\in\tilde{I}_k\} \rangle \subseteq \langle\gen_i\rangle.$ 
Hence, the cluster given by $\basis$ which contains the smallest elements of $I$ must be a subset of~$I_1.$ By construction of $I_2,$ the second cluster given by $\basis$ can not be a proper superset of $I_1\cup I_2.$ Iteratively, we get that the cardinality of $\basis$ is at least $\ell.$
\end{proof}
Replacing the total order on the generators by the opposite total order yields another variant of the algorithm computing~$\dim_v(M).$ This yields the upper bound $\ell\leq \lceil \frac{a_{i_1}-\inf\{a_i\mid i\in {I}\}}{v_1}\rceil \leq \lceil \frac{a_{i_1}}{v_1}\rceil.$
The algorithm can be extended to the case $v_1=0,$ $v_2\neq0$ if $\sup\{b_i\mid i\in {I}\}<\infty$  and---using the opposite total order---to the case $v_1\neq0,$ $v_2=0$ if $\sup\{a_i\mid i\in {I}\}<\infty.$ If in these cases the suprema are not finite, $\dim_v(M)=\infty.$

For the complexity of the algorithm, it remains to investigate the complexity of Steps 1(a) and 1(b). 
By the geometric construction described above, we need to compute infima, suprema, add and subtract a vector, project to a curve, and check if certain segments of a curve get shifted beyond another curve. In general, computing the shift-dimension is NP-hard, as was mentioned in~\Cref{remark:L}. We are now going to prove that the computation is linear for tame interval modules.

\begin{corollary}\label{cor:lineartime}
If $M$ is a finitely presented interval module in the bivariate case, then $\dim_v(M)$ can be computed in $O(n)$ time, where $n\coloneqq \beta_0(M).$
\end{corollary}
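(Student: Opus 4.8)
The plan is to analyze the running time of \Cref{algo} directly, taking for granted — from the theorem just proved — that it terminates with output $\ell=\dim_v(M)$. I take $M$ to be presented by the sorted list of its $n$ minimal generator degrees $\deg(\gen_i)=(a_i,b_i)$ together with the relation degrees bounding the underlying interval; in the bivariate finitely presented case the latter also number $O(n)$, so the input has size $O(n)$, and the information needed to test $v\ast\gen_i\neq 0$ and to project onto the generating curve is available. The bound will follow from two facts: Step~1 is executed at most $n$ times, and each execution costs $O(1)$ amortized bookkeeping plus a contribution to two global pointer sweeps of total cost $O(n)$.

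For the iteration count: since $v\in\mathbb{R}_{>0}^2$ we have $\deg(\gen_{i_k})\leq\deg(\gen_{i_k})+v=\deg(v\ast\gen_{i_k})$, so $i_k$ itself lies in the block removed from $I_k$ in Step~1(b); hence the removed sets $C_k\coloneqq I_k\setminus I_{k+1}$ are nonempty and pairwise disjoint subsets of $I$, so $\ell\leq|I|=n$. (This sharpens the iteration bound of the preceding theorem to one stated purely in terms of $n$.) Since each generator leaves $I_k$ at most once, the total number of generators handled in all executions of Step~1(b) is $\sum_k|C_k|\leq n$.

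For the per-iteration cost I would use the staircase geometry. As $M$ is an interval module the $\deg(\gen_i)$ form an antichain, so in the prescribed order $i\mapsto a_i$ is strictly increasing and $i\mapsto b_i$ strictly decreasing; combining this with the inequalities $\deg(\gen_{i_k})\leq\deg(v\ast\gen_j)$ that hold, by the choice of $i_k$, for the smaller active indices $j$, one checks that $I_k$ remains a contiguous block of the sorted list (once the irrelevant generators with $v\ast\gen_i=0$ are skipped as they are met) and that $C_k$ is a prefix of it. Then the infimum and supremum needed in Step~1(a) are read off the advancing left endpoint of $I_k$ in $O(1)$ amortized time, and the two geometrically realized operations — projecting $(\inf\{a_i\},\sup\{b_i\})+v$ downward onto the generating curve to locate $\gen_{i_k}$, and projecting $\deg(\gen_{i_k})-v$ rightward onto it to delimit $I_{k+1}$ — have source points that move monotonically down-and-right in $k$, using $i_1<i_2<\cdots<i_\ell$ (which follows from $b_{i_k}-b_{i_{k+1}}\geq v_2$). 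Hence all these projections over the whole execution are served by a constant number of pointers that sweep once through the $O(n)$ linear pieces of the piecewise-linear staircase, for $O(n)$ in total. Putting this together with $\ell\leq n$ and $\sum_k|C_k|\leq n$ gives the claimed $O(n)$ bound. The step I expect to be the main obstacle is making this geometric bookkeeping rigorous: proving the monotonicity of $\gen_{i_k}$ and of the two projection feet, showing that the required extrema and all pointers only advance (in particular that the skipped $v\ast\gen_i=0$ generators never force re-scanning), and treating the degenerate directions $v_1=0$ or $v_2=0$ noted after the theorem — for it is precisely this monotonicity that replaces a naive $O(n\log n)$ by $O(n)$.
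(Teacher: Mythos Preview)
Your proposal is correct and follows essentially the same approach as the paper: bound the number of iterations of Step~1 by $n=\beta_0(M)$, and argue that each iteration reduces to a constant number of arithmetic operations and look-ups on the sorted generator list, with the $v\ast\gen_i=0$ tests handled in an $O(n)$ preprocessing pass. Your amortized pointer-sweep argument makes explicit what the paper states more tersely (``all the individual operations can be done in $O(1)$ time''), and your bound $\ell\leq n$ via the disjoint nonempty sets $C_k$ is a cleaner justification than the paper's appeal to finiteness of $\beta_0(M)$ as an ``additional upper bound.''
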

\begin{proof}
Each iteration of Step 1 boils down to carrying out two additions, determining three minima of a finite array of sorted data, and deciding whether ${v\ast\gen}_{j}=0$ only for a subset of minimal homogeneous generators. Deciding whether ${v\ast\gen}_{j}=0$ can again be done by finding the minimum of a finite array of sorted data. The total number of the latter operations can naively bounded by $\beta_0(M)$ and can hence be done as a preprocessing step of $\beta_0(M)$ operations. All the individual operations can be done in $O(1)$ time. 
Running the algorithm from above or below, the number of iterations of Step 1 is bounded by $\lceil \frac{a_{i_1}-\min\{a_i\mid i\in {I}\}}{v_1}\rceil$ or $\lceil \frac{b_{i_1}-\min\{b_i\mid i\in {I}\}}{v_2}\rceil,$ respectively. Since $\beta_0(M)$ is finite, it is an additional upper bound for the number of iterations of Step~1. 
\end{proof}

For some interval modules, such as monomial ideals, the module generated by $\mathfrak{H}$ has an illustrative description: it is a staircase with the minimal possible number of steps that fits between $M$ and $v*M.$  
This investigation might give additional ideas for a proof of an analogous algorithm in the setting of more than two parameters. Such a generalization would presumably increase the geometric complexity of the algorithm. 

The insights of this algorithm in combination with a better structural understanding of the examples in \Cref{section:locus} might give rise to an efficient way to compute the shift-dimension of direct sums of interval modules.

\section{The shift-dimension of multigraded $\field[x_1,\ldots,x_\param]$-modules}\label{section algebraic}
In this section, we explain how the shift-dimension for tame persistence modules translates to the monoid $G=\mathbb{N}^r.$ Identifying the monoid ring $\field[\mathbb{N}^r]$ with the polynomial ring $\field[x_1,\ldots,x_r]$ and hence translating $v\ast (\bullet)$ to multiplication by the monomial $x_1^{v_1}\cdots x_r^{v_r},$ the study of $\Mod^{\gr}(\field[G])$ is tantamount to the study of multigraded modules over the multivariate polynomial ring. Those modules play a  fundamental role in algebraic geometry.
We denote by $S\coloneqq \field[x_1,\ldots,x_\param]$ the polynomial ring in $\param$ variables over a field $\field$   with the standard $\NN^{\param}$-grading, i.e., the degree of $x_i$ is the $i$-th standard unit vector $\mathbf{e}_i \in \NN^{\param}.$ Motivated by the stabilization of $\beta_0$ with respect to the metric arising from the standard contour in the direction of a vector, we introduce the following invariant of multigraded $S$-modules.

\begin{definition}
	Let $M$ be an $\NN^\param$-graded $S$-module and $v\in \NN^\param.$ Elements $m_1,\ldots,m_k$ of $M$ {\em $v$-generate $M$}, if 
	$$ x_1^{v_1}\cdots x_\param^{v_\param} \cdot  M \, \subseteq \, \langle m_1,\ldots,m_k \rangle .$$ 
	A {\em $v$-basis of $M$} is a collection $\{m_1,\ldots,m_k\}$ of  elements  of $M$ that \mbox{$v$-generates} $M$ and for which $k$ is smallest possible. In this case,  we call $k$ the {\em $v$-dimension} of $M$ and denote it by $\dim_v (M).$
\end{definition}
Again, for finitely generated persistence modules, we stick to {\em homogeneous} generators and \mbox{$v$-bases}.  For fixed $v\in \NN^\param,$ we abbreviate $\dim_{nv}$ by $\dim_n$ and refer to it as {\em $n$-dimension}, and similarly for ``$n$-basis'' and ``$n$-generators''.
\begin{remark}
This definition generalizes to arbitrary rings $R.$ Let $M$ be an $R$-module and $r\in R.$ Elements $\{m_1,\ldots,m_k\}$ of $M$ {\em $r$-generate} $M$ if $rM\subseteq \langle m_1,\ldots,m_k \rangle.$
\end{remark}
Note that $\dim_0(M)$ is the minimal number of (homogeneous) generators of $M.$ Moreover, $\dim_n(M)$ is zero if and only if $(x_1^{v_1}\cdots x_\param^{v_\param})^n M$ is the zero-module.  

\begin{remark} By Hilbert's syzygy theorem, every finitely generated \mbox{$\NN^\param$-graded}\linebreak \mbox{$S$-module} has a minimal free resolution $F_{\bullet}$ of length at most $\param$ (see \cite[Proposition 8.18]{MillSturm}). The rank of $F_i$ is the {\em $i$-th total multigraded Betti number} of the module. It would be intriguing to stabilize higher total multigraded Betti numbers as well.
\end{remark}

The following proposition explains how to modify tame persistence modules to obtain a module in which every element can be extended to an $n$-basis. This construction works for the monoid ring $\field[\NN^r],$ but not for $\field[\Rnn^r].$
\begin{proposition}\label{prop:killnonbasiselementsN}
	Let $M$ be a finitely generated $\NN^\param$-graded $S$-module and $n\in \NN.$ Choose a non-zero element $m_1\in M$ that is not contained in any $n$-basis of $M$ and consider the quotient module $M/\langle m_1 \rangle.$ Then choose a non-zero element $[m_2]\in M/\langle m_1 \rangle$ that is not contained in any $n$-basis of $M/\langle m_1 \rangle,$ and repeat this process. There exists a natural number $\ell$ such that after $\ell$ iterations
$$ M \longrightarrow M/\langle m_1 \rangle \longrightarrow M/\langle m_1,m_2\rangle \longrightarrow \cdots \longrightarrow M/ \langle m_1,\ldots,m_\ell \rangle \, \eqqcolon\, \widetilde{M},$$ one arrives at a module $\widetilde{M}$ of the same $n$-dimension as $M$ in which every non-zero element can be extended to an $n$-basis of $\widetilde{M}.$ 
\end{proposition}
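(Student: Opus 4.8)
The plan is to reduce the statement to three essentially separate points: along the process the $n$-dimension never changes; the process terminates after finitely many steps; and the module one reaches has the claimed extension property — this last being, once the termination is in hand, little more than an unwinding of what ``the process stops'' means.

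\emph{Constancy of the $n$-dimension.} First I would note that \Cref{lemma:drop1} and the criterion immediately following it (a homogeneous element lies in some $v$-basis of a module $L$ precisely when passing to the quotient by it drops $\dim_v(L)$ by one) carry over verbatim to finitely generated $\NN^\param$-graded $S$-modules, since their proofs only lift homogeneous generators and count. Applying them with the vector $nv$ to $L_{i-1}\coloneqq M/\langle m_1,\dots,m_{i-1}\rangle$ and the element $m_i$, which by construction lies in no $n$-basis of $L_{i-1}$, forces $\dim_n(L_i)=\dim_n(L_{i-1})$; inductively $\dim_n(\widetilde M)=\dim_n(M)$, once we know that the process stops after finitely many steps.

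\emph{Termination.} Here the point is to look not for a monotone invariant of the quotients $L_i$ (for instance $\beta_0$ need not strictly decrease), but to work inside the fixed module $M$. Pick homogeneous lifts $\tilde m_i\in M$ and set $K_j\coloneqq\langle\tilde m_1,\dots,\tilde m_j\rangle\subseteq M$, so $L_j=M/K_j$. Since $m_j$ is a non-zero element of $L_{j-1}=M/K_{j-1}$, its lift satisfies $\tilde m_j\notin K_{j-1}$, whence $K_{j-1}\subsetneq K_j$. As $S=\field[x_1,\dots,x_\param]$ is Noetherian and $M$ is finitely generated, $M$ is a Noetherian $S$-module, so a strictly ascending chain of graded submodules must be finite; hence the process stops after some finite number $\ell$ of steps. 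This is precisely the step where one uses $G=\NN^\param$ rather than $\Rnn^\param$: the ring $\field[\Rnn^\param]$ is not Noetherian, the chain $K_1\subsetneq K_2\subsetneq\cdots$ may fail to stabilize, and in fact an infinite such process can be exhibited over $\field[\Rnn^\param]$.

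\emph{The terminal module.} When the process halts after $\ell$ steps, it halts for the sole reason that $\widetilde M=L_\ell$ has no non-zero (homogeneous) element lying outside every $n$-basis; equivalently, every non-zero element of $\widetilde M$ is contained in — hence can be extended to — some $n$-basis of $\widetilde M$, which is what is claimed, and by the first point $\dim_n(\widetilde M)=\dim_n(M)$. (In the degenerate case $\dim_n(M)=0$ the empty set is the only $n$-basis at every stage, so halting is only possible once $\widetilde M=0$, where the extension property is vacuous and $\dim_n(\widetilde M)=0=\dim_n(M)$ still holds.) I expect the main obstacle to be exactly the termination argument — recognizing that the quantity forced to stabilize is the ascending chain of kernels in $M$, so that Noetherianity of $\field[\NN^\param]$ is what makes the construction finite — the remaining care being only the bookkeeping with lifts and the handling of the $\dim_n=0$ case.
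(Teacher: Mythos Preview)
Your proof is correct and follows the same approach as the paper: the termination step invokes Noetherianity of $S$ together with finite generation of $M$, exactly as the paper does in its one-sentence proof. You have simply made explicit the two points the paper leaves to the reader --- that the $n$-dimension is preserved at each step (via \Cref{lemma:drop1} and the subsequent criterion) and that halting is equivalent to the claimed extension property --- which is appropriate, since the paper's proof consists solely of the termination observation.
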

\begin{proof}
 Since $M$ is finitely generated and $S$ is Noetherian, the procedure described in the proposition stabilizes after a finite number of steps.
\end{proof}

Note that the module $\widetilde{M}$ can in general {\em not} be obtained by successively quotienting out non-$n$-basis-elements of $M$ itself.  For instance, consider $M=\langle x_1,x_2\rangle$ and $v=\mathbbm{1}$ the all-one vector. The element $m_1\coloneqq x_1^5x_2^3$ is not in any $2$-basis of $M$ and $m_2\coloneqq x_1^3x_2^5$ is not in a $2$-basis of~$M,$ but $\{[m_2]\}$ is a $2$-basis of the module~$M/\langle m_1\rangle.$

\begin{remark}  Recall that for two ideals $I,J$ in a ring $R,$  the {\em ideal quotient} of $I$ by $J$
\mbox{$ (I:J)  \coloneqq \{r\in R \mid rJ \subseteq I \}  \triangleleft  R$}
is itself an ideal in $R.$   Let $M=I\triangleleft \field[x_1,\ldots,x_\param]$ be a homogeneous ideal, $v\in \mathbb{N}^r.$
Then for all $n\in \mathbb{N},$ the $n$-dimension of the ideal quotient $(I\colon (x_1^{v_1}\cdots x_\param^{v_\param})^n)$ bounds the minimal number of generators of $I$ from below.
\end{remark}

\begin{example}
	Let  $M$ be the monomial ideal $\langle x_1^2,x_2^3,x_3^5\rangle \triangleleft \CC [x_1,x_2,x_3]$ and $v=\mathbbm{1}.$ For $n\in \{0,1\},$ the three elements $x_1^2,x_2^3,x_3^5$ $n$-generate $M.$ For $n\geq 2,$ $M$ is $n$-generated by the single element $x_1^2.$ Therefore, the sequence $(\dim_n(M))_{n\in \NN}$ is $3,3,1,1,1,\ldots $
\end{example}

\begin{example}
	Let $I=\langle x_1x_2^4,x_1^3x_2^2,x_1^5x_2\rangle \triangleleft \field [x_1,x_2]$ and $v=\mathbbm{1}.$ Then $I^2$ is minimally generated by the $5$ elements $x_1^2x_2^8,x_1^4x_2^6,x_1^6x_2^4,x_1^8x_2^3,x_1^{10}x_2^2.$ The sequences of $n$-dimensions are $(\dim_n(I))_{n\in \mathbb{N}}=3,3,1,1,\ldots $ and $(\dim_n(I^2))_{n \in \mathbb{N}}=5,5,1,1,\ldots $
\end{example}

\begin{example}[Free multigraded module]
	Let $F=S(-a_1,\ldots,-a_\param)\cong S \cdot x_1^{a_1}\cdots x_\param^{a_\param}$ be a free multigraded module, where $a_1,\ldots,a_\param\in \NN.$
	One reads that $F$ is $n$-generated by the single element $x_1^{a_1+nv_1}\cdots x_\param^{a_\param +nv_\param}.$ Therefore $(\dim_n(F))_{n\in \NN}=1,1,1,\ldots$
\end{example}

\begin{example}[Quotient of two monomial ideals]\label{example monomials}
Let $M=\langle x_1^3x_2,x_1x_2^3\rangle / \langle x_1^4x_2^4 \rangle$ and $v=\mathbbm{1}.$ Then $(\dim_n(M))_{n\in \NN}=2,2,1,0,0,\ldots$ For a visualization, see~\Cref{figure monomials}.
\end{example}
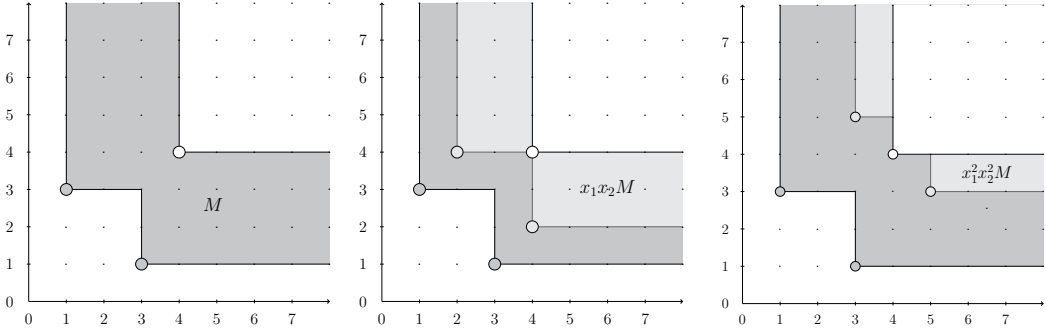
\begin{figure}
	\begin{minipage}[t]{0.30\textwidth}
		\scalebox{.45}{
			\begin{tikzpicture}[scale=1.1, transform shape]
			\tikzstyle{grid lines}=[lightgray,line width=0]
			\foreach \r in {0,..., 7}
			\draw (\r,0) node[inner sep=.5pt,below=9pt,rectangle,fill=white] {\fontsize{13}{13}$\r$};
			\foreach \r in {0,...,7}
			\draw (0,\r) node[inner sep=1pt,left=9pt,rectangle,fill=white] {\fontsize{13}{13}$\r$};
			\tikzstyle{every node}=[circle, draw, fill=black, inner sep=1pt, minimum width=9pt] node{};
			\fill[\modulegray] (1,3) rectangle (8,8);
			\fill[\modulegray] (3,1) rectangle (8,8);
			\fill[white] (4,4) rectangle (8,8);
			\foreach \r in {1,...,7} 
			\draw[black] (\r,-0.05) -- (\r,0.05);
			\foreach \r in {1,...,7} 
			\draw[black] (-0.05,\r) -- (0.05,\r);
			\foreach \r in {1,...,8}
			\foreach \s in {1,...,8}
			\node[fill=none,draw=none] at (\r,\s) {.};
			\draw[black] (1,3)  -- (1,8) ;
			\draw[black] (1,3)  -- (3,3) ;
			\draw[black] (3,1)  -- (3,3) ;
			\draw[black] (3,1)  -- (8,1) ;
			\draw[black] (4,4)  -- (4,8) ;
			\draw[black] (4,4)  -- (8,4) ;
			\draw[black,->] (0,0)  -- (0,8) ;
			\draw[black,->]  (0,0)  -- (8,0) ;		
			\node[rectangle,draw=none,fill=none,label=\Large$M$] at (4.9,2.15) {};
			\draw (1,3) node[fill=\modulegray]{};
			\draw (3,1) node[fill=\modulegray]{};
			\draw (4,4) node[fill=white]{};
			\end{tikzpicture}}
	\end{minipage}\hspace*{2mm}
	\begin{minipage}[t]{0.30\textwidth}
		\scalebox{.45}{
			\begin{tikzpicture}[scale=1.1, transform shape]
			\tikzstyle{grid lines}=[lightgray,line width=0]
			\foreach \r in {0,..., 7}
			\draw (\r,0) node[inner sep=.5pt,below=9pt,rectangle,fill=white] {\fontsize{13}{13}$\r$};
			\foreach \r in {0,...,7}
			\draw (0,\r) node[inner sep=1pt,left=9pt,rectangle,fill=white] {\fontsize{13}{13}$\r$};
			\tikzstyle{every node}=[circle, draw, fill=black, inner sep=1pt, minimum width=9pt] node{};
			\fill[\modulegray] (1,3) rectangle (8,8);
			\fill[\modulegray] (3,1) rectangle (8,8);
			\fill[\shiftgray] (4,2) rectangle (8,8);
			\fill[\shiftgray] (2,4) rectangle (8,8);
			\fill[white] (4,4) rectangle (8,8);
			\draw[black] (1,3)  -- (1,8) ;
			\draw[black] (1,3)  -- (3,3) ;
			\draw[black] (3,1)  -- (3,3) ;
			\draw[black] (3,1)  -- (8,1) ;
			\draw[black] (4,4)  -- (4,8) ;
			\draw[black] (4,4)  -- (8,4) ;
			\draw[black!60] (2,4)  -- (2,8) ;
			\draw[black!60] (2,4)  -- (4,4) ;
			\draw[black!60] (4,2)  -- (4,4) ;
			\draw[black!60] (4,2)  -- (8,2) ;
			\draw[black,->] (0,0)  -- (0,8) ;
			\draw[black,->]  (0,0)  -- (8,0) ;		
			\foreach \r in {1,...,7} 
			\draw[black] (\r,-0.05) -- (\r,0.05);
			\foreach \r in {1,...,7} 
			\draw[black] (-0.05,\r) -- (0.05,\r);
			\foreach \r in {1,...,8}
			\foreach \s in {1,...,8}
			\node[fill=none,draw=none] at (\r,\s) {.};
			\draw (1,3) node[fill=\modulegray]{};
			\draw (3,1) node[fill=\modulegray]{};
			\draw (4,4) node[fill=white]{};
			\draw (4,2) node[fill=\shiftgray]{};
			\draw (2,4) node[fill=\shiftgray]{};
			\node[rectangle,draw=none,fill=none,label=\Large$x_1x_2M$] at (6,2.15) {};
			\end{tikzpicture}}
	\end{minipage} \hspace*{2mm}
	\begin{minipage}[t]{0.30\textwidth}
		\scalebox{.45}{
			\begin{tikzpicture}[scale=1.1] 
			\tikzstyle{grid lines}=[lightgray,line width=0]
			\foreach \r in {0,..., 7}
			\draw (\r,0) node[inner sep=.5pt,below=9pt,rectangle,fill=white] {\fontsize{13}{13}$\r$};
			\foreach \r in {0,..., 7}
			\draw (0,\r) node[inner sep=1pt,left=9pt,rectangle,fill=white] {\fontsize{13}{13}$\r$};
			\tikzstyle{every node}=[circle, draw, fill=black, inner sep=1pt, minimum width=8pt] node{};
			\fill[\modulegray] (1,3) rectangle (8,8);
			\fill[\modulegray] (3,1) rectangle (8,8);
			\fill[\shiftgray] (3,5) rectangle (8,8);
			\fill[\shiftgray] (5,3) rectangle (8,8);
			\fill[white] (4,4) rectangle (8,8);
			\draw[black] (1,3)  -- (1,8) ;
			\draw[black] (1,3)  -- (3,3) ;
			\draw[black] (3,1)  -- (3,3) ;
			\draw[black] (3,1)  -- (8,1) ;
			\draw[black] (4,4)  -- (4,8) ;
			\draw[black] (4,4)  -- (8,4) ;
			\draw[black!60] (3,5)  -- (3,8) ;
			\draw[black!60] (3,5)  -- (4,5) ;
			\draw[black!60] (5,3)  -- (5,4) ;
			\draw[black!60] (5,3)  -- (8,3) ;
			\draw[black,->] (0,0)  -- (0,8) ;
			\draw[black,->]  (0,0)  -- (8,0) ;		
			\foreach \r in {1,...,7} 
			\draw[black] (\r,-0.05) -- (\r,0.05);
			\foreach \r in {1,...,7} 
			\draw[black] (-0.05,\r) -- (0.05,\r);
			\foreach \r in {1,...,8}
			\foreach \s in {1,...,8}
			\node[fill=none,draw=none] at (\r,\s) {.};
			\draw (1,3) node[fill=\modulegray]{};
			\draw (3,1) node[fill=\modulegray]{};
			\draw (4,4) node[fill=white]{};
			\draw (5,3) node[fill=\shiftgray]{};
			\draw (3,5) node[fill=\shiftgray]{};
			\node[draw=none,fill=none,label=\Large$x_1^2x_2^2M$] at (6.5,2.55) {.};
			\end{tikzpicture}}
	\end{minipage}
	\caption{Visualization of $M,x_1x_2M,$ and $x_1^2x_2^2M$ for $M$ from \Cref{example monomials}} 
	\label{figure monomials}
\end{figure}

For $r=1,$ the shift-dimension is additive. For $r>1,$ this does in general {\em not} hold true, as the following counterexamples for $\param=2$ and $v=\mathbbm{1}$ demonstrate.
\begin{example}[Revisiting \Cref{counterexampleadd}]
Let $M=\langle x_1  \rangle / \langle x_1 x_2^2 \rangle$ and $N = \langle x_2 \rangle / \langle x_1 ^2 x_2 \rangle.$ Then $x_1 x_2 (M\oplus N)$  is contained in the submodule $\langle (x_1 x_2,x_1 x_2)\rangle$ of $M\oplus N.$
	To see that, observe that $x_1 (x_1 x_2,x_1 x_2)=x_1 x_2(x_1 ,0)$ and $x_2(x_1 x_2,x_1 x_2)=x_1 x_2(0,x_2)$ in  $M \oplus N.$	
	It follows that $\dim_1(M\oplus N)=1 \neq 1+1= \dim_1(M)+\dim_1(N).$
\end{example}

\begin{example}
	Let $M_1=\langle x_1 ^2\rangle / \langle x_1 ^6,x_1 ^2x_2^4\rangle,$ $M_2=\langle x_1 x_2 \rangle / \langle x_1 ^5x_2,x_1 x_2^5\rangle ,$ and\linebreak$M_3=\langle x_2^2 \rangle / \langle x_1 ^4 x_2^2,x_2^6 \rangle.$ Then $\dim_3(M_i)=1$ for $i=1,2,3$ but $\dim_3(M_1\oplus M_2 \oplus M_3)=1,$ since the singleton $\{(x_1 ^3x_2^3,x_1 ^3x_2^3,x_1 ^3x_2^3)\}$ is a $3$-basis of the module $M_1\oplus M_2 \oplus M_3.$
\end{example}

We now present some cases for which additivity of the shift-dimension does hold true. They all are immediate consequences of statements presented in \Cref{section properties}. Here, we rephrase the statements for the discretized setting and omit the proofs.

\begin{lemma}
	If $F$ and $G$ are two free multigraded modules of finite rank, then $\dim_v(F\oplus G)=\dim_v (F)+\dim_v(G)$ for all $v\in \NN^r.$ \hfill \qed
\end{lemma}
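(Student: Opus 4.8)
The plan is to reduce the claim to the single fact that a finitely generated free multigraded $S$-module $F$ satisfies $\dim_v(F)=\beta_0(F)$ for every $v\in\NN^r$. Granting this, one simply regards $F\oplus G$ as a free module again, uses $\beta_0(F\oplus G)=\beta_0(F)+\beta_0(G)$, and concludes $\dim_v(F\oplus G)=\beta_0(F)+\beta_0(G)=\dim_v(F)+\dim_v(G)$. (Equivalently, one may just invoke the corresponding statement from \Cref{section properties}, whose proof transports verbatim along the monoid-ring dictionary of \Cref{section algebraic}; but the direct argument, which makes precise the one-line reasoning recorded there, is short enough to spell out.)

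To prove $\dim_v(F)=\beta_0(F)$: the inequality $\dim_v(F)\le\beta_0(F)$ is immediate, since a minimal set of homogeneous generators of $F$ already generates, hence $v$-generates, $F$. For the reverse inequality write $F=\bigoplus_{i=1}^{p}S(-c_i)$ with $c_i\in\NN^r$ and $p=\beta_0(F)$, and, invoking our standing convention on homogeneous $v$-bases, let $\{h_1,\dots,h_k\}$ be a homogeneous $v$-basis of $F$, so that $x_1^{v_1}\cdots x_r^{v_r}\cdot F\subseteq\langle h_1,\dots,h_k\rangle$. Pick a multidegree $b\in\NN^r$ dominating, coordinatewise, all of $c_1+v,\dots,c_p+v$ and all of $\deg(h_1),\dots,\deg(h_k)$.

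For such a $b$ the multiplication-by-$x_1^{v_1}\cdots x_r^{v_r}$ map $F_{b-v}\to F_b$ is surjective (in fact an isomorphism, as $F$ is torsion-free), so $\bigl(x_1^{v_1}\cdots x_r^{v_r}\cdot F\bigr)_b=F_b\cong\field^{\,p}$; thus the left-hand side of the containment has $\field$-dimension exactly $p$ in degree $b$. On the other hand $\langle h_1,\dots,h_k\rangle_b=\sum_{\ell=1}^{k}S_{b-\deg(h_\ell)}\cdot h_\ell$, and since $S$ is $\NN^r$-graded with each graded piece at most one-dimensional over $\field$, every summand $S_{b-\deg(h_\ell)}\cdot h_\ell$ is at most one-dimensional; hence $\dim_\field\langle h_1,\dots,h_k\rangle_b\le k$. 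Combining, $p\le\dim_\field\langle h_1,\dots,h_k\rangle_b\le k$, i.e.\ $\dim_v(F)=k\ge\beta_0(F)$, which closes the argument.

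There is no genuine obstacle here; the only point requiring attention is the multidegree bookkeeping — choosing $b$ large enough that $x_1^{v_1}\cdots x_r^{v_r}$ acts surjectively on $F$ in degree $b$ while each $h_\ell$ still contributes a genuine one-dimensional slice in that degree — together with the (already established) legitimacy of restricting to homogeneous $v$-bases.
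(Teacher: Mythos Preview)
Your proof is correct and follows the same approach as the paper. The paper's one-line argument---that in sufficiently large degree the elements of a $v$-basis must generate $\field^{\beta_0(F)+\beta_0(G)}$---is exactly the dimension count you carry out; your version simply packages it via the intermediate claim $\dim_v(F)=\beta_0(F)$ for free $F$ and then applies additivity of $\beta_0$, which is a clean but not materially different presentation.
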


\begin{proposition}
	Let $M=S/\langle x_1^{a_1}\cdots x_\param^{a_\param}\rangle $ and $F$ a free multigraded module of rank one. Then $\dim_v(M\oplus F)=\dim_v(M)+\dim_v(F)$ for all $v.$ \hfill \qed
\end{proposition}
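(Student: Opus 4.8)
The plan is to deduce this from the proposition in \Cref{section properties} stating that $\dim_v(M'\oplus F)=\dim_v(M')+1$ for every interval module $M'$ generated by a single element and every free module $F$ of rank one, combined with the computation $\dim_v(F)=1$.

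First I would check that $M=S/\langle x_1^{a_1}\cdots x_\param^{a_\param}\rangle$ meets the hypotheses of that proposition. Under the isomorphism between $\NN^\param$-graded $S$-modules and persistence modules on $\NN^\param$, the module $M$ is the quotient of the two monomial ideals $\langle 1\rangle=S$ and $\langle x_1^{a_1}\cdots x_\param^{a_\param}\rangle$, hence an interval module; moreover it is cyclic, generated by the class of $1\in S$ in degree $0$. So $M$ is an interval module generated by one element.

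Next I would record that $\dim_v(F)=1$ for every $v\in\NN^\param$, which is the content of the Free multigraded module example: writing $F\cong S(-b_1,\ldots,-b_\param)$, the submodule $x_1^{v_1}\cdots x_\param^{v_\param}F$ is cyclic, generated by $x_1^{b_1+v_1}\cdots x_\param^{b_\param+v_\param}$, and is nonzero because $S$ is a domain, so a single element $v$-generates $F$ while the empty set does not. Feeding both observations into the proposition gives $\dim_v(M\oplus F)=\dim_v(M)+1=\dim_v(M)+\dim_v(F)$, as claimed.

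The one point requiring care — and the main, though minor, obstacle — is that the cited proposition is phrased over $\Rnn^\param$, whereas here the ground monoid is $\NN^\param$. Its proof only uses that $F$ is torsion-free and that, whenever $\dim_v(M)\geq 1$, there is a homogeneous $m\in M$ with $v\ast m\neq 0$; both hold over $\NN^\param$, so the argument carries over verbatim. Alternatively, one may restrict the $\Rnn^\param$-statement to a sufficiently fine equidistant grid, as discussed in the introduction. Either way, no additional computation is needed.
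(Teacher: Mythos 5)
Your proposal is correct and takes essentially the same route as the paper: the paper presents this proposition as an immediate consequence of the earlier result that $\dim_v(M'\oplus F)=\dim_v(M')+1$ for an interval module $M'$ generated by one element and a rank-one free module $F$, and omits the proof, which is exactly the reduction you perform. Your added checks---that $S/\langle x_1^{a_1}\cdots x_\param^{a_\param}\rangle$ is a cyclic interval module, that $\dim_v(F)=1$, and that the argument transfers from $\Rnn^\param$ to $\NN^\param$---simply make explicit the details the paper leaves implicit.
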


\begin{proposition}
	Let $\dim_n(M)\leq 1$ and $F$ a free multigraded module of rank $k.$ Then $\dim_v (M\oplus F)=\dim_v(M)+k.$ \hfill \qed
\end{proposition}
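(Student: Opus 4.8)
The plan is to obtain this as the $\NN^\param$-graded reformulation of the proposition in \Cref{section properties} that treats the case $\dim_v(M)\le 1$ with a free module $F$ of rank $k$, once we record the single extra ingredient it needs. First I would observe that $\dim_v(F)=k$ for a free multigraded module $F\cong\bigoplus_{i=1}^{k}S(-a_i)$: the $k$ shifted generators $x_1^{v_1}\cdots x_\param^{v_\param}\gen_i$ lie in $\langle\gen_1,\dots,\gen_k\rangle$, so these $k$ elements $v$-generate $F$, while fewer cannot, since in every componentwise large degree $d$ a set of $v$-generators must span $F_d\cong\field^k$ over $\field$ (this is the $\param$-variable restatement of the free-module lemma of \Cref{section properties}). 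Granting $\dim_v(F)=k$, the asserted equality $\dim_v(M\oplus F)=\dim_v(M)+k$ is exactly the statement ``$\dim_v(M)\le 1$ and $F$ free of rank $k$ imply $\dim_v(M\oplus F)=\dim_v(M)+\dim_v(F)$'' transported across the isomorphism $\Mod^{\gr}_{\mathrm{f.p.}}(\field[\NN^\param])\cong\Tame(\NN^\param,\Vect_\field)$, under which $v\ast(\bullet)$ becomes multiplication by the monomial $x_1^{v_1}\cdots x_\param^{v_\param}$; the proof given there uses nothing specific to $\Rnn$ and applies verbatim.

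For completeness I would also spell the argument out directly in the discrete language. The bound $\dim_v(M\oplus F)\le\dim_v(M)+k$ is subadditivity together with $\dim_v(F)=k$. For the reverse bound, if $\dim_v(M)=0$ then the projection $M\oplus F\twoheadrightarrow F$ and the epimorphism lemma give $\dim_v(M\oplus F)\ge\dim_v(F)=k$, hence equality. If $\dim_v(M)=1$, I would suppose for contradiction that $\{(m_1,f_1),\dots,(m_k,f_k)\}$ is a $v$-basis of $M\oplus F$; projecting to $F$ shows that $f_1,\dots,f_k$ $v$-generate $F$ and hence form a $v$-basis of it, and then I would note that $f_1,\dots,f_k$ are $S$-linearly independent: since $S$ is an integral domain, multiplication by $x_1^{v_1}\cdots x_\param^{v_\param}$ is injective, so the submodule $\langle f_1,\dots,f_k\rangle$, which contains $(x_1^{v_1}\cdots x_\param^{v_\param})F$ and sits inside $F$, has generic rank $k$, forcing its $k$ generators to be linearly independent over $\mathrm{Frac}(S)$ and a fortiori over $S$. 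Picking a homogeneous $m\in M$ with $(x_1^{v_1}\cdots x_\param^{v_\param})m\ne 0$ (possible since $\dim_v(M)\ge 1$), the membership $\bigl((x_1^{v_1}\cdots x_\param^{v_\param})m,\,0\bigr)\in(x_1^{v_1}\cdots x_\param^{v_\param})(M\oplus F)\subseteq\langle(m_i,f_i)\rangle$ yields $r_1,\dots,r_k\in S$ with $\sum_i r_if_i=0$ and $\sum_i r_im_i=(x_1^{v_1}\cdots x_\param^{v_\param})m$; linear independence of the $f_i$ then forces every $r_i=0$ and hence $(x_1^{v_1}\cdots x_\param^{v_\param})m=0$, a contradiction. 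Thus $\dim_v(M\oplus F)\ge k+1=\dim_v(M)+k$, which with the upper bound gives equality.

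The one step that is not pure bookkeeping is the $S$-linear independence of a $k$-element $v$-basis of a rank-$k$ free module, and I expect it to be the main obstacle: it is the only place where one genuinely uses that $S$ is an integral domain (equivalently, that multiplication by $x_1^{v_1}\cdots x_\param^{v_\param}$ is injective on a free module), and, notably, this is precisely the feature that fails over $\field[\Rnn^\param]$-type nondiscrete settings. Everything else — subadditivity, the epimorphism lemma, and $\dim_v(F)=k$ — is already available from \Cref{section properties}.
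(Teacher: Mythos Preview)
Your proposal is correct and matches the paper's approach exactly: the paper omits the proof entirely (marking the proposition with a bare \qed), having announced just before that the Section~4 statements ``are immediate consequences of statements presented in \Cref{section properties}''; your first paragraph is precisely this transport, and your second paragraph reproduces the Section-2 argument with extra care about the linear-independence step.

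One remark in your third paragraph is off, however. You say that injectivity of multiplication by a monomial on a free module ``is precisely the feature that fails over $\field[\Rnn^\param]$-type nondiscrete settings''. It does not fail there: $\field[\Rnn^\param]$ is an integral domain (it is the monoid algebra over a field of the cancellative, torsion-free commutative monoid $(\Rnn^\param,+)$), so multiplication by any nonzero element---in particular any monomial $v$---is injective on free modules. Indeed the paper proves the source proposition in exactly that $\Rnn^\param$ setting, with the same linear-independence step. This misstatement does not affect the validity of your proof, only the surrounding commentary.
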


\begin{theorem}
	Let $0 \longrightarrow M \stackrel{\varphi}{\longrightarrow} L \stackrel{\psi}{\longrightarrow} N \to 0$ be a short exact sequence of $\NN^\param$-graded \mbox{$S$-modules}. Then for all $v,w\in \NN^r,$ the following two inequalities hold:
	\begin{enumerate}[(i)]
		\item  $\dim_{v+w}(L)\leq \dim_v(M)+\dim_w(N),$
		\item $\dim_v(L) \leq \dim_v (N) + \beta_0(M).$  \hfill \qed
	\end{enumerate} 
\end{theorem}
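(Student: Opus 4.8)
The plan is to transcribe, almost verbatim, the proof of the analogous theorem for tame persistence modules proved earlier in the paper, under the dictionary that the monoid action $v\ast(\bullet)$ becomes multiplication by the monomial $x^v\coloneqq x_1^{v_1}\cdots x_\param^{v_\param}\in S$, and to observe that nothing in that argument used anything beyond: surjectivity of $\psi$ to produce lifts, the definition of a $v$-basis as a set whose generated submodule contains $x^vM$, and finiteness of the relevant generating sets (guaranteed here since $S$ is Noetherian and all modules are finitely generated).

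For (i), first I would fix a $w$-basis $\{n_1,\dots,n_a\}$ of $N$ with $a=\dim_w(N)$ and, using that $\psi$ is onto, pick lifts $l_i\in L$ with $\psi(l_i)=n_i$; also fix homogeneous generators $g_1,\dots,g_t$ of $L$. Since $x^w\psi(g_i)\in x^wN\subseteq\langle n_1,\dots,n_a\rangle$, there are $\alpha_{ij}\in S$ with $x^wg_i-\sum_j\alpha_{ij}l_j\in\ker\psi=\image\varphi$, say equal to $\varphi(h_i)$. Then I would fix a $v$-basis $\{m_1,\dots,m_b\}$ of $M$ with $b=\dim_v(M)$, write $x^vh_i\in x^vM\subseteq\langle m_1,\dots,m_b\rangle$ as $\sum_k\gamma_{ik}m_k$, apply $\varphi$, and rearrange to obtain
\[
x^{v+w}g_i=\sum_j(x^v\alpha_{ij})\,l_j+\sum_k\gamma_{ik}\,\varphi(m_k).
\]
Since the $g_i$ generate $L$, this gives $x^{v+w}L\subseteq\langle l_1,\dots,l_a,\varphi(m_1),\dots,\varphi(m_b)\rangle$, so $\{l_i\}\cup\{\varphi(m_k)\}$ is a $(v+w)$-generating set of cardinality $a+b$, whence $\dim_{v+w}(L)\le\dim_w(N)+\dim_v(M)$.

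For (ii), I would set $a=\dim_v(N)$, fix a $v$-basis $\{n_1,\dots,n_a\}$ of $N$, lift it to $l_1,\dots,l_a\in L$, and let $F$ be the free module of rank $a$ with homogeneous generators in the degrees of the $n_i$; the lifts together with $\varphi$ assemble into a map $\rho\colon M\oplus F\to L$. The key point is $\image\rho=\ker\psi+\langle l_1,\dots,l_a\rangle$, so $\coker\rho\cong N/\langle n_1,\dots,n_a\rangle$, which is annihilated by $x^v$ because the $n_i$ form a $v$-basis; hence $x^vL\subseteq\image\rho$. As $\image\rho$ is generated by the images of any generating set of $M\oplus F$, and $\beta_0(M\oplus F)=\beta_0(M)+\beta_0(F)=\beta_0(M)+a$, we conclude $\dim_v(L)\le\beta_0(M)+\dim_v(N)$.

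I do not expect a genuine obstacle: the content is essentially a restatement, and the only step meriting a moment's care is the identification $\coker\rho\cong N/\langle n_1,\dots,n_a\rangle$ in (ii), immediate from $\image\varphi=\ker\psi$ and $\psi(l_i)=n_i$ via the short exact sequence. The mild subtlety that an element such as $x^wg_i-\sum_j\alpha_{ij}l_j$ need not be homogeneous is harmless, since membership of $x^vh_i$ in $\langle m_1,\dots,m_b\rangle$ does not require homogeneity of $h_i$; to stay within homogeneous data one simply splits into graded components. This is presumably why the authors rephrase the statement and omit the proof.
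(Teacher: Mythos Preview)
Your proposal is correct and matches the paper's approach exactly: the paper omits the proof of this discretized version, pointing back to the earlier theorem for tame persistence modules, and your argument is a faithful transcription of that proof under the dictionary $v\ast(\bullet)\leadsto x^v\cdot(\bullet)$. Your part~(ii), with the explicit identification $\coker\rho\cong N/\langle n_1,\dots,n_a\rangle$, is in fact a slightly more detailed rendering of the paper's line ``$L/(M\oplus F)\cong N/F$,'' but the content is identical.
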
 
\noindent If $\dim_v(M) = \beta_0 (M)$ or $\beta_0(M)=1,$ it follows that $\dim_{v}(L)\leq \dim_v(M)+\dim_v(N).$ 

\section{Outlook to future work}\label{section outlook}
Our article outlines several pathways for future research. 
On the one hand, it would be intriguing to stabilize discrete invariants other than the zeroth total multigraded Betti number, such as the rank of the highest syzygy module. The following question immediately arises: {\em Is the rank of the highest syzygy module  ``naturally'' linked to a noise system---similar to the zeroth total multigraded Betti number being linked to the noise system in the direction of a vector in the sense of \Cref{def simple}?} A further challenging question is whether the stabilized Euler characteristic is the alternating sum of the stabilization of the ranks.

On the other hand, it would be worthwhile to investigate the stabilization of $\beta_0$ with respect to pseudometrics other than the one arising from the standard contour. For that, a first step is the description of the shift of the module. As described, for the standard contour in the direction of a vector, this is the module multiplied by the corresponding monomial. For other contours, there is no such algebraic description~yet.

Another interesting direction would be to investigate the shift-dimension for (not necessarily graded) modules over arbitrary rings and to develop a geometric intuition.

We believe that the shift-dimension allows for the extraction of finer and new information of data. This makes it valuable for applications in the medical and life sciences, among others.
In order to compute the shift-dimension of multipersistence modules of actual data arising in the sciences efficiently, 
one needs to extend the linear-time algorithm presented in this article to modules other than interval modules, such as direct sums of interval modules. Those indeed arise from data~\cite{escolar2016persistence} and approximate persistence modules in the sense of~\cite{asashiba2018interval},~\cite{botnan2022signed}, and~\cite{chacholski2022effective}. 
Using our algorithm, evaluating the shift-dimension summand-wise is already possible for this class of modules. It would be worthwhile to investigate if this additive version is stable, as was suggested to us by Ulrich Bauer. Furthermore, it would be interesting to find bounds for $\err_{v,p}$ and to determine subclasses of modules for which this error is zero.

In the one-parameter case, two persistence modules $M$ and $N$ are known to be isomorphic if and only if  $\widehat{\beta_0}_C(M)$ and $ \widehat{\beta_0}_C(N)$ coincide for each persistence contour~$C.$ At present, it is not known if that statement holds true for the multiparameter case as well. As a first step in this direction, one might  investigate whether one can distinguish non-isomorphic multigraded modules in the following sense: given two non-isomorphic modules $M,N,$ can one find a noise system $(\mathcal{V}_{\varepsilon})_{\varepsilon\in \mathbb{R}_{\geq 0}}$ such that $M\in \mathcal{V}_0$ but $N\notin \mathcal{V}_0$?


\newpage

\bigskip\bigskip

\end{document}